\numberwithin{equation}{section}
\newtheorem{Theorem}{Theorem}[section]
\newtheorem{Corollary}[Theorem]{Corollary}
\newtheorem{Lemma}[Theorem]{Lemma}
\newtheorem{Proposition}[Theorem]{Proposition}
 { \theoremstyle{definition}
\newtheorem{Definition}[Theorem]{Definition}
\newtheorem{Example}[Theorem]{Example}
\newtheorem{Remark}[Theorem]{Remark} }
\DeclareMathOperator\C{\mathbb C}
\DeclareMathOperator\Z{\mathbb Z}
\begin{document}

\allowdisplaybreaks

\newcommand{\arXivNumber}{1702.08060}

\renewcommand{\thefootnote}{}

\renewcommand{\PaperNumber}{132}

\FirstPageHeading

\ShortArticleName{Elliptic Dynamical Quantum Groups and Equivariant Elliptic Cohomology}

\ArticleName{Elliptic Dynamical Quantum Groups\\ and Equivariant Elliptic Cohomology\footnote{This paper is a~contribution to the Special Issue on Elliptic Hypergeometric Functions and Their Applications. The full collection is available at \href{https://www.emis.de/journals/SIGMA/EHF2017.html}{https://www.emis.de/journals/SIGMA/EHF2017.html}}}

\Author{Giovanni FELDER~$^\dag$, Rich\'ard RIM\'ANYI~$^\ddag$ and Alexander VARCHENKO~$^\ddag$}

\AuthorNameForHeading{G.~Felder, R.~Rim\'anyi and A.~Varchenko}

\Address{$^\dag$~Department of Mathematics, ETH Z\"urich, 8092 Z\"urich, Switzerland}
\EmailD{\href{mailto:felder@math.ethz.ch}{felder@math.ethz.ch}}

\Address{$^\ddag$~Department of Mathematics, University of North Carolina at Chapel Hill,\\
\hphantom{$^\ddag$}~Chapel Hill, NC 27599-3250, USA}
\EmailD{\href{mailto:rimanyi@email.unc.edu}{rimanyi@email.unc.edu}, \href{mailto:anv@email.unc.edu}{anv@email.unc.edu}}

\ArticleDates{Received April 30, 2018, in final form December 12, 2018; Published online December 21, 2018}

\Abstract{We define an elliptic version of the stable envelope of Maulik and Okounkov for the equivariant elliptic cohomology of cotangent bundles of Grassmannians. It is a version of the construction proposed by Aganagic and Okounkov and is based on weight functions and shuffle products. We construct an action of the dynamical elliptic quantum group associated with $\mathfrak{gl}_2$ on the equivariant elliptic cohomology of the union of cotangent bundles of Grassmannians. The generators of the elliptic quantum groups act as difference operators on sections of admissible bundles, a notion introduced in this paper.}

\Keywords{elliptic cohomology; elliptic quantum group; elliptic stable envelope}

\Classification{17B37; 55N34; 32C35; 55R40}

\renewcommand{\thefootnote}{\arabic{footnote}}
\setcounter{footnote}{0}

\section{Introduction}\label{s-1}
Maulik and Okounkov \cite{MO} have set up a program to realize representation theory of quantum groups of various kinds on torus equivariant (generalized) cohomology of Nakajima varieties. A~central role is played by the stable envelopes, which are maps from the equivariant cohomology of the fixed point set of the torus action to the equivariant cohomology of the variety. Stable envelopes depend on the choice of a chamber (a connected component of the complement of an arrangement of real hyperplanes) and different chambers are related by $R$-matrices of the corresponding quantum groups. The basic example of a Nakajima variety is the cotangent bundle of the Grassmannian $\mathrm{Gr}(k,n)$ of $k$-planes in $\mathbb C^n$. The torus is $T=U(1)^{n}\times U(1)$, with~$U(1)^n$ acting by diagonal matrices on $\mathbb C^n$ and $U(1)$ acting by multiplication on the cotangent spaces. Then the Yangian $Y(\mathfrak{gl}_2)$ acts on $H_T(\sqcup_{k=0}^nT^*\mathrm{Gr}(k,n))$ and the action of generators is described geometrically by correspondences. It turns out that this representation is isomorphic to the tensor products of $n$ evaluation vector representations with the equivariant parameters of~$U(1)^n$ as evaluation points and the equivariant parameter of $U(1)$ as the deformation parameter of the quantum group. The choice of a chamber is the same as the choice of an ordering of the factors in the tensor product. The same holds for the affine quantum universal enveloping algebra $U_q(\widehat {\mathfrak{gl}}_2)$ if we replace equivariant cohomology by equivariant $K$-theory. As was shown in \cite{GRTV, RTV}, the stable envelopes, which realize the isomorphisms, are given by the {\em weight functions}, which originally appeared in the theory of integral representations of solutions of the Knizhnik--Zamolodchikov equation, see \cite{TV1,TV2}. Their special values form transition matrices from the tensor basis to a~basis of eigenvectors for the Gelfand--Zetlin commutative subalgebra.

The recent preprint \cite{AO} of Aganagic and Okounkov suggests that the same picture should hold for equivariant elliptic cohomology and elliptic dynamical quantum groups and this is the subject of this paper. The authors of~\cite{AO} define an elliptic version of the stable envelopes and show, in the example of the cotangent bundle of a~projective space, stable envelopes corresponding to different orderings are related to the fundamental elliptic dynamical $R$-matrices of the elliptic dynamical quantum group $E_{\tau,y}(\mathfrak{gl}_2)$. Our paper is an attempt to understand the elliptic stable envelope in the case of cotangent bundles of Grassmannians. In particular we give a precise description of the space in which the stable envelope takes its values. Our construction of stable envelopes is based on elliptic weight functions. In Appendix~\ref{sec:appendix} we also give a geometric characterization, in terms of pull-backs to the cohomology of fixed points, in the spirit of~\cite{MO}.

While our work is inspired by \cite{AO}, we do not know whether the two constructions are equivalent or not. The interesting project of understanding the exact relation between our construction and the construction of Aganagic--Okounkov requires more work.

Compared to equivariant cohomology and $K$-theory, two new features arise in the elliptic case. The first new feature is the occurrence of an additional variable, the dynamical parameter, in the elliptic quantum group. It also appears in \cite{AO}, under the name of K\"ahler parameter, in an extended version of the elliptic cohomology of Nakajima varieties. The second is a general feature of elliptic cohomology: while $T$-equivariant cohomology and $K$-theory are contravariant functors from $T$-spaces to supercommutative algebras, and can thus be thought of as covariant functors to affine superschemes,\footnote{The reader may safely ignore the super prefixes, as we only consider spaces with trivial odd cohomology, for which one has strictly commutative algebras.} in the elliptic case only the description as covariant functor to (typically non-affine) superschemes generalizes straightforwardly.

Our main result is a construction of an action of the elliptic quantum group associated with~$\mathfrak{gl}_2$ on the extended equivariant elliptic cohomology scheme $\hat E_T(X_{n})$ of the union $X_n=\sqcup_{k=0}^n X_{k,n}$ of cotangent bundles $X_{k,n}=T^*\mathrm{Gr}(k,n)$ of Grassmannians. The meaning of this is that we define a representation of the operator algebra of the quantum group by difference operators acting on sections of a class of line bundles on the extended elliptic cohomology scheme, which we call admissible bundles: up to a twist by a fixed line bundle, admissible bundles on $\hat E_T(X_{k,n})$ are pull-backs of bundles on $\hat E_{U(n)\times U(1)}(\mathrm{pt})$ (by functoriality there is a~map corresponding to the map to a point and the inclusion of the Cartan subalgebra $T\to U(n)\times U(1)$). The claim is that there is a representation of the elliptic quantum group by operators mapping sections of admissible bundles to sections of admissible bundles.

This paper may be considered as an elliptic version of the paper \cite{RV2} where analogous constructions are developed for the rational dynamical quantum group $E_y(\mathfrak{gl}_2)$.

\subsection*{Notation} For a positive integer $n$, we set $[n]=\{1,\dots,n\}$. It $K$ is a subset of $[n]$ we denote by $|K|$ its cardinality and by $\bar K$ its complement. Throughout the paper we fix $\tau$ in the upper half plane and consider the complex elliptic curve $E=\mathbb C/(\mathbb Z+\tau \mathbb Z)$. The odd Jacobi theta function
\begin{gather}\label{e-theta}
 \theta(z)=\frac{\sin \pi z}{\pi}\prod_{j=1}^\infty\frac{\big(1-q^j{\rm e}^{2\pi{\rm i}z}\big)\big(1-q^j{\rm e}^{-2\pi{\rm i}z}\big)}{\big(1-q^j\big)^2},\qquad q={\rm e}^{2\pi{\rm i}\tau},
\end{gather}
is normalized to have derivative 1 at $0$. It is an entire odd function with simple zeros at $\mathbb Z+\tau\mathbb Z$, obeying $\theta(z+1)=-\theta(z)$ and
\begin{gather*}
 \theta(z+\tau)=-{\rm e}^{-\pi {\rm i}\tau-2\pi{\rm i}z}\theta(z).
\end{gather*}

\section[Dynamical $R$-matrices and elliptic quantum groups]{Dynamical $\boldsymbol{R}$-matrices and elliptic quantum groups}\label{s-2}
\subsection{Dynamical Yang--Baxter equation}\label{ss-2.1}
Let $\mathfrak h$ be a complex abelian Lie algebra and $V$ an $\mathfrak h$-module with a weight decomposition $V=\oplus_{\mu\in\mathfrak h^*}V_\mu$ and finite dimensional weight spaces $V_\mu$. A dynamical $R$-matrix with values in $\operatorname{End}_{\mathfrak h}(V\otimes V)$ is a meromorphic function $(z,y,\lambda)\mapsto R(z,y,\lambda)\in\operatorname{End}_{\mathfrak h}(V\otimes V)$ of the spectral parameter $z\in\mathbb C$, the deformation parameter $y\in\mathbb C$ and the dynamical parameter $\lambda\in\mathfrak h^*$, obeying the dynamical Yang--Baxter equation
\begin{gather}
 R\big(z,y,\lambda-y h^{(3)}\big)^{(12)}R(z+w,y,\lambda)^{(13)} R\big(w,y,\lambda-y h^{(1)}\big)^{(23)} \notag\\
 \qquad {}= R(w,y,\lambda)^{(23)}R\big(z+w,y,\lambda-y h^{(2)}\big)^{(13)} R\big(z,y,\lambda-y h^{(3)}\big)^{(12)}\label{e-YBE}
\end{gather}
in $\operatorname{End}(V\otimes V\otimes V)$ and the inversion relation
\begin{gather}\label{e-inversion}
 R(z,y,\lambda)^{(12)}R(-z,y,\lambda)^{(21)}=\mathrm{Id}
\end{gather}
in $\operatorname{End}(V\otimes V)$. The superscripts indicate the factors in the tensor product on which the endomorphisms act non-trivially and $h$ is the element in $\mathfrak h^*\otimes \operatorname{End}(V)$ defined by the action of~$\mathfrak h$: for example $R\big(z,y,\lambda-y h^{(3)}\big)^{(12)}$ acts as $R(z,y,\lambda-y\mu_3)\otimes \mathrm{Id}$ on $V_{\mu_1}\otimes V_{\mu_2}\otimes V_{\mu_3}$.
\begin{Example}[\cite{Felder}]\label{example-1} Let $\mathfrak h\simeq\mathbb C^N$ be the Cartan subalgebra of diagonal matrices in $\mathfrak{gl}_N(\mathbb C)$. Let $V=\oplus_{i=1}^N V_{\epsilon_i}$ the vector representation with weights $\epsilon_i(x)=x_i$, $x\in\mathfrak h$ and one-dimensional weight spaces. Let $E_{ij}$ be the $N\times N$ matrix with entry $1$ at $(i,j)$ and $0$ elsewhere. The elliptic dynamical $R$-matrix
 for $\mathfrak{gl}_N$ is\footnote{We use the convention of~\cite{FelderVarchenko}. This $R$-matrix is obtained from the one introduced in \cite{Felder} by substituting $y=-2\eta$ and replacing $z$ by $-z$.}
 \begin{gather*}
 R(z,y,\lambda) =\sum_{i=1}^NE_{ii}\otimes E_{ii}+\sum_{i\neq j}\alpha(z,y,\lambda_i-\lambda_j) E_{ii}\otimes E_{jj} +\sum_{i\neq j}\beta(z,y,\lambda_i-\lambda_j) E_{ij}\otimes E_{ji},
 \end{gather*}
 where
 \begin{gather*}
 \alpha(z,y,\lambda)= \frac{\theta(z)\theta(\lambda+y)}{\theta(z-y)\theta(\lambda)}, \qquad \beta(z,y,\lambda)= -\frac{\theta(z+\lambda)\theta(y)}{\theta(z-y)\theta(\lambda)}.
 \end{gather*}
 It is a deformation of the trivial $R$-matrix $R(z,0,\lambda)=\mathrm{id}_{V\otimes V}$.
\end{Example}
A dynamical $R$-matrix defines a representation of the symmetric group $S_n$ on the space of meromorphic functions of $(z_1,\dots,z_n,y,\lambda)\in\mathbb C^n\times \mathbb C\times\mathfrak h^*$ with values in $V^{\otimes n}$. The transposition $s_i=(i,i+1)$, $i=1,\dots,n-1$, acts as
\begin{gather}\label{e-Si}
f\mapsto S_i(z,y,\lambda)s_i^*f,\qquad S_i(z,y,\lambda)=R(z_i-z_{i+1},y,\lambda-y\sum_{j={i+2}}^{n}h^{(j)})^{(i,i+1)} P^{(i,i+1)},
\end{gather}
where $P\in\operatorname{End}(V\otimes V)$ is the flip $u\otimes v\mapsto v\otimes u$ and $s_i^*$ acts on functions by permutation of $z_i$ with $z_{i+1}$.

To a dynamical $R$-matrix $R$ there corresponds a category ``of representations of the dynamical quantum group associated with $R$''. Fix $y\in\mathbb C$ and let $\mathbb K$ be the field of meromorphic functions of $\lambda\in\mathfrak h^*$ and for $\mu\in\mathfrak h^*$ let $\tau_\mu^*\in\operatorname{Aut}(\mathbb K)$ be the automorphism $\tau_\mu^*f(\lambda)=f(\lambda+y\mu)$. An object of this category is a $\mathbb K$-vector space $W=\oplus_{\mu\in\mathfrak h^*}W_\mu$, which is a semisimple module over $\mathfrak h$, with finite dimensional weight spaces $W_\mu$, together with an endomorphisms $L(w)\in\operatorname{End}_{\mathfrak h}(V\otimes W)$, depending on $w\in U\subset\mathbb C$ for some open dense set $U$, such that
\begin{enumerate}\itemsep=0pt
\item[(i)] $L(w)u\otimes fv=(\mathrm{id}\otimes\tau^*_{-\mu}f) L(w)u\otimes v$, $f\in \mathbb K$, $u\in V_\mu,v\in W$.
\item[(ii)] $L$ obeys the {\it RLL} relations:
 \begin{gather*}
 R\big(w_1-w_2,y,\lambda-y h^{(3)}\big)^{(12)}L(w_1)^{(13)} L(w_2)^{(23)} \\
 \qquad {}= L(w_2)^{(23)}L(w_1)^{(13)} R(w_1-w_2,y,\lambda)^{(12)}.
 \end{gather*}
\end{enumerate}
Morphisms $(W_1,L_{W_1})\to (W_2,L_{W_2})$ are $\mathbb K$-linear maps $\varphi\colon W_1\to W_2$ of $\mathfrak h$-modules, commuting with the action of the generators, in the sense that $L_{W_2}(w)\,\mathrm{id}_V\otimes \varphi=\mathrm{id}_V\otimes \varphi\, L_{W_1}(w)$ for all $w$ in the domain of definition. The dynamical quantum group itself may be defined as generated by Laurent coefficients of matrix elements of $L(w)$ subject to the {\em RLL} relations, see \cite{Konno} for a recent approach in the case of elliptic dynamical quantum groups and for the relations with other definitions of elliptic quantum groups.

The basic example of a representation is the vector evaluation representation $V(z)$ with evaluation point $z\in\mathbb C$. The vector representation has $W=V\otimes_{\mathbb C} \mathbb K$ and
\begin{gather*}
 L(w)v\otimes u=R(w-z,y,\lambda)v\otimes\tau_{-\mu}^* u,\qquad v\in V_\mu,\qquad u\in W.
\end{gather*}
Here $\tau^*_{-\mu} (v\otimes f)=v\otimes \tau_{-\mu}^*f$ for $v\in V$ and
$f\in\mathbb K$, and $R$ acts as a multiplication operator.

More generally we have the tensor product of evaluation
representations $V(z_1)\otimes\cdots\otimes V(z_n)$ with
$ W=V^{\otimes n}\otimes \mathbb K, $ and, by numbering the factors of
$V\otimes V^{\otimes n}$ by $0,1,\dots,n$,
\begin{gather}
 L(w)v\otimes u =R\left(w-z_1,y,\lambda-y\sum_{i=2}^nh^{(i)}\right)^{(01)} R\left(w-z_2,y,\lambda-y \sum_{i=3}^nh^{(i)}\right)^{(02)}\cdots\nonumber\\
\hphantom{L(w)v\otimes u =}{}\times R(w-z_n,y,\lambda)^{(0,n)} v\otimes\tau_{-\mu}^* u, \qquad v\in V_\mu,\qquad u\in W.\label{e-tensor}
\end{gather}
For generic $z_1,\dots, z_n$ the tensor products does not essentially depend on the ordering of the factors: the operators $S_i$ defined above are isomorphisms of representations
\begin{gather*}
 V(z_1)\otimes \cdots \otimes V(z_i)\otimes V(z_{i+1})\otimes\cdots\otimes V(z_n) \to V(z_1)\otimes \cdots \\
 \qquad{}\otimes V(z_{i+1})\otimes V(z_i)\otimes\cdots\otimes V(z_n).
\end{gather*}
\begin{Remark} It is convenient to consider $L$-operators $L(w)$, such as \eqref{e-tensor}, which are meromorphic functions of $w$ and are thus only defined for $w$ in an open dense set. But one may prefer to consider
only representations with $L(w)$ defined for all $w\in\mathbb C$. This may be obtained for the representation given by \eqref{e-tensor} by replacing $L(w)$ by the product of $L(w)$ with $\prod\limits_{a=1}^n\theta(w-z_a+y)$.
\end{Remark}
\subsection{Duality and gauge transformations}\label{ss-2.2}
Suppose that $R(z,y,\lambda)$ is a dynamical $R$-matrix with $\mathfrak h$-module $V$. Let $V^\vee=\oplus_\mu (V^\vee)_\mu$ with weight space $(V^\vee)_\mu$ the dual space to $V_\mu$. Then $R^\vee(z,y,\lambda)=\big(R(z,y,\lambda)^{-1}\big)^*$, the dual map to $R(z,y,\lambda)^{-1}$, is a dynamical $R$-matrix with values in $\operatorname{End}_{\mathfrak h}(V^\vee\otimes V^\vee)$. It is called the dual $R$-matrix to $R$.

Another way to get new $R$-matrices out of old is by a gauge transformation. Let $\psi_V(\lambda)$ be a meromorphic function on $\mathbb C\times \mathfrak h^*$ with values in $\operatorname{Aut}_{\mathfrak h}(V)$. Let
$\psi_{V\otimes V}(\lambda)=\psi_V\big(\lambda-y h^{(2)}\big)^{(1)}\psi_V(\lambda)^{(2)}\in\operatorname{End}_{\mathfrak h}(V\otimes V)$. Then
\begin{gather*}
 R_\psi(z,y,\lambda)=\psi_{V\otimes V}(\lambda)^{-1}R(z,y,\lambda)\psi_{V\otimes V}(\lambda)^{(21)}
\end{gather*}
is another dynamical $R$-matrix. The corresponding representations of the symmetric group are related by the isomorphism
\begin{gather*} \psi_{V^{\otimes n}}(\lambda)=\prod_{i=1}^n\psi_{V}\bigg(\lambda-y\sum_{j=i+1}^nh^{(j)}\bigg)^{(i)}.\end{gather*}

\subsection[The elliptic dynamical quantum group $E_{\tau,y}(\mathfrak{gl}_2)$]{The elliptic dynamical quantum group $\boldsymbol{E_{\tau,y}(\mathfrak{gl}_2)}$}\label{ss-2.3}

In this paper, we focus on the dynamical quantum group $E_{\tau,y}(\mathfrak{gl}_2)$. The corresponding $R$-matrix is the case $N=2$ of Example \ref{example-1}. With respect to the basis $v_1\otimes v_1$, $v_1\otimes v_2$, $v_2\otimes v_1$, $v_2\otimes v_2$,
\begin{gather*}
R(z,y,\lambda)= \left(
 \begin{matrix}
 1 & 0 & 0 & 0\\
 0 & \alpha(z,y,\lambda) & \beta(z,y,\lambda) & 0\\
 0 & \beta(z,y,-\lambda) & \alpha(z,y,-\lambda) & 0\\
 0 & 0 & 0& 1
 \end{matrix} \right),
\end{gather*}
where $\lambda=\lambda_1-\lambda_2$. Since $R$ depends only on the difference $\lambda_1-\lambda_2$ it is convenient to replace~$\mathfrak h$ by the 1-dimensional subspace $\mathbb C$ spanned by $h=\mathrm{diag}(1,-1)$. Then, under the identification $\mathfrak h\cong \mathbb C$ via the basis $h$, $v_1$ has weight 1 and $v_2$ has weight $-1$. Let $(W,L)$ be a representation of $E_{\tau,y}(\mathfrak {gl}_2)$ and write $L(w)=\sum\limits_{i,j=1}^2 E_{ij}\otimes L_{ij}(w)$. Then $L_{ij}(w)$ maps $W_\mu$ to $W_{\mu+2(i-j)}$ and for $f(\lambda)\in \mathbb K$, $L_{i2}(w)f(\lambda) =f(\lambda+y)L_{i2}(w)$ and $L_{i1}(w)f(\lambda) =f(\lambda-y)L_{i1}(w)$.
\begin{Example}[the vector representation $V(z)$] \label{example-2}Let $V=\mathbb C^2$ with basis $v_1$, $v_2$, then
 \begin{gather*}
 L_{11}(w)v_1=v_1,\qquad L_{22}(w)v_2=v_2,\\
 L_{11}(w)v_2= \frac {\theta(w-z)\theta(\lambda+y)} {\theta(w-z-y)\theta(\lambda)} v_2,\qquad
 L_{22}(w)v_1= \frac {\theta(w-z)\theta(\lambda-y)} {\theta(w-z-y)\theta(\lambda)} v_2,\\
 L_{12}(w)v_1 =-\frac{\theta(\lambda+w-z)\theta(y)} {\theta(w-z-y)\theta(\lambda)} v_2, \qquad
L_{21}(w)v_2=-\frac{\theta(\lambda-w+z)\theta(y)} {\theta(w-z-y)\theta(\lambda)} v_1,
 \end{gather*}
and the action on other basis vectors is 0.
\end{Example}

\subsection{The Gelfand--Zetlin subalgebra}
Let $W$ be a representation of the elliptic dynamical quantum group $E_{\tau,y}(\mathfrak{gl}_2)$. Then $L_{22}(w)$, $w\in\mathbb C$ and the quantum determinant \cite{FelderVarchenko}
\begin{gather}\label{e-determinant1}
 \Delta(w)=\frac{\theta(\lambda-y h)}{\theta(\lambda)} (L_{11}(w+y)L_{22}(w)-L_{21}(w+y)L_{12}(w))
\end{gather}
generate a commutative subalgebra of $\operatorname{End}_{\mathfrak h}(W)$. It is called the Gelfand--Zetlin subalgebra.

\section{Shuffle products and weight functions}\label{s-3}
Weight functions are special bases of spaces of sections of line bundles on symmetric powers of elliptic curves. They appear in the theory of hypergeometric integral representation of Knizhnik--Zamolodchikov equations. In~\cite{FelderTarasovVarchenko1997} they were characterized as tensor product bases of a space of function for a suitable notion of tensor products. In this approach the $R$-matrices for highest weight representations of elliptic quantum groups arise as matrices relating bases obtained from taking different orderings of factors in the tensor product. We review and extend the construction of~\cite{FelderTarasovVarchenko1997} in the special case of products of vector representations.

\subsection{Spaces of theta functions}\label{ss-3.1}
\begin{Definition}\label{def-theta-} Let $z\in\mathbb C^n,y\in\mathbb C,\lambda\in\mathbb C$ and define $\Theta^-_k(z,y,\lambda)$ to be the space of entire holomorphic functions $f(t_1,\dots,t_k)$ of $k$ variables such that
 \begin{enumerate}\itemsep=0pt
 \item For all permutations $\sigma\in S_k$, $f(t_{\sigma(1)},\dots,t_{\sigma(k)})=f(t_1,\dots,t_k)$.
 \item For all $r,s\in\mathbb Z$, the meromorphic function
 \begin{gather*}
 g(t_1,\dots,t_k)=\frac{f(t_1,\dots,t_k)}{\prod\limits_{j=1}^k\prod\limits_{a=1}^n \theta(t_j-z_a)}
 \end{gather*}
 obeys
 \begin{gather*}
 g(t_1,\dots, t_i+r+s\tau,\dots,t_k)={\rm e}^{2\pi {\rm i}s(\lambda-ky)}g(t_1,\dots,t_i,\dots,t_k).
 \end{gather*}
 \end{enumerate}
\end{Definition}
\begin{Definition}\label{def-theta+} Let $z\in\mathbb C^n$, $y\in\mathbb C$, $\lambda\in\mathbb C$ and define $\Theta^+_k(z,y,\lambda)$ to be the space of entire holomorphic functions $f(t_1,\dots,t_k)$ of $k$ variables such that
 \begin{enumerate}\itemsep=0pt
 \item For all permutations $\sigma\in S_k$, $f(t_{\sigma(1)},\dots,t_{\sigma(k)})=f(t_1,\dots,t_k)$.
 \item For all $r,s\in\mathbb Z$, the meromorphic function
 \begin{gather*}
 g(t_1,\dots,t_k)=\frac{f(t_1,\dots,t_k)}{\prod\limits_{j=1}^k\prod\limits_{a=1}^n \theta(t_j-z_a+y)},
 \end{gather*}
 obeys
 \begin{gather*}
 g(t_1,\dots, t_i+r+s\tau,\dots,t_k)={\rm e}^{-2\pi{\rm i}s(\lambda-ky)}g(t_1,\dots,t_i,\dots,t_k).
 \end{gather*}
 \end{enumerate}
\end{Definition}
\begin{Remark}\label{remark-1} These spaces are spaces of symmetric theta functions of degree $n$ in $k$ variables and have dimension ${n+k-1 \choose k}$. Actually $\Theta^-$ depends on the parameters only through the combination $\sum\limits_{a=1}^n z_a+\lambda-ky$ and $\Theta^+$ through the combination $\sum\limits_{a=1}^nz_a-\lambda-(n-k)y$.
\end{Remark}
\begin{Example}\label{example-4} For $z\in\mathbb C$ and all $k=0,1,2,\dots$, $\Theta^-_k(z,y,\lambda)$ is a one-dimensional vector space
 spanned by
 \begin{gather*}
 \omega^-_k(t;z,y,\lambda)=\prod_{j=1}^k\theta(\lambda-t_j+z-ky),
 \end{gather*}
 $\Theta^+_k(z,y,\lambda)$ is a one-dimensional vector space spanned by
 \begin{gather*}
 \omega^+_k(t;z,y,\lambda) =\prod_{j=1}^k\theta(\lambda+t_j-z+(1-k)y).
 \end{gather*}
\end{Example}
 \begin{Remark}\label{remark-2} For $z\in\mathbb C^n$, $y,\lambda\in\mathbb C$, $\Theta^-_k(z,y,\lambda)=\Theta^+_k(z,y,-\lambda-(n-2k)y)$ and $\omega^+_k(t;z,y,\lambda)=(-1)^k\omega^-_k(t;z,y,-\lambda-(1-2k)y)$. It is however better to keep the two spaces distinct as they will be given a different structure.
 \end{Remark}
\subsection{Shuffle products}\label{ss-3.2}
Let Sym denote the map sending a function $f(t_1,\dots,t_k)$ of $k$ variables to the symmetric function $\sum\limits_{\sigma\in S_n}f(t_{\sigma(1)},\dots,t_{\sigma(k)})$.
\begin{Proposition}\label{prop-0} Let $n=n'+n''$, $k=k'+k''$ be non-negative integers, $z\in\mathbb C^n$, $z'=(z_1,\dots,z_{n'})$, $z''=(z_{n'+1},\dots, z_n)$. Then the shuffle product
 \begin{gather*}
 *\colon \ \Theta^\pm_{k'}(z',y,\lambda+y(n''-2k''))\otimes \Theta^\pm_{k''}(z'',y,\lambda)\to\Theta^\pm_k(z,y,\lambda),
 \end{gather*}
 sending $f\otimes g$ to
 \begin{gather*}
 f*g(t)=\frac1{k'!k''!} \operatorname{Sym}\big(f(t_1,\dots,t_{k'}) g(t_{k'+1},\dots,t_k)\varphi^\pm(t,z,y)\big),
 \end{gather*}
 with
 \begin{align*}
 \varphi^-(t,z,y)&=\prod_{j=1}^{k'} \prod_{l=k'+1}^k \frac{\theta(t_l-t_j+y)} {\theta(t_l-t_j)} \prod_{l=k'+1}^k\prod_{a=1}^{n'}\theta(t_l-z_a) \prod_{j=1}^{k'}\prod_{b=n'+1}^n\theta(t_j-z_b+y), \\
 \varphi^+(t,z,y)&=\prod_{j=1}^{k'} \prod_{l=k'+1}^k \frac{\theta(t_j-t_l+y)}{\theta(t_j-t_l)} \prod_{l=k'+1}^k\prod_{a=1}^{n'}\theta(t_l-z_a+y) \prod_{j=1}^{k'}\prod_{b=n'+1}^n\theta(t_j-z_b),
 \end{align*}
 is well-defined and associative, in the sense that $(f*g)*h=f*(g*h)$, whenever defined.
\end{Proposition}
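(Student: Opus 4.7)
The plan is to split the claim into well-definedness of $f*g$ as an element of $\Theta^\pm_k(z,y,\lambda)$ and associativity. For well-definedness, the $S_k$-symmetry of $f*g$ is immediate from the outer $\operatorname{Sym}$ together with the individual symmetry of $f$ and $g$. For holomorphicity, each unsymmetrized summand has simple poles only along $t_l-t_j\in\mathbb Z+\tau\mathbb Z$, coming from the factor $1/\theta(t_l-t_j)$ in $\varphi^\pm$. Along the main diagonal $t_i=t_j$, the residue of the symmetric function $f*g$ is both holomorphic in the remaining variables and antisymmetric in $(t_i,t_j)$, hence vanishes, so $f*g$ is holomorphic there. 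Translates $t_i-t_j=m+n\tau$ with $(m,n)\neq(0,0)$ then follow by bootstrapping from the quasi-periodicity of $f*g$ proved next: the transformation $t_l\mapsto t_l+m+n\tau$ sends $f*g$ to itself times a nonvanishing factor, reducing holomorphicity at the translated hyperplane to the already-proved case.

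By the identity $\theta(z+1)=-\theta(z)$ and $S_k$-symmetry, it suffices to verify the quasi-periodicity under $t_1\to t_1+\tau$. Split the $\operatorname{Sym}$ according to whether the slot of $t_1$ lies in the $k'$-block (associated with $f$) or the $k''$-block (associated with $g$). In the first case $f$ contributes $e^{2\pi{\rm i}(\lambda+y(n''-2k'')-k'y)}\prod_{a=1}^{n'}\theta(t_1+\tau-z_a)/\theta(t_1-z_a)$; the cross ratios $\theta(t_l-t_1+y)/\theta(t_l-t_1)$ for $l$ in the $k''$-block contribute $e^{2\pi{\rm i}yk''}$; and the factors $\theta(t_1-z_b+y)$ for $b>n'$ contribute $(-1)^{n''}e^{-n''\pi{\rm i}\tau-2\pi{\rm i}n''(t_1+y)+2\pi{\rm i}\sum_{b>n'}z_b}$. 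After absorbing the missing $\prod_{a>n'}\theta(t_1+\tau-z_a)/\theta(t_1-z_a)$ into the $\Theta^-_k(z,y,\lambda)$ denominator, the $y$-coefficients collect to $y(n''-2k'')-k'y+yk''-n''y=-(k'+k'')y=-ky$, producing the required factor $e^{2\pi{\rm i}(\lambda-ky)}$. The case where $t_1$ lies in the $k''$-block is symmetric (the cross ratios now contribute $e^{-2\pi{\rm i}yk'}$ and $g$ contributes $e^{2\pi{\rm i}(\lambda-k''y)}$), and the $\Theta^+$ case runs in parallel.

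For associativity, both $(f*g)*h$ and $f*(g*h)$ collapse, after unnesting the two symmetrizations using the block-symmetries of $f,g,h,\varphi^\pm$, to $(k'!k''!k'''!)^{-1}\operatorname{Sym}_{S_k}\bigl(f(t_1,\dots,t_{k'})g(t_{k'+1},\dots,t_{k'+k''})h(t_{k'+k''+1},\dots,t_k)\,\Phi^\pm(t,z,y)\bigr)$ for appropriate triple-shuffle factors $\Phi^\pm$. For $((f*g)*h)$ one has $\Phi^\pm=\varphi^\pm(\cdot\,;z',z'',y)\cdot\varphi^\pm(\cdot\,;(z',z''),z''',y)$, and for $(f*(g*h))$ one has $\Phi^\pm=\varphi^\pm(\cdot\,;z'',z''',y)\cdot\varphi^\pm(\cdot\,;z',(z'',z'''),y)$. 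A block-by-block inspection matches the two: each pair $(j,l)$ of $t$-indices in distinct blocks contributes one ratio $\theta(t_l-t_j+y)/\theta(t_l-t_j)$ (its analog for $\varphi^+$ swaps the sign of $t_l-t_j$); each pair $(t_l,z_a)$ with $t_l$ in a later block and $z_a$ in an earlier one contributes $\theta(t_l-z_a)$ (respectively $\theta(t_l-z_a+y)$); and each pair $(t_j,z_b)$ in the opposite configuration contributes $\theta(t_j-z_b+y)$ (respectively $\theta(t_j-z_b)$). Thus both products equal the same universal triple-shuffle factor.

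The main obstacle is bookkeeping rather than conceptual. In the quasi-periodicity step several simultaneous phase contributions in $y$, $\tau$, and $\lambda$ must conspire to give exactly $e^{2\pi{\rm i}(\lambda-ky)}$; in the associativity step the block-cross products must be enumerated carefully. The one slight subtlety in the holomorphicity argument is that the symmetry trick only handles the diagonal $t_i=t_j$, and the remaining pole hyperplanes must be reached by bootstrapping through the quasi-periodicity established in the second paragraph.
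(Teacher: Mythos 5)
Your proof is correct and follows essentially the same route as the paper, which compresses the argument into a citation of Proposition~3 of Felder--Tarasov--Varchenko (1997) plus the two key observations you elaborate: the apparent poles at $t_j=t_l$ cancel under symmetrization because $\theta$ is odd, and each term in the sum over permutations separately has the required quasi-periodicity. Your phase bookkeeping (the exponents collapsing to $\lambda-ky$) and the identification of both associations with a single triple-shuffle factor are the details the paper leaves to the reader, and they check out.
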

\begin{Remark}\label{remark-3} In the formula for $f*g$ in Proposition~\ref{prop-0} we can omit the factor $1/k'!k''!$ and replace the sum over permutations defining Sym by the sum over $(k',k'')$-shuffles, namely permutations $\sigma\in S_{k}$ such that $\sigma(1)<\dots<\sigma(k')$ and $\sigma(k'+1)<\cdots<\sigma(k)$.
\end{Remark}
\begin{proof} This is essentially the first part of Proposition~3 of \cite{FelderTarasovVarchenko1997} in the special case of weights $\Lambda_i=1$. The proof is straightforward: the apparent poles at $t_j=t_l$ are cancelled after the symmetrization since $\theta(t_j-t_l)$ is odd under interchange of $t_j$ with $t_l$. Thus $f*g$ is a symmetric entire function. One then checks that every term in the sum over permutations has the correct transformation property under lattice shifts.
\end{proof}
\begin{Proposition}\label{prop-1} The maps $*$ of Proposition~{\rm \ref{prop-0}} define isomorphisms
 \begin{gather*}
 \oplus_{k'=0}^k \Theta^\pm_{k'}(z',y,\lambda+y(n''-2k'')) \otimes \Theta^\pm_{k-k'}(z'',y,\lambda) \to \Theta^\pm_k(z,y,\lambda)
 \end{gather*}
 for generic $z$, $y$, $\lambda$.
\end{Proposition}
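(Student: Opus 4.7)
The plan is to combine a dimension count with an injectivity argument in the base case $n'=n''=1$.

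\textbf{Step 1 (dimensions).} By Remark~\ref{remark-1} the target has dimension $\binom{n+k-1}{k}$, while the source has dimension $\sum_{k'=0}^{k}\binom{n'+k'-1}{k'}\binom{n''+k-k'-1}{k-k'}$, and these agree by the multichoose Vandermonde identity. It therefore suffices to prove injectivity of the shuffle map for generic $(z,y,\lambda)$.

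\textbf{Step 2 (reduction to $n'=n''=1$).} By induction on $n$ and the associativity of $*$ from Proposition~\ref{prop-0}, every binary shuffle map for a splitting $(n',n'')$ of $n$ can be written as a composition of $(1,1)$-shuffles: each of $\Theta^\pm_{k'}(z',y,\cdot)$, $\Theta^\pm_{k''}(z'',y,\cdot)$, and $\Theta^\pm_k(z,y,\cdot)$ admits an iterated decomposition into tensor products of one-variable spaces via iterated $(1,1)$-shuffles, and these decompositions are compatible through associativity. Once the $(1,1)$-case is known to be an isomorphism, the composition is an isomorphism as well.

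\textbf{Step 3 (base case $n'=n''=1$).} Here Example~\ref{example-4} gives $\dim\Theta^\pm_j(z_i,y,\cdot)=1$ with generator $\omega^\pm_j$, and both sides of the map are $(k+1)$-dimensional. I need to show that
\[
\Omega^\pm_{k''} := \omega^\pm_{k-k''}(\,\cdot\,;z_1,y,\lambda+y(1-2k'')) * \omega^\pm_{k''}(\,\cdot\,;z_2,y,\lambda),\qquad k''=0,\dots,k,
\]
are linearly independent in $\Theta^\pm_k((z_1,z_2),y,\lambda)$. I would evaluate each $\Omega^\pm_{k''}$ at specializations $\xi_p$ placing $p$ arguments near $z_1$ and $k-p$ arguments near $z_2-y$ (in the $-$ case; with $z_1-y$ and $z_2$ replacing these in the $+$ case). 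The zero factors $\theta(t_l-z_1)$ and $\theta(t_j-z_2+y)$ in $\varphi^-$ force every term of the $(k-k'',k'')$-shuffle sum to vanish in the limit unless the specialization points are assigned to blocks of matching sizes, pinning down $k''=k-p$ and a single surviving shuffle. The resulting triangular evaluation matrix has generically nonzero diagonal entries (explicit products of theta values), yielding the required linear independence.

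\textbf{Main obstacle.} The principal technical subtlety lies in Step 3. The factor $\varphi^\pm$ has apparent poles at $t_l=t_j$ coming from $1/\theta(t_l-t_j)$, so the specializations $\xi_p$ must be perturbed to distinct values and the limit taken carefully. One must check that for every non-principal shuffle the zeros from the numerator factors beat the denominator poles, while the principal shuffle yields a finite nonzero theta-product. This combinatorial perturbation analysis is the heart of the base case; once it is in place, Steps 1 and 2 finish the argument.
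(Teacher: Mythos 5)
Your overall strategy --- a dimension count plus linear independence established by dual evaluation forms that are triangular against the shuffle products --- is the same one the paper uses (Section~\ref{ss-3.12}, Proposition~\ref{prop-6} and Lemma~\ref{lemma-7}). However, there are two genuine gaps. The first is structural: Step~2 does not close. To decompose $\Theta^\pm_k(z_1,\dots,z_n,y,\lambda)$ into one-variable pieces ``via iterated $(1,1)$-shuffles'' you must at some point invoke an $(m,1)$- or $(n',n'')$-shuffle with $m,n'\geq 2$ being an isomorphism --- e.g.\ already for $n=3$ the map $\oplus_{k'}\Theta^\pm_{k'}(z_1,z_2)\otimes\Theta^\pm_{k''}(z_3)\to\Theta^\pm_k(z_1,z_2,z_3)$ is a $(2,1)$-shuffle, not a composition of $(1,1)$-shuffles. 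That is an instance of the very proposition you are proving, so the induction is circular. The correct scheme is to prove directly that the \emph{full} $n$-fold shuffle $\Phi_n\colon\oplus\bigotimes_a\Theta^\pm_{k_a}(z_a)\to\Theta^\pm_k(z)$ is an isomorphism for every $n$; then every binary shuffle is $\Phi_n\circ(\Phi_{n'}\otimes\Phi_{n''})^{-1}$ by associativity, which is how the paper deduces Proposition~\ref{prop-1} from Proposition~\ref{prop-6}(i).

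The second gap is in Step~3. Evaluating at coincident points ($p$ arguments at $z_1$, $k-p$ at $z_2-y$) and perturbing, the claim that ``the zeros from the numerator factors beat the denominator poles'' fails term by term. If a shuffle misplaces one near-$z_1$ argument $t_l=z_1+\epsilon$ into the $g$-block while $p-1$ near-$z_1$ arguments sit in the $f$-block, the numerator contributes a single factor $\theta(t_l-z_1)\sim\epsilon$ but the denominator contributes $p-1$ factors $\theta(t_l-t_j)\sim\epsilon$, so the term behaves like $\epsilon^{2-p}$ and diverges for $p\geq 3$; only the \emph{sum} over shuffles stays finite, and your term-by-term limit does not see the required cancellations. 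The paper avoids this entirely by evaluating at \emph{distinct} points in arithmetic progression, $z_a, z_a-y,\dots,z_a-(\ell_a-1)y$ (the forms $\epsilon_{\ell_1,\dots,\ell_n}$ of Section~\ref{ss-3.12}): for generic $y$ no denominator $\theta(t_i-t_j)$ vanishes, the factors $\theta(t_l-t_j+y)$, $\theta(t_l-z_a)$ kill all but one shuffle, and Lemma~\ref{lemma-7} gives a triangular matrix with explicitly nonzero diagonal $\prod_s\theta(\lambda-sy)$ against the whole $n$-fold family at once --- which simultaneously repairs Step~2. With your choice of evaluation points both steps remain open.
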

We prove this Proposition in \ref{ss-3.12} below.

\subsection{Vanishing condition}\label{ss-3.3}
The shuffle product $*$ preserves subspaces defined by a vanishing condition. It is the case of the fundamental weight of a condition introduced in~\cite[Section~8]{FelderTarasovVarchenko1997} for general integral dominant weights.

Let $(z,y,\lambda)\in\mathbb C^n\times \mathbb C\times \mathbb C$. We define a subspace $\bar\Theta^\pm_k(z,y,\lambda)\subset\Theta^\pm_k(z,y,\lambda)$ by a~vanishing condition:
\begin{gather*}
 \bar\Theta^\pm_k(z,y,\lambda) = \begin{cases} \Theta^\pm_k(z,y,\lambda)&\text{if $k=0,1$,}\\
\{f \colon f(t_1,\dots,t_{k-2},z_a,z_a-y)=0,\,1\leq a\leq n,\, t_i\in\mathbb C\} &\text{if $k\geq2$.}
\end{cases}
\end{gather*}
\begin{Example}\label{example-5} For $n=1$,
 \begin{gather*}
 \bar\Theta^\pm_k(z,y,\lambda) =\begin{cases}
 \Theta^\pm_k(z,y,\lambda)\cong \mathbb C,&\text{$k=0,1$,}\\
 0,&\text{$k\geq2$.}
 \end{cases}
 \end{gather*}
 Indeed, the condition is vacuous if $k\leq 1$ and if $k\geq2$ then $\omega^\pm_k(z;z-y,t_3,\dots)=\theta(\lambda-ky)\theta(\lambda+(1-k)y)$ times a nonzero function. For $k=1$, $n\geq1$, $\bar\Theta^\pm_1(z,y,\lambda) = \Theta^\pm_1(z,y,\lambda)$. For $k=2$, $n=2$, $\bar\Theta^\pm_2(z_1,z_2,y,\lambda)$ is a one-dimensional subspace of the three-dimensional space $\Theta^\pm_2(z,y,\lambda)$.
\end{Example}
\begin{Proposition}\label{prop-2} The shuffle product restricts to a map
\begin{gather*}
 \oplus_{k'=0}^k \bar\Theta^\pm_{k'} (z',y,\lambda+y(n''-2k'')) \otimes \bar\Theta^\pm_{k-k'} (z'',y,\lambda) \to \bar\Theta^\pm_k(z,y,\lambda),
 \end{gather*}
 which is an isomorphism for generic values of the parameters.
\end{Proposition}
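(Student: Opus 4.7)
The proof has two parts: showing the shuffle product sends $\bar\Theta^\pm\otimes\bar\Theta^\pm$ into $\bar\Theta^\pm$, and then upgrading the generic isomorphism of Proposition~\ref{prop-1} to the restricted setting.

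For the restriction, I would fix $a\in[n]$ and evaluate $(f*g)(t_1,\dots,t_{k-2},z_a,z_a-y)$ term by term in the sum over $\sigma\in S_k$ defining $\operatorname{Sym}$. In each term, the two specialized values $z_a$ and $z_a-y$ are routed by $\sigma$ either to $f$'s slot (the first $k'$ positions of $(t_{\sigma(1)},\dots,t_{\sigma(k)})$) or to $g$'s slot (the remaining positions). If both go to $f$, then either $f$ vanishes by hypothesis (when $a\in[n']$) or the factor $\theta(t_j-z_a+y)$ of $\varphi^-$ at the $f$-slot holding $t=z_a-y$ equals $\theta(0)$ (when $a\in\{n'+1,\dots,n\}$). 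The ``both to $g$'' case is symmetric. If the two specialized values are split between $f$ and $g$, then in one orientation the cross factor $\theta(t_l-t_j+y)$ with $t_j=z_a$ and $t_l=z_a-y$ evaluates to $\theta(0)$; in the opposite orientation, the factor $\theta(t_l-z_a)$ kills the term for $a\in[n']$, while $\theta(t_j-z_a+y)$ kills it for $a\in\{n'+1,\dots,n\}$. The analogous factors in $\varphi^+$ yield the same cancellations.

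Injectivity of the restricted map is immediate from Proposition~\ref{prop-1}. Surjectivity I would deduce from a dimension count: the Vandermonde identity $\sum_{k'}\binom{n'}{k'}\binom{n-n'}{k-k'}=\binom{n}{k}$ gives the domain dimension, and this must match $\dim\bar\Theta^\pm_k(z,y,\lambda)=\binom{n}{k}$. The lower bound $\dim\bar\Theta^\pm_k\geq\binom{n}{k}$ follows by iterating the restriction one variable at a time, using associativity (Proposition~\ref{prop-0}) together with Example~\ref{example-5} (which says $\dim\bar\Theta^\pm_{k_i}(z_i)=1$ for $k_i\in\{0,1\}$ and $0$ otherwise) to produce an injection of a $\binom{n}{k}$-dimensional tensor product into $\bar\Theta^\pm_k(z)$.

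The main obstacle is the matching upper bound $\dim\bar\Theta^\pm_k\leq\binom{n}{k}$. A direct count of the $n$ vanishing conditions against $\dim\Theta^\pm_k(z)=\binom{n+k-1}{k}$ gives the correct value for $k\leq n$ but overcounts for larger $k$, so the conditions are not linearly independent in general. The cleanest route I see is to rely on the explicit elliptic weight functions indexed by $k$-subsets $I\subseteq[n]$, introduced later in the paper: once these are checked to lie in $\bar\Theta^\pm_k$ and to be linearly independent, the dimension equality $\binom{n}{k}$ is forced, and the restricted shuffle map is a generic isomorphism.
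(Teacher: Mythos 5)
Your first step (the restriction property) is correct and amounts to the same computation the paper performs, just organized differently: the paper proves (Proposition~\ref{prop-ev-*} and Corollary~\ref{cor-3}) that $\mathrm{ev}_{z_c,2}(f*g)$ equals $\mathrm{ev}_{z_c,2}(f)*g$ or $f*\mathrm{ev}_{z_c,2}(g)$ up to an explicit factor, which is exactly your term-by-term bookkeeping of where the two specialized values $z_a$, $z_a-y$ land. Your lower bound $\dim\bar\Theta^\pm_k\geq\binom{n}{k}$ (the iterated shuffle products $\omega^\pm_{k_1}*\cdots*\omega^\pm_{k_n}$ with all $k_a\in\{0,1\}$ lie in $\bar\Theta^\pm_k$ by the restriction property and are linearly independent by Proposition~\ref{prop-1}) is also sound.

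The gap is the upper bound, i.e., surjectivity, and the route you propose for it is circular. The ``explicit elliptic weight functions indexed by $k$-subsets'' are by definition precisely the iterated shuffle products $\omega^\pm_{k_1}*\cdots*\omega^\pm_{k_n}$ with $k_a\in\{0,1\}$; verifying that they lie in $\bar\Theta^\pm_k$ and are linearly independent merely reproduces your lower bound and cannot ``force'' the equality $\dim\bar\Theta^\pm_k=\binom{n}{k}$ --- a space can contain $\binom{n}{k}$ independent vectors and still be larger. What is needed is a spanning argument, and this is where the paper's proof (Section~\ref{ss-3.12}, via Proposition~\ref{prop-6}) does real work. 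It introduces the iterated evaluation functionals $\epsilon_{\ell_1,\dots,\ell_n}=\mathrm{ev}_{z_n,\ell_n}\circ\cdots\circ\mathrm{ev}_{z_1,\ell_1}$ and shows (Lemma~\ref{lemma-7}) that they pair triangularly, with generically nonzero diagonal entries, against the \emph{full} basis $\omega^\pm_{k_1}*\cdots*\omega^\pm_{k_n}$, $k_a\geq0$, of the ambient space $\Theta^\pm_k$ of dimension $\binom{n+k-1}{k}$. Since $\epsilon_{\ell}$ with some $\ell_a\geq2$ involves evaluating at $z_a$ and $z_a-y$, it annihilates all of $\bar\Theta^\pm_k$; expanding an arbitrary element of $\bar\Theta^\pm_k$ in the full basis and applying these functionals, triangularity forces the coefficients of every term with some $k_a\geq2$ to vanish, so the $k_a\in\{0,1\}$ family spans. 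That is the step missing from your argument, and without it (or some substitute) the isomorphism claim is not established.
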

The proof is postponed to Section~\ref{ss-3.12} below. By iteration we obtain shuffle multiplication maps
\begin{gather*}
 \bar\Phi_k^{\pm}(z,y,\lambda)\colon\bigoplus_{\Sigma k_a=k} \bigotimes_{a=1}^n \bar\Theta^\pm_{k_a}\bigg(z_a,y,\lambda-y\sum_{b=a+1}^n(2k_a-1)\bigg) \to \bar\Theta^\pm_k(z_1,\dots,z_n,y,\lambda),
\end{gather*}
defined for $(z,y,\lambda)\in\mathbb C^n\times \mathbb C\times \mathbb C$ and $k=0,1,2,\dots$. The direct sum is over the ${n \choose k}$ $n$-tuples $(k_1,\dots,k_n)$ with sum $k$ and $k_a\in\{0,1\}$, $a=1,\dots,n$.
\begin{Corollary}\label{cor-0} The maps $\bar\Phi_k^{\pm}(z,y,\lambda)$ are isomorphisms for generic $(z,y,\lambda)\in\mathbb C^n\times \mathbb C\times \mathbb C$.
\end{Corollary}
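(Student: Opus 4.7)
The plan is to prove this by induction on $n$, using Proposition~\ref{prop-2} as the inductive step and the associativity of the shuffle product (Proposition~\ref{prop-0}) to ensure that the iterated map is well-defined independently of the order in which variables are split off. The base case $n=1$ is immediate from Example~\ref{example-5}: since $\bar\Theta^\pm_{k}(z_1,y,\lambda)=0$ for $k\geq 2$, the direct sum indexing $\bar\Phi_k^\pm$ (which is restricted to $n$-tuples with $k_a\in\{0,1\}$) either contains no terms for $k\geq 2$ or reduces to a single summand for $k\in\{0,1\}$, on which $\bar\Phi_k^\pm$ is visibly the identity.

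For the inductive step I would split off the first variable. Proposition~\ref{prop-2} applied with $n'=1$, $n''=n-1$ gives, for generic parameters, an isomorphism
\[
\bigoplus_{k_1=0}^{k} \bar\Theta^\pm_{k_1}\big(z_1,y,\lambda+y(n-1-2(k-k_1))\big) \otimes \bar\Theta^\pm_{k-k_1}\big((z_2,\dots,z_n),y,\lambda\big) \xrightarrow{\ \sim\ } \bar\Theta^\pm_k\big((z_1,\dots,z_n),y,\lambda\big).
\]
Because $\bar\Theta^\pm_{k_1}(z_1,y,\cdot)$ vanishes for $k_1\geq 2$, only the terms with $k_1\in\{0,1\}$ survive. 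I then invoke the inductive hypothesis on the second tensor factor to further decompose $\bar\Theta^\pm_{k-k_1}((z_2,\dots,z_n),y,\lambda)$ as an iterated tensor product of one-variable spaces $\bar\Theta^\pm_{k_a}(z_a,y,\cdot)$ with the shifted dynamical parameters prescribed by repeated application of the rule in Proposition~\ref{prop-2}. Associativity (Proposition~\ref{prop-0}) ensures that the composition of these two steps agrees with $\bar\Phi_k^\pm$; the only remaining check is that the accumulated $\lambda$-shifts match those in the statement, which is a direct bookkeeping computation.

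The main obstacle I anticipate is controlling what "generic" means across the induction. Each application of Proposition~\ref{prop-2} is an isomorphism on the complement of a proper analytic subvariety of $\mathbb{C}^n\times\mathbb{C}\times\mathbb{C}$ — concretely, the vanishing locus of a determinant in suitable bases of the relevant $\bar\Theta^\pm$-spaces. The inductive construction uses at most $n-1$ such applications, each preceded by an affine change of the dynamical parameter. Since a finite union of proper analytic subvarieties is still proper, and an affine change of variables pulls such a subvariety back to another one, the exceptional locus of $\bar\Phi_k^\pm$ remains a proper analytic subvariety of $\mathbb{C}^n\times\mathbb{C}\times\mathbb{C}$. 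Thus $\bar\Phi_k^\pm$ is an isomorphism on a non-empty Zariski-open subset, which is the precise meaning of "generic" in the statement.
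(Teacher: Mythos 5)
Your argument is correct and is essentially the paper's own: Corollary~\ref{cor-0} is obtained there precisely by iterating Proposition~\ref{prop-2} (the text says ``by iteration''), with the one-variable spaces $\bar\Theta^\pm_{k_a}(z_a,y,\cdot)$ collapsing to the cases $k_a\in\{0,1\}$ exactly as you describe, and associativity of the shuffle product identifying the iterated map with $\bar\Phi^\pm_k$. Your handling of genericity (a finite union of proper analytic exceptional loci, each pulled back along an affine reparametrization, is still proper) is a legitimate filling-in of what the paper leaves implicit; the paper's Section~\ref{ss-3.12} in fact proves the sharper Proposition~\ref{prop-6}(ii), which yields the corollary with explicit genericity conditions, but that is a refinement rather than a different route.
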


Thus, for generic $z,y,\lambda\in\mathbb C^n\times\mathbb C\times\mathbb C$, $\bar\Theta^\pm_k(z,y,\lambda)$ has dimension ${n \choose k}$ and is zero if $k>n$.

\subsection{Duality}\label{ss-3.4}
\begin{Proposition}\label{p-duality} The identification
\begin{gather*}
 \varrho\colon \ \Theta^-_k(z,y,\lambda)\to \Theta^+_k(z,y,-\lambda-(n-2k)y)
\end{gather*}
of Remark~{\rm \ref{remark-2}} $($the identity map$)$ restricts to an isomorphism
\begin{gather*}
 \bar \Theta^-_k(z,y,\lambda)\to \bar\Theta^+_k(z,y,-\lambda-(n-2k)y),
\end{gather*}
also denoted by $\varrho$. For $f\in\Theta^-_{k'}$ and $g\in\Theta^-_{k''}$ as in Proposition~{\rm \ref{prop-0}}, the shuffle product $\varrho(g)*\varrho(f)$ is well-defined and obeys
\begin{gather*}
 \varrho(f*g)=\varrho(g)*\varrho(f).
\end{gather*}
\end{Proposition}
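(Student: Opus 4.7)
The first claim is immediate from Remark~\ref{remark-2}: as sets of entire symmetric functions of $t_1,\dots,t_k$, the two spaces $\Theta^-_k(z,y,\lambda)$ and $\Theta^+_k(z,y,-\lambda-(n-2k)y)$ coincide, and the vanishing condition $f(t_1,\dots,t_{k-2},z_a,z_a-y)=0$ defining the barred subspaces is the same formula for both signs; the identity map therefore restricts to the asserted isomorphism.

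For the second claim the plan is first to verify that $\varrho(g)*\varrho(f)$ is a legitimate shuffle product in the sense of Proposition~\ref{prop-0}, and then to match its symmetrization formula with that of $f*g$ via a block permutation. By Remark~\ref{remark-2} the weights satisfy $\varrho(g)\in\Theta^+_{k''}(z'',y,-\lambda-(n''-2k'')y)$ and $\varrho(f)\in\Theta^+_{k'}(z',y,-\lambda-(n-2k)y)$. Setting $\tilde\lambda:=-\lambda-(n-2k)y$, a short arithmetic check confirms that these are precisely the weights required by Proposition~\ref{prop-0} to form the $+$-shuffle with $\varrho(g)$ as the first factor (parameters $(z'',k'',n'')$) and $\varrho(f)$ as the second (parameters $(z',k',n')$); the output lies in $\Theta^+_k((z'',z'),y,\tilde\lambda)=\Theta^+_k(z,y,\tilde\lambda)=\varrho(\Theta^-_k(z,y,\lambda))$, where the first equality uses that the product $\prod_{j,a}\theta(t_j-z_a+y)$ in Definition~\ref{def-theta+} is symmetric in the $z_a$.

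To prove the identity of functions, I would apply inside the symmetrand of $\varrho(g)*\varrho(f)$ the block permutation $\sigma\in S_k$ sending $j\mapsto k'+j$ for $1\le j\le k''$ and $l\mapsto l-k''$ for $k''<l\le k$. Since $\operatorname{Sym}$ is invariant under any fixed relabeling of the arguments, this leaves the sum unchanged. Under the substitution, $\varrho(g)(t_1,\dots,t_{k''})\varrho(f)(t_{k''+1},\dots,t_k)$ becomes $g(t_{k'+1},\dots,t_k)f(t_1,\dots,t_{k'})$, and each of the three groups of factors in the $+$-kernel for the reversed shuffle transforms factor-by-factor into the corresponding group of $\varphi^-(t,z,y)$: the ratio $\theta(t_j-t_l+y)/\theta(t_j-t_l)$ over $j\le k''<l$ becomes $\theta(t_l-t_j+y)/\theta(t_l-t_j)$ over $j\le k'<l$; the factor $\theta(t_l-z_b+y)$ with $l>k''$ and $b\in\{n'+1,\dots,n\}$ becomes $\theta(t_j-z_b+y)$ with $j\le k'$ and $b>n'$; and the factor $\theta(t_j-z_a)$ with $j\le k''$ and $a\in\{1,\dots,n'\}$ becomes $\theta(t_l-z_a)$ with $l>k'$ and $a\le n'$. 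Since the combinatorial prefactor $1/(k'!k''!)$ is unchanged, the identity $\varrho(f*g)=\varrho(g)*\varrho(f)$ follows.

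The only step requiring care is this index bookkeeping at the end; there is no analytic obstacle, the conclusion being a direct consequence of the $S_k$-invariance of $\operatorname{Sym}$ together with the explicit matching of $\varphi^+$ and $\varphi^-$ under the block swap.
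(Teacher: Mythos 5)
Your proof is correct and follows essentially the same route as the paper: the paper's entire argument for the second claim is the single identity $\varphi^-(t,z,y)=\varphi^+(t_{k'+1},\dots,t_k,t_1,\dots,t_{k'},z_{n'+1},\dots,z_n,z_1,\dots,z_{n'},y)$, which is exactly the factor-by-factor matching you carry out under the block permutation, combined with the $S_k$-invariance of $\operatorname{Sym}$. Your additional verification of the weight bookkeeping (that $\varrho(g)$ and $\varrho(f)$ land in the spaces required by Proposition~\ref{prop-0} for the reversed shuffle) is a detail the paper leaves implicit, and it checks out.
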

\begin{proof} It is clear that the vanishing condition is preserved. The last claim follows from the identity
 \begin{gather*}
 \varphi^-(t,z,y)= \varphi^+(t_{k'+1},\dots,t_k,t_1,\dots,t_{k'},z_{n'+1},\dots,z_n,z_1,\dots,z_{n'},y)
 \end{gather*}
 for the functions appearing in the definition of the shuffle product.
\end{proof}

\begin{Remark}\label{remark-4} For $n=1$ we have $\varrho(\omega^-_k)=(-1)^k\omega^+_k$, see Example \ref{example-4}.
\end{Remark}
\subsection{Weight functions}\label{ss-3.5}
For $(z,y,\lambda)\in\mathbb C^n\times\mathbb C\times\mathbb C$, let
\begin{gather*}
\bar\Theta^\pm(z,y,\lambda)=\oplus_{k=0}^n \bar\Theta^\pm_k(z,y,\lambda).
\end{gather*}
It is an $\mathfrak h$-module with $\bar\Theta^\pm_{k}$ of weight $-n+2k$. Let $v_1$, $v_2$ be the standard basis of $\mathbb C^2$. If $n=1$, we identify $\bar \Theta^\pm(z,y,\lambda)$ with $\mathbb C^2$ via the map
$\omega^{\pm}_1\mapsto v_1$, $\omega^{\pm}_0\mapsto v_2$. Then $\bar\Phi^\pm(z,y,\lambda)=\oplus_k\bar\Phi^\pm_k(z,y,\lambda)$ is a linear map
\begin{gather*}
 \big(\mathbb C^2\big)^{\otimes n}\to \bar\Theta^\pm(z,y,\lambda).
\end{gather*}
It is a homomorphism of $\mathfrak h$-modules. Then a basis of $\big(\mathbb C^2\big)^{\otimes n}$ is labeled by subsets $I$ of $[n]=\{1,\dots,n\}$: $v_I=v_{j(1)}\otimes\cdots\otimes v_{j(n)}$ with $j(a)=2$ if $a\in I$ and $j(a)=1$ if $a\in\bar I$, the complement of~$I$.
\begin{Definition} The {\em weight functions} $\omega^\pm_I(t;z,y,\lambda)$ are the functions
 \begin{gather*}
 \omega^\pm_I(\cdot;z,y,\lambda)=\bar\Phi^{\pm}(z,y,\lambda)v_I\in \bar \Theta^\pm(z,y,\lambda).
 \end{gather*}
\end{Definition}
In particular, for $n=1$, $\omega^\pm_\varnothing=\omega^\pm_0$, $\omega^\pm_{\{1\}}=\omega_1^\pm$. Corollary~\ref{cor-0} implies:
\begin{Proposition}\label{p-basis} Let $(z,y,\lambda)$ be generic. The weight functions $\omega^\pm_I(\cdot; z,y,\lambda)$ with $I\subset [n]$, $|I|=k$ form a basis of the space $\bar\Theta^\pm_k(z,y,\lambda)$ of theta
functions obeying the vanishing condition.
\end{Proposition}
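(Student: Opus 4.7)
The plan is to deduce Proposition \ref{p-basis} as a direct consequence of Corollary \ref{cor-0} together with the weight-graded nature of the situation. First I would observe that $\bar\Phi^\pm(z,y,\lambda)=\bigoplus_{k=0}^n\bar\Phi^\pm_k(z,y,\lambda)$ assembles the iterated shuffle maps into a single linear map $(\mathbb C^2)^{\otimes n}\to\bar\Theta^\pm(z,y,\lambda)$; since each $\bar\Phi_k^\pm$ is an isomorphism for generic $(z,y,\lambda)$ by Corollary \ref{cor-0}, so is the direct sum.

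Next I would check that $\bar\Phi^\pm$ is compatible with the $\mathfrak h$-weight decomposition on both sides. On the target, $\bar\Theta_k^\pm$ is by definition the weight-$(-n+2k)$ subspace. On the domain, using the identifications $\omega^\pm_1\mapsto v_1$, $\omega^\pm_0\mapsto v_2$ at each factor $\bar\Theta^\pm_{k_a}(z_a,y,\lambda')$, each summand in the domain of $\bar\Phi_k^\pm$ with $\sum_a k_a=k$ corresponds to exactly one basis vector $v_I\in (\mathbb C^2)^{\otimes n}$; the ${n\choose k}$ such basis vectors span precisely the weight subspace of $(\mathbb C^2)^{\otimes n}$ matching the weight of $\bar\Theta_k^\pm$. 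Thus $\bar\Phi^\pm$ is a homomorphism of $\mathfrak h$-modules, and $\bar\Phi_k^\pm$ is its restriction to the appropriate weight summand.

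The conclusion is then immediate. An isomorphism sends a basis to a basis, and the vectors $v_I$ with $|I|$ matching the weight index $k$ form a basis of the corresponding weight subspace of $(\mathbb C^2)^{\otimes n}$. Applying $\bar\Phi^\pm(z,y,\lambda)$ and using the definition $\omega_I^\pm=\bar\Phi^\pm(z,y,\lambda)v_I$, we obtain the desired basis of $\bar\Theta_k^\pm(z,y,\lambda)$.

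I do not expect a genuine obstacle: all of the real analytic and combinatorial content has been absorbed into Corollary \ref{cor-0} (and behind it Proposition \ref{prop-2} on the preservation of the vanishing condition under shuffling). The only step that needs a moment of care is the bookkeeping in the second paragraph, confirming that the weight grading on $(\mathbb C^2)^{\otimes n}$ induced by the identification matches the grading by $k$ on $\bar\Theta^\pm$; once that is pinned down, the statement is a pure linear-algebra consequence.
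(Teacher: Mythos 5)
Your proposal is correct and is essentially the paper's own argument: the paper simply states ``Corollary~\ref{cor-0} implies'' Proposition~\ref{p-basis}, relying on exactly the observation you spell out, namely that $\bar\Phi_k^\pm$ is an isomorphism from a direct sum of ${n\choose k}$ one-dimensional summands indexed by the $v_I$ with $|I|=k$, so the images $\omega_I^\pm=\bar\Phi^\pm v_I$ form a basis. Your extra bookkeeping on the weight grading is a harmless elaboration of what the paper leaves implicit.
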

\begin{Example}\label{example-6} For $k=1$ and $n=1,2\dots$, $z\in\mathbb C^n$, $y\in\mathbb C$, $\lambda\in\mathbb C$, $a=1,\dots,n$,
 \begin{gather*}
 \omega^-_{\{a\}}(t;z,y,\lambda) =\theta(\lambda-t+z_a+y(n-a-1)) \prod_{b=1}^{a-1}\theta(t-z_b) \prod_{b=a+1}^n \theta(t-z_b+y),\\
 \omega^+_{\{a\}}(t;z,y,\lambda) =\theta(\lambda+t-z_a+y(n-a)) \prod_{b=1}^{a-1}\theta(t-z_b+y) \prod_{b=a+1}^n \theta(t-z_b).
 \end{gather*}
\end{Example}
\subsection[$R$-matrices]{$\boldsymbol{R}$-matrices}\label{ss-3.6}
Note that while $\bar\Theta^{\pm}_k(z,y,\lambda)$ is independent of the ordering of $z_1,\dots,z_n$ the map $\bar\Phi_k^\pm$ does depend on it and different orderings are related by $R$-matrices, as we now describe. We
define $R$-matrices $R^\pm(z,y,\lambda)\in\operatorname{End}_{\mathfrak h}\big(\mathbb C^2\otimes\mathbb C^2\big)$ by
\begin{gather*}
 R^{\pm}(z_1-z_2,y,\lambda) =\bar\Phi^\pm(z_1,z_2,y,\lambda)^{-1} \bar\Phi^\pm(z_2,z_1,y,\lambda)P,
\end{gather*}
where $Pu\otimes v=v\otimes u$ is the flip of factors. Up to duality and gauge transformation, these $R$-matrices coincide with the elliptic $R$-matrix of Section~\ref{ss-2.3}:
\begin{Proposition}\label{prop-3}\quad
\begin{enumerate}\itemsep=0pt
 \item[$(i)$] Let $s_i\in S_n$ be the transposition $(i,i+1)$. Then
 \begin{gather*}
 \bar\Phi^\pm(s_iz,y, \lambda) =\bar\Phi^\pm(z,y,\lambda) R^\pm\bigg(z_i-z_{i+1},y,\lambda -y\sum_{j=i+2}^nh^{(j)}\bigg)^{(i,i+1)}P^{(i,i+1)}.
 \end{gather*}
 \item[$(ii)$] The $R$-matrices $R^\pm$ obey the dynamical Yang--Baxter equation \eqref{e-YBE} and the inversion relation~\eqref{e-inversion}.
 \item[$(iii)$] With respect to the basis $v_1\otimes v_1$, $v_1\otimes v_2$, $v_2\otimes v_1$, $v_2\otimes v_2$ of $\mathbb C^2\otimes \mathbb C^2$,
 \begin{gather*}
 R^-(z,y,\lambda)= \left(
 \begin{matrix}
 1 & 0 & 0 & 0\\
 0 & \alpha(-z,y,-\lambda) & \beta(-z,y,\lambda) & 0\\
 0 & \beta(-z,y,-\lambda) & \alpha(-z,y,\lambda) & 0\\
 0 & 0 & 0& 1
 \end{matrix}
 \right)=R^\vee(z,y,\lambda)
 \end{gather*}
 is the dual $R$-matrix, see Section~{\rm \ref{ss-2.2}}, with the standard identification of $\mathbb C^2$ with $\big(\mathbb C^2\big)^*$ and
 \begin{gather*}
 R^+(z,y,\lambda)= \left(
 \begin{matrix}
 1 & 0 & 0 & 0\\
 0 & \alpha(z,y,-\lambda) & \beta(z,y,\lambda) & 0\\
 0 & \beta(z,y,-\lambda) & \alpha(z,y,\lambda) & 0\\
 0 & 0 & 0& 1
 \end{matrix}
 \right)= R_\psi(z,y,\lambda)
 \end{gather*}
 is the gauge transformed $R$-matrix with
 \begin{gather*}
 \psi(\lambda)=\left(
 \begin{matrix}
 \theta(\lambda)\theta(\lambda-y) & 0 \\
 0 & 1
 \end{matrix}
 \right).
 \end{gather*}
 \end{enumerate}
\end{Proposition}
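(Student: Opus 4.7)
The plan is to reduce everything to the two-site case and then lift to general $n$ via the associativity of the shuffle product $*$ (Proposition~\ref{prop-0}). Part~(i) is the structural heart of the statement; given it, part~(ii) follows by applying the braid and involution relations in $S_n$ together with the generic invertibility of $\bar\Phi^\pm$ (Corollary~\ref{cor-0}), and part~(iii) reduces to a finite $n=2$ computation with the explicit weight functions of Example~\ref{example-6}. The main difficulty I expect is the bookkeeping of the dynamical shift in~(i): the $\lambda$-shift produced by the shuffle product must be matched with the operator-valued shift $-y\sum_{j=i+2}^{n}h^{(j)}$, which requires careful translation between the degree grading on $\bar\Theta^\pm$ and the $h$-action on $(\mathbb C^2)^{\otimes n}$.

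For~(i), I would regroup the iterated shuffle product defining $\bar\Phi^\pm(z,y,\lambda)$ as a triple product in which the middle factor is $\bar\Phi^\pm(z_i,z_{i+1},y,\lambda'')$, with a dynamical parameter $\lambda''$ dictated by the $\lambda\mapsto\lambda+y(n''-2k'')$ rule of Proposition~\ref{prop-0}. Evaluated on a weight vector $v_I$ this parameter equals $\lambda - y\sum_{j=i+2}^{n}h^{(j)}$. The permutation $s_i$ swaps only $z_i\leftrightarrow z_{i+1}$ in the middle block, and the two-variable definition $R^\pm(z_1-z_2,y,\lambda)=\bar\Phi^\pm(z_1,z_2,y,\lambda)^{-1}\bar\Phi^\pm(z_2,z_1,y,\lambda)P$ rewrites this swap as right-multiplication by $R^\pm(z_i-z_{i+1},y,\lambda'')^{(i,i+1)}P^{(i,i+1)}$. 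Reassembling the triple product yields the formula of~(i).

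For~(ii), I would apply~(i) to two reduced expressions of the same element of $S_3$ acting on three adjacent tensor factors (take $n=3$ and $i=1$): the relation $s_1s_2s_1=s_2s_1s_2$ produces two different expressions of $\bar\Phi^\pm(\sigma z,y,\lambda)$ as $\bar\Phi^\pm(z,y,\lambda)$ right-multiplied by a product of three $R$-matrices and three flips $P$, and Corollary~\ref{cor-0} forces the two products to coincide, which after disentangling the $P$'s is the dynamical Yang--Baxter equation~\eqref{e-YBE}. The involution $s_i^2=\mathrm{id}$ similarly yields the inversion relation~\eqref{e-inversion}.

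For~(iii), the entries of $R^\pm$ on $v_1\otimes v_1$ and $v_2\otimes v_2$ are forced to be $1$ because $\bar\Theta^\pm_0$ and $\bar\Theta^\pm_2$ are one-dimensional. In the weight-zero block I would take the explicit formulas of Example~\ref{example-6} for $\omega^\pm_{\{1\}}$ and $\omega^\pm_{\{2\}}$, expand $\omega^\pm_{\{a\}}(t;z_2,z_1,y,\lambda)$ in the basis $\{\omega^\pm_{\{1\}}(t;z_1,z_2,y,\lambda),\,\omega^\pm_{\{2\}}(t;z_1,z_2,y,\lambda)\}$, and determine the two coefficients by evaluating the identity at $t=z_1$ and $t=z_2$. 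This reduces the matching to a Fay-type three-term identity for products of two theta functions, and the coefficients obtained are the stated $\alpha$ and $\beta$. Comparing with the $R$-matrix of Section~\ref{ss-2.3} and with the gauge-transformation recipe of Section~\ref{ss-2.2} then identifies $R^-$ with $R^\vee$ and $R^+$ with $R_\psi$ for the diagonal $\psi$ prescribed in~(iii).
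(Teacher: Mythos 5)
The paper states Proposition~\ref{prop-3} without proof (the identification of $R^\pm$ with the elliptic dynamical $R$-matrix is essentially imported from~\cite{FelderTarasovVarchenko1997}), so there is no argument in the text to measure yours against; judged on its own, your strategy is the natural one and works: associativity of the shuffle product reduces~(i) to the two-site definition of $R^\pm$, (ii) follows from the braid and involution relations in $S_n$ together with the generic invertibility of $\bar\Phi^\pm$ (Corollary~\ref{cor-0}), and (iii) is a finite $n=2$ computation against Example~\ref{example-6}. Your bookkeeping of the dynamical shift in~(i) is correct: the $a$-th factor of the iterated product carries $\lambda-y\sum_{b>a}(2k_b-1)$, and since $\bar\Theta^\pm_{k_b}(z_b,\cdot)$ has $\mathfrak h$-weight $2k_b-1$ this is exactly the operator $\lambda-y\sum_{b>a}h^{(b)}$.

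Two steps should be made explicit. First, in~(i) the assertion that ``$s_i$ swaps only $z_i\leftrightarrow z_{i+1}$ in the middle block'' is precisely the point being used, so it needs a sentence of justification: the outer multiplications in the regrouped triple product are unchanged under permuting $z_i$ with $z_{i+1}$ because the cross factors in $\varphi^\pm$ of Proposition~\ref{prop-0} involve only the symmetric products $\prod_{a\in\{i,i+1\}}\theta(t-z_a)$ and $\prod_{a\in\{i,i+1\}}\theta(t-z_a+y)$, and the intermediate space $\bar\Theta^\pm(z_i,z_{i+1},y,\lambda'')$ itself does not depend on the ordering of its two arguments. Second, in~(iii) the one-dimensionality of $\bar\Theta^\pm_0$ and $\bar\Theta^\pm_2$ (for $n=2$) only forces the two extreme diagonal entries to be scalars, not to equal $1$; for the $k=2$ block you must still check that $\omega^\pm_{\{1,2\}}(t;z_1,z_2,y,\lambda)=\omega^\pm_{\{1,2\}}(t;z_2,z_1,y,\lambda)$, which follows for instance by evaluating both at $t=(z_1,z_2)$ and using Lemma~\ref{lemma-5}, whose value $\prod_{a}\theta(\lambda-(w(a,I)+1)y)$ times the symmetric factor $\theta(z_1-z_2+y)\theta(z_2-z_1+y)$ is manifestly symmetric in $z_1,z_2$ and generically nonzero (the $k=0$ block is the constant function $1$ and needs nothing). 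With these additions the argument is complete; note also that in the weight-zero block your evaluation at $t=z_1$ and $t=z_2$ against the triangular evaluation matrix of Lemma~\ref{lemma-5} determines the two coefficients by a linear solve, so no three-term theta identity is actually required.
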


\begin{Corollary}\label{cor-1} Let $(z,y,\lambda)\in\mathbb C^n\times \mathbb C\times\mathbb C$ be generic and set
 \begin{gather*}
 S_i(z,y,\lambda)= R\bigg(z_i-z_{i+1},y,\lambda-y\sum_{j={i+2}}^{n}h^{(j)}\bigg)^{(i,i+1)}P^{(i,i+1)} \in\operatorname{End}_{\mathfrak h}\big(\big(\mathbb C^2\big)^{\otimes n}\big),
 \end{gather*}
 $i=1,\dots,n-1$, cf.~\eqref{e-Si}.
 \begin{enumerate}\itemsep=0pt
 \item[$(i)$] For $t\in\mathbb C^k$, let $\omega^-(t;z,y,\lambda)=\sum\limits_{I\subset[n], |I|=k}\omega^-_I(t;z,y,\lambda)v_I$. Then
 \begin{gather*}
 \omega^-(t;z,y,\lambda)=S_i(z,y,\lambda) \omega^-(t;s_iz,y,\lambda).
 \end{gather*}
 \item[$(ii)$] Let $\psi_{V^{\otimes n}}(\lambda)=\prod\limits_{i=1}^n\psi\big(\lambda-y \sum\limits_{j>i}h^{(j)}\big)^{(i)}$, cf.~Section~{\rm \ref{ss-2.2}}. Then
 \begin{gather*}
 \bar\Phi^+(s_iz,y,\lambda)\psi_{V^{\otimes n}}(\lambda)^{-1} =\bar\Phi^+(z,y,\lambda)\psi_{V^{\otimes n}}(\lambda)^{-1} S_i(z,y,\lambda).
 \end{gather*}
 \end{enumerate}
\end{Corollary}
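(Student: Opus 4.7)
The strategy is to derive both identities from Proposition~\ref{prop-3}(i), using Proposition~\ref{prop-3}(iii) to convert $R^\pm$ into the original $R$-matrix.

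For part~(i), applying Proposition~\ref{prop-3}(i) to each basis vector $v_I$ gives
\[ \omega^-_I(t;s_iz,y,\lambda) = \sum_K A_{KI}\,\omega^-_K(t;z,y,\lambda), \qquad A = (R^-)^{(i,i+1)} P^{(i,i+1)}, \]
with dynamical argument $\lambda - y\sum_{j>i+1} h^{(j)}$ inside $R^-$. Equivalently, $\omega^-(t;s_iz,y,\lambda) = A^T\omega^-(t;z,y,\lambda)$ as vectors in $(\mathbb C^2)^{\otimes n}$ with function coefficients. By the linear independence of the $\omega^-_K(\cdot;z,y,\lambda)$ in each fixed weight (Proposition~\ref{p-basis}), the claimed identity $\omega^-(t;z,y,\lambda)=S_i(z,y,\lambda)\,\omega^-(t;s_iz,y,\lambda)$ is then equivalent to the matrix identity $S_iA^T=\mathrm{Id}$. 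Using $P^T=P$ and $P^2=\mathrm{Id}$, this reduces to $(R^-)^T=R^{-1}$, i.e., $R^-=R^\vee$, which is exactly the content of Proposition~\ref{prop-3}(iii).

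For part~(ii), combining Proposition~\ref{prop-3}(i) with $R^+=R_\psi$ from Proposition~\ref{prop-3}(iii), together with the identity $P^{(i,i+1)}[\psi_{V\otimes V}^{(21)}]^{(i,i+1)}=[\psi_{V\otimes V}^{(12)}]^{(i,i+1)}P^{(i,i+1)}$, collapses the two $\psi$-factors into a conjugation and yields
\[ \bar\Phi^+(s_iz,y,\lambda) = \bar\Phi^+(z,y,\lambda)\,\tilde\psi_i^{-1}\,S_i(z,y,\lambda)\,\tilde\psi_i, \]
where $\tilde\psi_i := [\psi_{V\otimes V}(\lambda')]^{(i,i+1)}$ with $\lambda' = \lambda - y\sum_{j>i+1}h^{(j)}$. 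Unpacking the definition $\psi_{V\otimes V}(\lambda)^{(12)}=\psi_V(\lambda-yh^{(2)})^{(1)}\psi_V(\lambda)^{(2)}$ shows that $\tilde\psi_i$ is precisely the product of the $i$-th and $(i+1)$-th factors of $\psi_{V^{\otimes n}}(\lambda)=\prod_{j=1}^n\psi_V(\lambda-y\sum_{k>j}h^{(k)})^{(j)}$. Writing $\psi_{V^{\otimes n}}(\lambda) = \tilde\psi_i\cdot X$ with $X$ the product over $j\ne i,i+1$, multiplying the displayed identity on the right by $\psi_{V^{\otimes n}}(\lambda)^{-1}$ and simplifying reduces the claim to the statement that $X$ commutes with $S_i$. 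The factors of $X$ at positions $j>i+1$ act on tensor factors disjoint from $(i,i+1)$ and so commute trivially; those at $j<i$ depend on positions $i,i+1$ only through the sum $h^{(i)}+h^{(i+1)}$, which is preserved by the $\mathfrak h$-linear operator $S_i=R^{(i,i+1)}P^{(i,i+1)}$ (both $R$ and $P$ preserve the total weight on the two positions they act on).

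The main bookkeeping point is the transpose in part~(i): translating between the ``$\bar\Phi^-$ precomposed with $A$'' form of Proposition~\ref{prop-3}(i) and the ``$S_i$ applied to $\omega^-$'' form of the corollary involves taking transposes, and the duality identity $R^-=(R^{-1})^T=R^\vee$ is exactly what compensates for it. The analogous role in part~(ii) is played by the gauge transformation $R^+=R_\psi$, with the observation that the factors of $\psi_{V^{\otimes n}}$ away from positions $i,i+1$ commute through $S_i$ by $\mathfrak h$-linearity providing the final reduction.
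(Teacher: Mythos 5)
Your argument is correct and follows the route the paper intends: Corollary~\ref{cor-1} is stated without proof as an immediate consequence of Proposition~\ref{prop-3}, and your derivation simply makes explicit the two bookkeeping steps involved, namely that the transpose arising from rewriting $\bar\Phi^-(s_iz)=\bar\Phi^-(z)A$ in the $\omega^-$-vector form is undone precisely by $R^-=R^\vee=(R^{-1})^T$, and that the factors of $\psi_{V^{\otimes n}}$ away from positions $i,i+1$ commute with $S_i$ by $\mathfrak h$-linearity. No gaps.
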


\subsection{A geometric representation}\label{ss-3.7}
Let $z_1,\dots,z_n,y,\lambda$ be generic and $w\in\mathbb C$. Recall that we identify $\bar\Theta^+(w,y,\lambda)$ with $V=\mathbb C^2$ via the basis $\omega^+_1$, $\omega^+_0$. Consider the shuffle products\footnote{The compressed notation we are using might be confusing: the map $p_+$ is actually defined on $\oplus_k \bar\Theta^+(w,y,\lambda+(n-2k)y)\otimes\bar\Theta^+_k(z,y,\lambda)$. The identification of the first factor with $V$ depends on $k$ through the $\lambda$-dependence of the basis vectors $\omega^+_i$.}
\begin{gather*}
p_+\colon \ V \otimes \bar\Theta^+(z_1,\dots,z_n,y,\lambda)\to\bar\Theta^+(w,z_1,\dots,z_n,y,\lambda),\\
p_-\colon \ \bar\Theta^+\big(z_1,\dots,z_n,y,\lambda-yh^{(2)}\big)\otimes V \to \bar\Theta^+(w,z_1,\dots,z_n,y,\lambda).
\end{gather*}
Then varying $w$ and denoting $P$ the flip of tensor factors, we get a homomorphism
\begin{gather*}
 \ell(w,y,\lambda)=p_+^{-1}\circ p_-\circ P\in\operatorname{Hom}\big(V\otimes \bar\Theta^+\big(z,y,\lambda-y h^{(1)}\big),V\otimes \bar\Theta^+(z,y,\lambda)\big).
\end{gather*}
By construction it obeys the dynamical Yang--Baxter equation
\begin{gather}
 R^+\big(w_1-w_2,y,\lambda-y h^{(3)}\big)^{(12)}\ell(w_1,y,\lambda)^{(13)} \ell\big(w_2,y,\lambda-y h^{(1)}\big)^{(23)}\notag\\
\qquad{} = \ell(w_1,y,\lambda)^{(23)}\ell\big(w_2,y,\lambda-y h^{(2)}\big)^{(13)} R^+\big(w_1-w_2,y,\lambda-y h^{(3)}\big)^{(12)}\label{e-YBE1}
\end{gather}
in $\operatorname{Hom}\big(V\otimes V\otimes \bar\Theta^+\big(z,y,\lambda-y\big(h^{(1)}+h^{(2)}\big)\big), V\otimes V\otimes \bar\Theta^+(z,y,\lambda)\big)$. By varying $\lambda$ we obtain a~representation of the elliptic dynamical quantum group as follows. Let $(z,y)\in\mathbb C^n\times \mathbb C$ be generic and consider the space $\bar\Theta^+_k(z,y)_{\mathrm{reg}}$ of holomorphic functions $f(t,\lambda)$ on $\mathbb C^k\times \mathbb C$ such that for each fixed $\lambda$, $t\mapsto f(t,\lambda)$ belongs to $\bar\Theta^+(z,y,\lambda)$. It is a module over the ring $\mathcal O(\mathbb C)$ of holomorphic functions of $\lambda$. We set
\begin{gather*}
 \bar\Theta^+_k(z,y)=\bar\Theta^+_k(z,y)_{\mathrm{reg}}\otimes_{\mathcal O(\mathbb C)}\mathbb K.
\end{gather*}
It is a finite dimensional vector space over $\mathbb K$, and for generic $z$, $y$ it has a basis given by weight functions~$\omega^+_I$, $|I|=k$.

\begin{Proposition}\label{prop-7} Let $z_1,\dots,z_n,y$ be generic complex numbers. Then
 \begin{gather*}
 \bar\Theta^+(z_1,\dots,z_n,y)=\oplus_{k=0}^n\bar\Theta^+_k(z_1,\dots,z_n,y).
 \end{gather*}
 is a representation of the elliptic quantum group $E_{\tau,y}(\mathfrak{gl}_2)$ with the $L$-operator
 \begin{gather*}
 L(w)(v\otimes u)=\psi\big(\lambda-yh^{(2)}\big)^{(1)}\ell(w,y,\lambda)\big(\psi(\lambda)^{-1}\big)^{(1)}(v\otimes\tau_{-\mu}^*u),\qquad v\in V_\mu.
 \end{gather*}
 Here $\psi$ is the gauge transformation of Proposition~{\rm \ref{prop-3}}.
\end{Proposition}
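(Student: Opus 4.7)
The plan is to build the $L$-operator of $E_{\tau,y}(\mathfrak{gl}_2)$ from the operator $\ell(w,y,\lambda)$ of Section~\ref{ss-3.7} by conjugating with the gauge transformation $\psi$ of Proposition~\ref{prop-3}, so that the dynamical Yang--Baxter equation \eqref{e-YBE1} already satisfied by $\ell$ and $R^+$ turns into the $RLL$ relation for the original elliptic $R$-matrix of Section~\ref{ss-2.3}. The two inputs driving the argument are the associativity of the shuffle product (Proposition~\ref{prop-0}) and the identification $R^+=R_\psi$ from Proposition~\ref{prop-3}(iii).

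First I would pin down \eqref{e-YBE1} carefully. Given two auxiliary variables $w_1,w_2$ adjoined to $z=(z_1,\dots,z_n)$, there are several ways to identify $\bar\Theta^+(w_1,w_2,z_1,\dots,z_n,y,\lambda)$ with $V\otimes V\otimes\bar\Theta^+(z,y,\lambda')$, obtained by inserting the two $V$-factors on the left or on the right via the shuffle products $p_\pm$. Associativity of the shuffle product makes these identifications compatible up to reorderings of the first two factors, and Proposition~\ref{prop-3}(i) applied to the transposition of $w_1$ and $w_2$ shows that the discrepancy is exactly an $R^+$-matrix. Tracing through the resulting commutative diagram produces \eqref{e-YBE1}, which is an $RLL$ relation for the pair $(\ell,R^+)$.

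Second I would convert this to an $RLL$ relation for $R$. Using $R^+ = \psi_{V\otimes V}^{-1}R\,\psi_{V\otimes V}^{(21)}$ from Section~\ref{ss-2.2}, together with the definition of $L$, the $\psi$-factors built into $L$ cancel those relating $R^+$ to $R$, leaving \eqref{e-YBE1} as the algebraic content of the $RLL$ relation for $R$. Condition (i) of Section~\ref{s-2} — the $\mathbb{K}$-semilinearity $L(w)(u\otimes fv)=(\mathrm{id}\otimes\tau^*_{-\mu}f)L(w)(u\otimes v)$ for $u\in V_\mu$ — follows from the explicit $\tau^*_{-\mu}$-factor in the definition, while preservation of the vanishing condition cutting out $\bar\Theta^+$ from $\Theta^+$ is exactly Proposition~\ref{prop-2}.

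The main obstacle I expect is the bookkeeping of $\lambda$-shifts. The identification of $V$ with $\bar\Theta^+(w,y,\lambda+(n-2k)y)$ depends on the weight grading of the $\bar\Theta^+_k$-factor it is attached to, and the conjugation $\psi(\lambda-yh^{(2)})^{(1)}$ couples the gauge twist of the auxiliary space to the weight of the quantum space. One must verify that all these shifts combine so that the $RLL$ relation holds on the nose (not only up to an overall $\lambda$-translation) and that condition~(i) gives precisely the shift prescribed by the weight $\mu$ of $u$ rather than some twisted version. Once this shift combinatorics is correctly organized, the remainder of the argument is a direct application of Propositions~\ref{prop-0}, \ref{prop-2}, and~\ref{prop-3}.
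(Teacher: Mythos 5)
Your proposal follows the same route as the paper: establish the $RLL$-type relation \eqref{e-YBE1} for $\ell$ with $R^+$ (which the paper obtains ``by construction'' from the shuffle-product definition of $\ell$, exactly as you argue via associativity and Proposition~\ref{prop-3}(i)), then use $R^+=R_\psi$ from Proposition~\ref{prop-3}(iii) to check that the $\psi$-conjugated operator $\hat\ell$ satisfies the $RLL$ relation for $R$ itself. Your additional remarks on the semilinearity condition and on the $\lambda$-shift bookkeeping are correct elaborations of what the paper leaves as ``easy to check.''
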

\begin{proof}The homomorphisms $\ell$ obey the {\em RLL}-type relations \eqref{e-YBE1} with $R$-matrix $R^+$ which, according to Proposition \ref{prop-3}, is obtained from $R$ by the gauge transformation $\psi$. It is easy to check that
\begin{gather*}
 \hat\ell(w,y,\lambda)= \psi\big(\lambda-yh^{(2)}\big)^{(1)}\ell(w,y,\lambda)\big(\psi(\lambda)^{-1}\big)^{(1)}
\end{gather*}
obeys the same relations but with $R^+$ replaced by $R$. It follows that the corresponding difference operators define a~representation of the elliptic dynamical quantum group.
\end{proof}

\begin{Remark} It follows from the previous section that this representation is isomorphic to the tensor product $V(z_{\sigma(1)})\otimes\cdots\otimes V(z_{\sigma(n)})$ for any permutation $\sigma\in S_n$. However this identification with a~tensor product of evaluation vector representations depends on a~choice of ordering of the $z_1,\dots,z_n$, while $\bar\Theta^+(z_1,\dots,z_n,y,\lambda)$ depends as a representation only on the set $\{z_1,\dots, z_n\}$.
\end{Remark}

\subsection{Pairing}
We define a pairing of $\Theta^-_k$ with $\Theta^+_k$, taken essentially from \cite[Appendix~C]{TV2}. Note that the product of a~function in $\Theta^-_k(z,y,\lambda)$ and a function in $\Theta^+_k(z,y,\lambda)$ divided by the products of Jacobi theta functions in part~(2) of Definitions~\ref{def-theta-} and~\ref{def-theta+}, is a function which is doubly periodic in each variable $t_i$ with poles at $t_i=z_a$ and at $t_i=z_a-y$, $a=1,\dots,n$. It can thus be viewed as a meromorphic function on the Cartesian power $E^k$ of the elliptic curve $E=\mathbb C/(\mathbb Z+\tau\mathbb Z)$.
\begin{Definition} Let $z_1\dots,z_n,y\in E$ such that $z_a\neq z_b+jy$ for all $1\leq a,b\leq n$, $1\leq j\leq n-1$, and let $\gamma\in H_1(E\smallsetminus\{z_1,\dots,z_n\})$ be the sum of small circles around $z_a$, $a=1,\dots,n,$ oriented in counterclockwise direction. Let $D\subset E^k$ be the effective divisor $D=\cup_{a=1}^n\cup_{i=1}^k\big(\big\{t\in E^k\colon t_i=z_a\big\}\cup\big\{t\in E^k\colon t_i=z_a-y\big\}\big)$. The symmetric group $S_k$ acts by permutations on the sections of the sheaf $\mathcal O(D)$ of functions on $E^k$ with divisor of poles bounded by $D$. Let $\langle \ \rangle\colon\Gamma\big(E^k,\mathcal O(D)\big)^{S_k}\to \mathbb C$ be the linear form
 \begin{gather*}
 f\to\langle f\rangle= \frac{\theta(y)^k}{(2\pi{\rm i})^kk!} \int_{\gamma^k} f(t_1,\dots,t_k)\prod_{1\leq i\neq j\leq k} \frac{\theta(t_i-t_j)} {\theta(t_i-t_j+y)}{\rm d}t_1\cdots {\rm d}t_k.
 \end{gather*}
For $k=0$ we define $\langle \ \rangle\colon\mathbb C\to\mathbb C$ to be the identity map.
\end{Definition}
\begin{Lemma}\label{lemma-2} Let $f\in\Gamma\big(E^k,\mathcal O(D)\big)^{S_k}$. Then
 \begin{gather*}
 \langle f\rangle=\theta(y)^k\sum_{1\leq i_1<\cdots<i_k\leq n}\mathrm{res}_{t_1=z_{i_1}}\cdots\mathrm{res}_{t_k=z_{i_k}} \bigg(f(t_1,\dots,t_k)\prod_{i\neq j} \frac{\theta(t_i-t_j)} {\theta(t_i-t_j+y)}\bigg).
 \end{gather*}
\end{Lemma}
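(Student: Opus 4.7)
The plan is to apply the residue theorem iteratively to the multi-contour integral
\begin{gather*}
 \int_{\gamma^k} F(t)\,{\rm d}t_1\cdots {\rm d}t_k,\qquad F(t)=f(t)\prod_{1\leq i\neq j\leq k}\frac{\theta(t_i-t_j)}{\theta(t_i-t_j+y)}.
\end{gather*}
Since $\gamma=\sum_{a=1}^n\gamma_a$ is a sum of small counterclockwise loops around the $z_a$, the product cycle $\gamma^k$ splits as a sum over $(a_1,\dots,a_k)\in[n]^k$ of polycircles surrounding $(z_{a_1},\dots,z_{a_k})$, reducing the integral to a sum of iterated one-variable contour integrals.

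First I would verify that inside the polydisk around $(z_{a_1},\dots,z_{a_k})$ the only pole of $F$ is that endpoint. The component of the pole divisor of $f$ along $t_i=z_b-y$ is excluded because the genericity hypothesis $z_a\neq z_b+jy$ with $j=1$ places it outside every disk around a $z_c$. The shuffle factor contributes potential poles along $t_i-t_j=\pm y$, but with $t_i$ near $z_{a_i}$ and $t_j$ near $z_{a_j}$ the same hypothesis rules these out. Hence the one-variable residue theorem applied successively gives
\begin{gather*}
 \int_{\gamma^k}F=(2\pi{\rm i})^k\sum_{(a_1,\dots,a_k)\in[n]^k}\mathrm{res}_{t_1=z_{a_1}}\cdots\mathrm{res}_{t_k=z_{a_k}}F(t).
\end{gather*}

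Next I would show that the terms with $a_i=a_j$ for some $i\neq j$ vanish. A local computation near $t_i=t_j=z_a$ shows that after stripping off the two simple poles of $f$ at $t_i=z_a$ and $t_j=z_a$, the remaining shuffle factor still carries the zero of $\theta(t_i-t_j)$ at $t_i=t_j$; this zero survives both residue operations and forces the iterated residue to be zero. So only strictly distinct tuples $(a_1,\dots,a_k)$ contribute.

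Finally, both $f$ (by assumption) and $\prod_{i\neq j}\theta(t_i-t_j)/\theta(t_i-t_j+y)$ (manifestly) are $S_k$-symmetric, so the iterated residue depends only on the set $\{a_1,\dots,a_k\}$. Regrouping the sum over distinct tuples into $k!$ copies of the sum over $i_1<\cdots<i_k$ cancels the $1/k!$ in the definition of $\langle\ \rangle$, the $(2\pi{\rm i})^k$'s cancel, and the surviving $\theta(y)^k$ yields the stated formula. The main obstacle is the vanishing of repeated-index terms: one must carefully unpack the iterated residue at a point where $f$ has a simple pole in each of two variables simultaneously, and check that the first-order zero $\theta(t_i-t_j)|_{t_i=t_j}=0$ really dominates. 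Everything else is standard repackaging of a multi-contour integral as a sum of residues.
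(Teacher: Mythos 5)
Your proof is correct and follows essentially the same route as the paper's: apply the residue theorem to write $\langle f\rangle$ as a sum of iterated residues over tuples $(a_1,\dots,a_k)\in[n]^k$, kill the repeated-index terms using the zero of $\theta(t_i-t_j)$ at $t_i=t_j$ (which, being of order two in the ordered product over $i\neq j$, dominates the at-most-simple poles of $f$), and use the $S_k$-symmetry of the integrand to restrict to increasing tuples and cancel the $k!$. The paper states these three steps more tersely but identically; your extra care in locating the poles inside the polydisks and in checking the vanishing of the degenerate residues is a faithful expansion of the same argument.
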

\begin{proof} By the residue theorem, $\langle f\rangle$ is a sum of iterated residues at $t_i=z_{a(i)}$ labeled by maps $a\colon [k]\to[n]$. Since $\theta(t_i-t_j)$ vanishes for $t_i=t_j$, only injective maps $a$ contribute non-trivially. Moreover, since the integrand is symmetric under permutations of the variables $t_i$, maps $a$ differing by a~permutation of $\{1,\dots,k\}$ give the same contribution. Thus we can restrict the sum to strictly increasing maps $a$ and cancel the factorial $k!$ appearing in the definition.
\end{proof}
\begin{Definition} Denote $Q=\prod\limits_{i=1}^k\prod\limits_{a=1}^n\theta(t_i-z_a)\theta(t_i-z_a+y)$ and let
 \begin{gather*}
 \langle \ , \ \rangle\colon \ \Theta^-_k(z,y,\lambda)\otimes\Theta^+_k(z,y,\lambda)\to
 \mathbb C
 \end{gather*}
 be the bilinear pairing $ \langle f,g\rangle= \langle fg/Q\rangle $, defined for generic $z\in\mathbb C^n$, $y\in\mathbb C$. Note that $fg/Q$ is an elliptic function of $t_i$ for all~$i$.
\end{Definition}
Here is the explicit formula for the pairing:
\begin{gather}\label{e-pairing}
\langle f,g\rangle=\frac{\theta(y)^k}{(2\pi{\rm i})^kk!}\int_{\gamma^k}\frac{f(t_1,\dots,t_k)g(t_1,\dots,t_k)}{\prod\limits_{i,a}\theta(t_i-z_a)\theta(t_i-z_a+y)}\prod_{i\neq j}\frac{\theta(t_i-t_j)}{\theta(t_i-t_j+y)}
{\rm d}t_1\cdots {\rm d}t_k.
\end{gather}

\begin{Lemma}\label{lemma-3} Let $n=1$. Then $\langle \omega^-_0,\omega^+_0\rangle=1$,
 \begin{gather*}
 \langle \omega^-_1,\omega^+_1\rangle=\theta(\lambda-y) \theta(\lambda),
 \end{gather*}
 and $\langle\omega^-_k,\omega^+_k\rangle=0$ for $k>1$.
\end{Lemma}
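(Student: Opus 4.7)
The plan is to apply Lemma~\ref{lemma-2} to rewrite the pairing \eqref{e-pairing} as a sum of iterated residues, and then exploit the fact that for $n=1$ only very few residues are available. Write
\begin{gather*}
 \langle \omega^-_k,\omega^+_k\rangle = \theta(y)^k\sum_{1\leq i_1<\cdots<i_k\leq n}\mathrm{res}_{t_1=z_{i_1}}\cdots\mathrm{res}_{t_k=z_{i_k}}\left(\frac{\omega^-_k(t)\,\omega^+_k(t)}{Q(t)}\prod_{i\neq j}\frac{\theta(t_i-t_j)}{\theta(t_i-t_j+y)}\right).
\end{gather*}

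For $k>1$ with $n=1$ there are no strictly increasing sequences $i_1<\cdots<i_k$ in $[1]$, so the sum is empty and $\langle\omega^-_k,\omega^+_k\rangle=0$, which gives the third claim. For $k=0$ both theta-function products are empty, so $\omega^\pm_0=1$ and the definition of $\langle\,\rangle$ on $\mathbb C$ as the identity yields $\langle\omega^-_0,\omega^+_0\rangle=1$.

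For $k=1$ only the single residue at $t=z$ survives, the factor $\prod_{i\neq j}$ is empty, and Example~\ref{example-4} gives
\begin{gather*}
 \omega^-_1(t;z,y,\lambda)=\theta(\lambda-t+z-y),\qquad \omega^+_1(t;z,y,\lambda)=\theta(\lambda+t-z),
\end{gather*}
while $Q(t)=\theta(t-z)\theta(t-z+y)$. Using the normalization $\theta'(0)=1$ from \eqref{e-theta}, the residue of $1/\theta(t-z)$ at $t=z$ is $1$, and evaluating the remaining factors at $t=z$ produces $\theta(\lambda-y)\theta(\lambda)/\theta(y)$. Multiplying by the prefactor $\theta(y)$ gives $\langle\omega^-_1,\omega^+_1\rangle=\theta(\lambda-y)\theta(\lambda)$, as claimed.

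There is no real obstacle: the argument is a direct substitution into Lemma~\ref{lemma-2}, and the only point requiring care is tracking the shifts in the theta-function arguments at $t=z$ and correctly using $\theta'(0)=1$ to compute the simple-pole residue.
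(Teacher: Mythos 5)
Your proof is correct and follows essentially the same route as the paper: both reduce the pairing to the residue expansion of Lemma~\ref{lemma-2} and evaluate the single residue at $t=z$ for $k=1$ using $\theta'(0)=1$. The only cosmetic difference is that for $k\geq 2$ you observe the index set $\{1\leq i_1<\cdots<i_k\leq 1\}$ is empty, whereas the paper notes that the residue at $t_1=z$ is regular at $t_i=z$ for $i\geq 2$; these amount to the same fact.
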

\begin{proof} The first claim holds by definition. We have
 \begin{gather*}
 \langle\omega^-_1,\omega^+_1\rangle =\theta(y)\,\mathrm{res}_{t=z}\frac{\theta(\lambda-t+z-y) \theta(\lambda+t-z)}{\theta(t-z)\theta(t-z+y)}{\rm d}t =\theta(\lambda-y) \theta(\lambda).
 \end{gather*}
 For $k\geq 2$, the residue at $t_1=z$ is regular at $t_i=z$ for $i\geq2$ and thus the iterated residue vanishes.
\end{proof}
\begin{Proposition}\label{prop-4}\quad \begin{enumerate}\itemsep=0pt
 \item[$(i)$] The pairing restricts to a non-degenerate pairing $\bar\Theta^-_k(z,y,\lambda) \otimes\bar\Theta^+_k(z,y,\lambda) \to \mathbb C$ for generic $z_1,\dots,z_n,y,\lambda$.
 \item[$(ii)$] In the notation of Proposition~{\rm \ref{prop-2}}, suppose $f_i\!\in\! \bar\Theta^-_{k'_i}(z',y,\lambda+y(n''-2k_i''))$, $g_i\!\in\! \bar\Theta^+_{k''_i}(z'',y,\lambda)$, $i=1,2$ and $k_1'+k_2'=k_1''+k_2''$. Then
 \begin{gather*}
 \langle f_1*f_2,g_1*g_2\rangle= \begin{cases}\langle f_1,g_1\rangle\langle f_2,g_2\rangle, & \text{if $k'_1=k''_1$ and $k'_2=k''_2$}, \\
 0,&\text{otherwise}.
 \end{cases}
 \end{gather*}
 \end{enumerate}
\end{Proposition}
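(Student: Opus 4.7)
My plan is to prove part (ii) by induction on $n=n'+n''$ with base case $n''=1$, and then to deduce part (i) by iterating (ii) down to the atomic level.

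\emph{Base case $n''=1$.} Here $z''=z_n$ and by Example~\ref{example-5} the spaces $\bar\Theta^\pm_{k_2}(z_n,y,\lambda)$ are one-dimensional for $k_2\in\{0,1\}$ and zero for $k_2\geq 2$; so there are four subcases $(k'_2,k''_2)\in\{0,1\}^2$. In $(0,0)$, the shuffles $f_1*c_1=c_1 f_1\prod_j\theta(t_j-z_n+y)$ and $g_1*c_2=c_2 g_1\prod_j\theta(t_j-z_n)$ produce factors that exactly cancel the $z_n$-entries of $1/Q$ in \eqref{e-pairing}; the integrand becomes that of $\langle f_1,g_1\rangle_{z'}$, the contour $\gamma^k$ collapses to $\gamma_{z'}^k$ (no remaining poles at $z_n$), and the pairing equals $c_1c_2\langle f_1,g_1\rangle=\langle f_2,g_2\rangle\langle f_1,g_1\rangle$. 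The subcase $(1,1)$ is handled by an analogous residue analysis: an extra simple pole at $t_l=z_n$ appears for one variable $l$, and the residue there (summed over $l$ by $S_k$-symmetry of the integrand) yields the atomic factor $\langle\omega^-_1,\omega^+_1\rangle_{z_n}=\theta(\lambda)\theta(\lambda-y)$, with the remaining $k-1$ variables giving $\langle f_1,g_1\rangle_{z'}$. The unbalanced subcases $(0,1)$ and $(1,0)$ vanish because the integrand, viewed as a function of the ``extra'' variable once the other $k-1$ residues are taken, is elliptic on $E$ with residues over its enclosed poles summing to zero by the residue theorem.

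\emph{Inductive step $n''\geq 2$.} Split $z''=(\tilde z'',z_n)$; by bilinearity and Proposition~\ref{prop-2} it suffices to treat pure tensors $f_2=\tilde f_2*h$ and $g_2=\tilde g_2*k$ with $\tilde f_2,\tilde g_2\in\bar\Theta^\pm(\tilde z'')$ and $h,k\in\bar\Theta^\pm(z_n)$. Shuffle associativity (Proposition~\ref{prop-0}) gives $f_1*f_2=(f_1*\tilde f_2)*h$ and $g_1*g_2=(g_1*\tilde g_2)*k$. Apply the base case to the outer split $n=(n-1)+1$: the pairing equals $\langle f_1*\tilde f_2,g_1*\tilde g_2\rangle\langle h,k\rangle$ when $|h|=|k|$ and zero otherwise. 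The induction hypothesis on the inner split $n-1=n'+(n''-1)$ gives $\langle f_1*\tilde f_2,g_1*\tilde g_2\rangle=\langle f_1,g_1\rangle\langle \tilde f_2,\tilde g_2\rangle$ in the diagonal case, and reapplying the base case to $\langle f_2,g_2\rangle=\langle \tilde f_2,\tilde g_2\rangle\langle h,k\rangle$ identifies the product as $\langle f_1,g_1\rangle\langle f_2,g_2\rangle$; all off-diagonal cases vanish since at least one factor is zero.

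\emph{Part (i).} Iterating (ii) down to $n$ atomic splits gives, for $I,J\subseteq[n]$ with $|I|=|J|=k$,
\begin{gather*}
\langle \omega^-_I,\omega^+_J\rangle=\prod_{a=1}^n\langle \omega^-_{I\cap\{a\}},\omega^+_{J\cap\{a\}}\rangle_{z_a}
\end{gather*}
with $\lambda$-shifts supplied by the shuffle formula. By Lemma~\ref{lemma-3} each factor is nonzero iff $I\cap\{a\}=J\cap\{a\}$, equal to $1$ (both empty) or $\theta(\lambda^{(a)})\theta(\lambda^{(a)}-y)$ (both $=\{a\}$); the latter is nonzero for generic $(z,y,\lambda)$. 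Hence the weight-function bases of $\bar\Theta^-_k$ and $\bar\Theta^+_k$ are biorthogonal with nonzero diagonal entries and the pairing is non-degenerate. The main obstacle is the vanishing in the unbalanced base subcases $(0,1)$ and $(1,0)$: one must track the cancellations between shuffle factors and $1/Q$ precisely enough to identify the one-variable-residued integrand as an elliptic function whose poles are only at the expected loci, so that the residue theorem on $E$ indeed forces its integral to vanish.
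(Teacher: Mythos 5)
Your overall strategy coincides with the paper's: expand the pairing by the residue formula of Lemma~\ref{lemma-2}, use the factors $\theta(t_l-z_a)$ ($a\le n'$) in $\varphi^-$ and $\theta(t_j-z_b)$ ($b>n'$) in $\varphi^+$ to constrain which shuffles survive the iterated residues, reduce the remaining unbalanced case to a one-variable integral of an elliptic function on $E$, and handle general splittings by associativity and induction on the number of variables peeled off one at a time; part~(i) via biorthogonality of the weight-function bases is exactly how the paper deduces it (Corollary~\ref{cor-2}). The paper packages this slightly differently (a direct support argument for $k_1'\le k_1''$, then a reduction of $k_1'>k_1''$ to the case of difference one by induction on $n$), but your single induction on $n''$ with base case $n''=1$ amounts to the same reduction.

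There is, however, a genuine gap at the one point you defer, and it is not the bookkeeping issue you describe: nowhere in your argument do you invoke the vanishing condition that cuts out $\bar\Theta^\pm_k$ inside $\Theta^\pm_k$, and without it the statement is false --- on the full spaces $\Theta^\pm$ the pairing is only block-triangular, not block-diagonal, so any argument that never uses the bar cannot close. Concretely, of your two unbalanced subcases, $(k_2',k_2'')=(1,0)$ already dies by the counting constraint $k_1''\le s\le k_1'$ (no residue theorem needed), but in the subcase $(0,1)$ the surviving one-variable integrand has, after all cancellations between the shuffle factors and $1/Q$, simple poles both at $t=z_c$ (inside $\gamma$) and at $t=z_c-y$ for certain $c$ (outside $\gamma$). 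The residue theorem on $E$ only says that the sum of \emph{all} residues vanishes, so the integral over $\gamma$ vanishes only once the poles at $t=z_c-y$ are shown to be absent --- and that is supplied not by the shuffle factors but by the hypothesis $f_1\in\bar\Theta^-$, $g_2\in\bar\Theta^+$: since the other arguments of $f_1$ (resp.\ $g_2$) have already been set equal to the relevant $z_c$, the condition $f(\dots,z_c,\dots,z_c-y,\dots)=0$ forces the numerator to vanish at $t=z_c-y$. You need to insert this step explicitly; with it your base case closes and the rest of the induction goes through as you describe.
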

\begin{proof} It is sufficient to prove (ii), since with Lemma~\ref{lemma-3} it implies that, with a proper normalization, weight functions form dual bases with respect to the pairing.

We use Lemma \ref{lemma-2} to compute $\langle f_1*f_2,g_1*g_2\rangle$. Let us focus on the summand in Lemma~\ref{lemma-2} labeled by $i_1<\cdots<i_n$ and suppose $i_s\leq n'<i_{s+1}$. Due to the factor $\theta(t_l-z_a)$ in~$\varphi^-$, see Proposition~\ref{prop-0}, the only terms in the sum over shuffles having nonzero first $s$ residues $\mathrm{res}_{t_1=z_{i_1}}$, \dots, $\mathrm{res}_{t_s=z_{i_s}}$ are those for which $t_1,\dots, t_s$ are arguments of $f_1$. In particular the summand vanishes unless $s\leq k_1'$. Similarly the factors $\theta(t_j-z_b)$ in $\varphi^+$ restrict the sum over shuffles to those terms for which $t_{s+1},\dots,t_k$ are arguments of $g_2$, so that the summand vanishes unless $s\geq k-k_2''=k_2'$. It follows that if $k_1'<k_2'$ then $\langle f_1*f_2,g_1*g_2\rangle$ vanishes and that if $k_1'=k_1''$, the pairing can be computed explicitly as sum over $i_1<\cdots<i_s\leq n'<i_{s+1}<\cdots<i_{k}$, with $s=k_1'$, of terms involving $f_1g_1(z_{i_1},\dots,z_{i_{k_1'}})f_2g_2(z_{i_{k_1'}+1},\dots,z_{i_k})$. The coefficients combine to give $\langle f_1,g_1\rangle\langle f_2,g_2\rangle$.

There remains to prove that the pairing vanishes also if $k_1'>k_2'$. Here is where the vanishing condition comes in. We first consider the case where $k_1'-k_2'=1$ and then reduce the general case to this case.

As above the presence of the vanishing factors in $\varphi^\pm$ imply that the non vanishing residues in Lemma~\ref{lemma-2} are those labeled by $i_1<\cdots<i_k$ such that at least $k_1''$ indices are $\geq n'$ and the corresponding variables $t_i$ are arguments of $g_1$ and at least $k_2'$ indices are $\leq n'$ and the corresponding variables are arguments of $f_2$. If $k_1'-k_2'=1$ there is one variable left and we can write the pairing as a sum of one-dimensional integrals over this variable:
 \begin{gather}\label{e-t-integral}
 I_{A,B}=\int_\gamma \frac{f_1(z_A,t)g_1(z_B)f_2(z_A)g_2(t,z_B)}{h(z_1,\dots,z_n,y,t)}{\rm d}t.
 \end{gather}
Here $z_A=z_{a_1},\dots,z_{a_{k_2'}}$ with $a_i\leq n'$ and $z_B=z_{b_1},\dots,z_{b_{k_1''}}$ with $b_i>n'$. The point is that in $h(z,t)$ several factor cancel and one obtains
\begin{gather*}
 h(z,y,t)=C(z,y)\prod_{c\in \overline {A\cup B}}\theta(t-z_c) \prod_{c\in A\cup B}\theta(t-z_c+y),
 \end{gather*}
for some $t$-independent function $C(z,y)$. Because of the vanishing condition, the integrand in~\eqref{e-t-integral} is actually regular at $t=z_c-y$ and the only poles are at $t=z_c$, $c\in\overline {A\cup B}$. By the residue theorem $I_{A,B}=0$.

Finally, let us reduce the general case to the case where $k_1'-k_2'=1$. We use induction on~$n$. By Lemma~\ref{lemma-3} the pairing vanishes unless $k=1,0$ so there is nothing to prove in this case. Assume that the claim is proved for $n-1$. By Proposition~\ref{prop-2}, we can write $g_1=h_1*m_1$ and $g_2=h_2*m_2$ with $m_i\in\bar\Theta^-_{r_i}(z_{n},y,\lambda)$. By Lemma \ref{lemma-3} we can assume that $r_i\in\{0,1\}$. By the associativity of the shuffle product we can use the result for $k_1'-k_2'=1$ to obtain that the pairing vanishes unless $r_1=r_2$ and
\begin{gather*}
 \langle f_1*g_1,f_2*g_2\rangle=\langle f_1*h_1,f_2*h_2\rangle\langle m_1,m_2\rangle.
 \end{gather*}
By the induction hypothesis, this vanishes unless $k_1'=k_2'$.
\end{proof}

We obtain orthogonality relations for weight functions. To formulate them we introduce some notation. For $I\subset [n]$ and $1\leq j\leq n$ we set
\begin{gather}
 n(j,I)=|\{l\in[n]\,|\,l\in I, l>j\}|,\nonumber\\
 w(j,I)=n(j,I)-n(j,\bar I).\label{eq-w}
\end{gather}
Thus $-w(j,I)$ the sum of the weights of the tensor factors to the right of the $j$-th factor in $v_I$.
 \begin{Corollary}[{cf.~\cite[Theorem~C.4]{TV2}}]\label{cor-2}
 \begin{gather*}
 \langle\omega^-_I,\omega^+_J\rangle = \delta_{I,J}\psi_I(y,\lambda),
 \end{gather*}
 where $\psi_I(y,\lambda)= \prod\limits_{j\in I}\theta(\lambda-(w(j,I)+1)y)\theta(\lambda-w(j,I)y)$.
 \end{Corollary}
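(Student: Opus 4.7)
The plan is to reduce the pairing to the two-factor case already handled in Proposition~\ref{prop-4}(ii), applied recursively by peeling off one tensor factor at a time.

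First I would unwind the definition: $\omega^\pm_I=\bar\Phi^\pm(z,y,\lambda)v_I$ equals an iterated shuffle product $m_1^\pm * m_2^\pm *\cdots * m_n^\pm$, where, setting $k_a := \mathbf{1}_{a\in I}$, the atom $m_a^\pm$ is the generator of the one-dimensional space $\bar\Theta^\pm_{k_a}(z_a,y,\lambda_a)$ from Example~\ref{example-4}, with dynamical shift $\lambda_a = \lambda - y\sum_{b>a}(2k_b-1)$. A direct computation using \eqref{eq-w} identifies this shift with $\lambda - w(a,I)\,y$: since $2k_b-1$ equals $+1$ for $b\in I$ and $-1$ for $b\notin I$, the sum collapses to $n(a,I)-n(a,\bar I)=w(a,I)$.

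Next I would apply Proposition~\ref{prop-4}(ii) inductively on $n$, splitting the shuffle product into the first $n-1$ factors versus the last one. The proposition says the pairing of $f_1*f_2$ with $g_1*g_2$ vanishes unless the weight partitions match on each side and, when they do, factors as $\langle f_1,g_1\rangle\langle f_2,g_2\rangle$. Applied to $\omega^-_I$ and $\omega^+_J$ this forces $n\in I \iff n\in J$ and reduces the pairing to one of tail weight functions in $n-1$ variables, to which the inductive hypothesis applies. Iterating, the pairing vanishes unless $I=J$ index by index, and when $I=J$ it telescopes to
\begin{gather*}
\langle\omega^-_I,\omega^+_I\rangle = \prod_{a=1}^n \langle m_a^-,m_a^+\rangle.
\end{gather*}
For $a\notin I$ both atoms are the constant $1$, contributing $1$; for $a\in I$ the atoms are $\omega^\mp_1$ at $z_a$ with dynamical parameter $\lambda_a=\lambda-w(a,I)y$, and Lemma~\ref{lemma-3} gives $\langle m_a^-,m_a^+\rangle = \theta(\lambda_a-y)\theta(\lambda_a) = \theta(\lambda-(w(a,I)+1)y)\theta(\lambda-w(a,I)y)$. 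Multiplying over $a\in I$ reproduces exactly $\psi_I(y,\lambda)$.

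The only genuinely non-routine step is the bookkeeping of the cumulative dynamical shifts through the iterated shuffle product and their identification with the statistic $w(a,I)$; once this combinatorial identity is in hand, the corollary is an immediate consequence of Proposition~\ref{prop-4}(ii) and Lemma~\ref{lemma-3}.
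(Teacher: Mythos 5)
Your proposal is correct and follows essentially the same route as the paper, which presents the corollary as an immediate consequence of Proposition~\ref{prop-4}(ii) and Lemma~\ref{lemma-3} applied to the iterated shuffle decomposition $\omega^\pm_I=m_1^\pm*\cdots*m_n^\pm$. Your explicit verification that the cumulative dynamical shift $\sum_{b>a}(2k_b-1)$ equals $w(a,I)$ is exactly the bookkeeping the paper leaves implicit, and it correctly produces the factor $\psi_I(y,\lambda)$.
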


 \subsection{Normalized weight functions}\label{ss-3.9}
 By construction the weight functions $\omega^\pm_I$ are entire functions of all variables and obey the vanishing conditions
 \begin{gather*}
 \omega^\pm_I(z_a,z_a-y,t_3,\dots,t_k;z,y,\lambda)=0, \qquad a=1,\dots,n.
 \end{gather*}
 This motivates the following definition.
 \begin{Definition}
 The {\em normalized weight functions} $w_I^\pm$ are the functions
 \begin{gather*}
 w_I^-(t;z,y,\lambda) =\frac{\omega^-_I(t;z,y,\lambda)}{\prod\limits_{1\leq j\neq l\leq k}\theta(t_j-t_l+y)},\\
 w_I^+(t;z,y,\lambda) = \frac{\omega^+_I(t;z,y,\lambda)}{\psi_I(y,\lambda) \prod\limits_{1\leq j\neq l\leq k}\theta(t_j-t_l+y)}.
 \end{gather*}
 \end{Definition}
 \begin{Remark} The factor $1/\psi_I$, defined in Corollary \ref{cor-2}, simplifies the orthogonality relations and the action of the permutations of the $z_i$ at the cost of introducing poles at $\lambda+y\mathbb Z$ modulo
 $\mathbb Z+\tau\mathbb Z$.
 \end{Remark}
 Let $I=\{i_1,\dots,i_k\}\subset[n]$ and $f(t_1,\dots,t_k)$ a symmetric function of $k$ variables. We write $f(z_I)$ for $f(z_{i_1},\dots,z_{i_k})$.
 \begin{Lemma}\label{lemma-4} For each $I,J\subset[n]$ such that $|I|=|J|$, the weight functions $w_I^-(z_J;z,y,\lambda)$ and $\psi_I(y,\lambda) w_I^+(z_J;z,y,\lambda)$ are entire functions of $z$, $y$, $\lambda$.
 \end{Lemma}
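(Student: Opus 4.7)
For $k=|I|=|J|\leq 1$ the empty product makes the statement trivial; assume $k\geq 2$ and write $J=\{j_1<\cdots<j_k\}$. Substituting $t_m=z_{j_m}$ into the definitions of $w_I^\pm$ gives
\begin{gather*}
 w_I^-(z_J;z,y,\lambda)=\frac{\omega_I^-(z_J;z,y,\lambda)}{\prod_{1\leq m\neq l\leq k}\theta(z_{j_m}-z_{j_l}+y)},\\
 \psi_I(y,\lambda)w_I^+(z_J;z,y,\lambda)=\frac{\omega_I^+(z_J;z,y,\lambda)}{\prod_{1\leq m\neq l\leq k}\theta(z_{j_m}-z_{j_l}+y)}.
\end{gather*}
Since $\omega_I^\pm\in\bar\Theta^\pm_k(z,y,\lambda)$ are entire functions of all variables, the plan is to show that the numerators vanish along the zero divisor of each factor $\theta(z_{j_m}-z_{j_l}+y)$. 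Entireness of the quotient then follows factor by factor from Riemann's removable singularity theorem, using that distinct factors give pairwise coprime entire functions with simple zeros on smooth hyperplanes.

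Fix a pair $m\neq l$. The zero set of $\theta(z_{j_m}-z_{j_l}+y)$ is the union over $r,s\in\mathbb Z$ of the smooth hyperplanes $H^{r,s}=\{z_{j_m}-z_{j_l}+y=r+s\tau\}$. Thanks to the $S_k$-symmetry of $\omega_I^\pm$, the vanishing clause in the definition of $\bar\Theta^\pm_k$ can be rephrased as: $\omega_I^\pm(t_1,\dots,t_k;z,y,\lambda)$ vanishes whenever any two of its $t$-arguments take the values $z_a$ and $z_a-y$ for some $a\in[n]$, the remaining $k-2$ arguments being arbitrary. Applying this with $a=j_l$, placing $z_{j_l}-y$ in position $m$ and $z_{j_l}$ in position $l$, and specializing the remaining arguments to $t_i=z_{j_i}$, one obtains the vanishing of $\omega_I^\pm(z_J;z,y,\lambda)$ on the fundamental component $H^{0,0}$.

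To extend the vanishing to the lattice translates $H^{r,s}$ with $(r,s)\neq(0,0)$, I invoke the quasi-periodicity of $\omega_I^\pm$ in each $t$-variable built into Definitions~\ref{def-theta-} and~\ref{def-theta+}: the shift $t_m\mapsto t_m+r+s\tau$ multiplies $\omega_I^\pm$ by a nowhere-vanishing scalar factor (coming from the transformation of $\prod_a\theta(t_m-z_a)$, respectively $\prod_a\theta(t_m-z_a+y)$, together with the automorphy factor of the doubly periodic quotient $g$). Propagating the previous vanishing through this shift, $\omega_I^\pm$ still vanishes when position $m$ equals $z_{j_l}-y+r+s\tau$ and position $l$ equals $z_{j_l}$; since on $H^{r,s}$ one has exactly $z_{j_m}=z_{j_l}-y+r+s\tau$, the specialization $t_i=z_{j_i}$ forces $\omega_I^\pm(z_J;z,y,\lambda)=0$ on $H^{r,s}$. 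The main subtlety of the argument is precisely this step: the vanishing condition directly controls only the fundamental divisor $H^{0,0}$, and one must propagate it to all lattice translates before the removable-singularity argument can kill the denominator globally.
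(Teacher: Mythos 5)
Your proof is correct and follows essentially the same route as the paper's: the paper's one-line argument likewise deduces from the vanishing condition and the $S_k$-symmetry of $\omega_I^\pm$ that the numerator is divisible by each factor $\theta(z_{j_m}-z_{j_l}+y)$, and then concludes by regularity of the quotient. The only difference is presentational: you spell out the quasi-periodicity step that propagates the vanishing from the hyperplane $z_{j_m}=z_{j_l}-y$ to all of its lattice translates, which the paper subsumes in the word ``divisible''.
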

 \begin{proof} The vanishing condition implies that $\omega_I^\pm(z_a,z_b,t_3,\dots)$ is divisible by $\theta(z_b-z_a+y)$ so that the quotient by $\theta(t_2-t_1+y)$ is regular at $z_b=z_a-y$ after substitution $t_1=z_a$, $t_2=z_b$. Since $\omega_I^\pm$ is a symmetric function, the same holds for any other pair $t_j$, $t_l$.
 \end{proof}
 The orthogonality relations become:
 \begin{Proposition}[cf.~\cite{RTV,RTV2,RV2}]\label{p-ortho} Let $I,J\subset[n]$, $|I|=|J|=k$. The normalized weight functions obey the orthogonality relations
 \begin{gather*}
 \sum_{K}\frac{w^-_I(z_K,z,y,\lambda)w^+_J(z_K,z,y,\lambda)} {\prod\limits_{a\in K}\prod\limits_{b\in\bar K}\theta(z_a-z_b)\theta(z_a-z_b+y)} =\delta_{I,J}.
 \end{gather*}
 The summation is over subsets $K\subset[n]$ of cardinality $|K|=k$.
\end{Proposition}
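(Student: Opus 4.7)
The plan is to deduce Proposition \ref{p-ortho} by expanding the pairing formula \eqref{e-pairing} of $\langle\omega^-_I,\omega^+_J\rangle$ as a sum of iterated residues via Lemma \ref{lemma-2}, and then comparing with the explicit evaluation given in Corollary \ref{cor-2}.

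First, I would substitute the definitions of the normalized weight functions, writing $\omega^-_I = w^-_I \prod_{j \neq l}\theta(t_j-t_l+y)$ and $\omega^+_J = \psi_J(y,\lambda)\, w^+_J \prod_{j \neq l}\theta(t_j-t_l+y)$, into the pairing integrand. The squared product $\prod_{j \neq l}\theta(t_j-t_l+y)^2$ combines with the kernel $\prod_{i\neq j}\theta(t_i-t_j)/\theta(t_i-t_j+y)$ present in \eqref{e-pairing} to leave $\prod_{i\neq j}\theta(t_i-t_j+y)\theta(t_i-t_j)$ in the numerator. By Lemma \ref{lemma-2} this becomes a sum of iterated residues indexed by subsets $K=\{k_1<\cdots<k_k\}\subset[n]$, at the points $t_i=z_{k_i}$.

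Next I would evaluate each residue. The simple pole of $1/\theta(t_i-z_{k_i})$ contributes residue $1$ since $\theta'(0)=1$. The remaining $\theta$-factors split as follows: from $\prod_{i,a}\theta(t_i-z_a)$, the $a\neq k_i$ terms split into the $K$--$K$ part $\prod_{i\neq j}\theta(z_{k_i}-z_{k_j})$ (which cancels the $\prod_{j\neq l}\theta(t_j-t_l)$ factor from the numerator) times the $K$--$\bar K$ part $\prod_{a\in K,b\in\bar K}\theta(z_a-z_b)$; from $\prod_{i,a}\theta(t_i-z_a+y)$, the diagonal $a=k_i$ contributes $\theta(y)^k$, the $K$--$K$ off-diagonal $\prod_{i\neq j}\theta(z_{k_i}-z_{k_j}+y)$ cancels the remaining numerator factor $\prod_{j\neq l}\theta(t_j-t_l+y)$ after substitution, and one is left with $\prod_{a\in K,b\in\bar K}\theta(z_a-z_b+y)$. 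The factor $\theta(y)^k$ from the diagonal cancels exactly the prefactor $\theta(y)^k$ in Lemma \ref{lemma-2}, yielding
\begin{gather*}
\langle\omega^-_I,\omega^+_J\rangle = \psi_J(y,\lambda)\sum_{K}\frac{w^-_I(z_K;z,y,\lambda)\,w^+_J(z_K;z,y,\lambda)}{\prod_{a\in K}\prod_{b\in\bar K}\theta(z_a-z_b)\theta(z_a-z_b+y)}.
\end{gather*}

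Combining this with Corollary \ref{cor-2}, $\langle\omega^-_I,\omega^+_J\rangle = \delta_{I,J}\psi_I(y,\lambda)$, and dividing by $\psi_J(y,\lambda)$ (which is generically nonzero), the identity of the proposition follows: for $I=J$ one has $\psi_I/\psi_J=1$, and for $I\neq J$ the sum vanishes. The proof requires no new ideas beyond those already in place — the only real work is careful bookkeeping of the various theta-function factors in the iterated residues, which I expect to be the main (purely mechanical) obstacle. As in the analogous rational/trigonometric statements of \cite{RTV,RTV2,RV2} cited with the proposition, the elegance of the final formula is a consequence of the precise normalization chosen for $w^\pm_I$.
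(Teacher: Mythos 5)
Your proof is correct and is exactly the paper's argument: the paper's proof of Proposition~\ref{p-ortho} is the one-line statement that it is ``a rewriting of Corollary~\ref{cor-2} by using Lemma~\ref{lemma-2}'', and your residue bookkeeping simply fills in the details of that rewriting. The theta-factor cancellations you trace through all check out.
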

\begin{proof} This is a rewriting of Corollary \ref{cor-2} by using Lemma~\ref{lemma-2}.
\end{proof}

We will also use the orthogonality relations in the following equivalent form.
\begin{Corollary} \label{p-ortho2} Let $I,K\subset [n]$, $|I|=|K|=k$. We have
\begin{gather*}
\sum_J w^-_J(z_I,z,y,\lambda) w_J^+(z_K,z,y,\lambda)=
\begin{cases} \prod\limits_{a\in I,\,b\in\bar I}\theta(z_a-z_b)\theta(z_a-z_b+y),& $I=K$,\\
0,& \text{otherwise}.
\end{cases}
\end{gather*}
\end{Corollary}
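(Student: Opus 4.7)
The key observation is that Corollary \ref{p-ortho2} is formally equivalent to Proposition \ref{p-ortho} by transposition: Proposition \ref{p-ortho} asserts that two finite square matrices are inverses of one another, hence they are also inverses in the opposite order.

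More precisely, the plan is the following. Fix $k$ and index matrices by subsets of $[n]$ of cardinality $k$. Define
\begin{gather*}
A_{I,K} = w^-_I(z_K,z,y,\lambda),\qquad
B_{K,J} = \frac{w^+_J(z_K,z,y,\lambda)}{\prod\limits_{a\in K}\prod\limits_{b\in\bar K}\theta(z_a-z_b)\theta(z_a-z_b+y)}.
\end{gather*}
Then Proposition \ref{p-ortho} reads exactly $\sum_K A_{I,K}B_{K,J}=\delta_{I,J}$, i.e., $AB=\mathrm{Id}$. Since $A$ and $B$ are finite square matrices (of size $\binom{n}{k}$) with entries in the field of meromorphic functions of $(z,y,\lambda)$, the relation $AB=\mathrm{Id}$ forces $BA=\mathrm{Id}$ as well.

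Writing the identity $BA=\mathrm{Id}$ out componentwise gives
\begin{gather*}
\sum_J \frac{w^+_J(z_K,z,y,\lambda)}{\prod\limits_{a\in K}\prod\limits_{b\in\bar K}\theta(z_a-z_b)\theta(z_a-z_b+y)}\, w^-_J(z_I,z,y,\lambda)=\delta_{I,K},
\end{gather*}
which, after clearing the denominator (note that for $I=K$ the denominator is precisely the product in the statement of the corollary), becomes exactly the claimed identity.

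There is essentially no obstacle: the only subtlety is to verify that the matrices are genuinely square and that one may invoke finite-dimensional linear algebra. This is guaranteed by Proposition \ref{p-basis}, which says that for generic $(z,y,\lambda)$ the weight functions $\omega^\pm_I$ (hence their normalized versions $w^\pm_I$) form bases of $\bar\Theta^\pm_k$, both of dimension $\binom{n}{k}$. Since the identity to prove is an identity of meromorphic functions of $(z,y,\lambda)$, it suffices to check it for generic parameters, where the matrix argument applies.
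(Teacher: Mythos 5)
Your argument is correct and is essentially the paper's own proof: the paper likewise packages Proposition \ref{p-ortho} as the statement that a matrix built from $w^-$ is a left inverse of one built from $w^+$ (splitting the denominator between the two factors, which is immaterial) and concludes that it is also a right inverse. Your additional remark that genericity via Proposition \ref{p-basis} justifies working over the field of meromorphic functions is a fine, if implicit in the paper, bit of bookkeeping.
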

\begin{proof}Let
\begin{gather*}
x_{IK} =\frac{w^-_I(z_K,z,y,\lambda)}{\prod\limits_{a\in K, \, b\in \bar{K}} \theta(z_a-z_b)}, \qquad
y_{KJ} =\frac{w^+_J(z_K,z,y,\lambda)}{\prod\limits_{a\in K, \, b\in \bar{K}} \theta(z_a-z_b+y)}.
\end{gather*}
Proposition \ref{p-ortho} claims that the matrix $(x_{IK})_{I,K}$ is the left inverse of the matrix $(y_{KJ})_{K,J}$. This implies, however, that the matrix $(x_{IK})_{I,K}$ is also a right inverse of $(y_{KJ})_{K,J}$, which is equivalent to the statement of the corollary.
 \end{proof}

Weight functions have a triangularity property. Introduce a partial ordering on the subsets of $[n]$ of fixed cardinality $k$: if $I=\{i_1<\dots<i_k\}$ and $J=\{j_1<\dots<j_k\}$, then $I\leq J$ if and only if $i_1\leq j_1$, \dots, $i_k\leq j_k$.
\begin{Lemma}\label{lemma-5} Let $\epsilon\colon[n]^2\to\{0,1\}$ be such that
 \begin{gather*}
 \epsilon(a,b)=\begin{cases}1,&\text{if $a>b$},\\
 0,&\text{if $a<b$.}
 \end{cases}
 \end{gather*}
 Then
 \begin{enumerate}\itemsep=0pt
 \item[$(i)$] $w_I^-(z_J;z,y,\lambda)$ vanishes unless $J\leq I$ and
 \begin{gather*}
 w_I^-(z_I;z,y,\lambda)=\prod_{a\in I}\theta(\lambda-(w(a,I)+1)y)\prod_{a\in I,\, b\in \bar I}\theta(z_a-z_b+\epsilon(b,a)y).
 \end{gather*}
 \item[$(ii)$] $w_I^+(z_J,z,y,\lambda)$ vanishes unless $I\leq J$ and
 \begin{gather*}
 w_I^+(z_I;z,y,\lambda)=\frac {\prod\limits_{a\in I,\,b\in \bar I}\theta(z_a-z_b+\epsilon(a,b)y)} {\prod\limits_{a\in I}\theta(\lambda-(w(a,I)+1)y)}.
 \end{gather*}
 \end{enumerate}
\end{Lemma}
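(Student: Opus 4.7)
The plan is to prove Lemma~\ref{lemma-5} by induction on $n$, using the shuffle-product recursion that follows from Proposition~\ref{prop-0}. Splitting off the last factor of the iterated shuffle $\bar\Phi^-$, one obtains: if $n\notin I$ then
\begin{gather*}
\omega_I^-(t;z_1,\dots,z_n,y,\lambda) = \omega_I^-(t;z_1,\dots,z_{n-1},y,\lambda+y) \prod_{l=1}^k \theta(t_l-z_n+y),
\end{gather*}
while if $n\in I$ with $I=I'\sqcup\{n\}$, then $\omega_I^- = \omega_{I'}^-*\omega^-_1(\,\cdot\,;z_n,y,\lambda)$, given explicitly by the symmetrization formula of Proposition~\ref{prop-0}. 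I would then case-split according to whether $n\in I$ and $n\in J$.

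In the case $n\notin I$, $n\notin J$, the recursion reduces directly to $(n-1)$ variables, and for $J=I$ the product $\prod_l\theta(z_{i_l}-z_n+y)$ supplies exactly the $b=n$ factors of the diagonal formula. In the case $n\in I$, $n\in J$, I would use the symmetry of $\omega_I^-$ to place the distinguished variable at $z_n$ in the Sym; the factor $\prod_{a=1}^{n-1}\theta(t_k-z_a)$ appearing in $\varphi^-$ then forces the remaining $t$'s to avoid $z_1,\dots,z_{n-1}$, collapsing the $k$-term Sym to a single contribution that again reduces to $(n-1)$ variables (on $\omega_{I'}^-$ evaluated at $z_{J\setminus\{n\}}$). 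The two mixed cases are handled by combining the inductive hypothesis with the explicit form of the $\varphi^-$-factors: in each case either all terms of the Sum vanish by the induction, or the algebra forces one of the positional inequalities defining the partial order.

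The diagonal value $w_I^-(z_I;\cdot)$ is assembled by tracking, at each step of the recursion, the one surviving Sym term: the factor $\theta(\lambda-(w(a,I)+1)y)$ for $a\in I$ comes from the evaluation $\omega^-_1(z_a;z_a,y,\lambda_a)=\theta(\lambda_a-y)$ at the step when position $a$ is introduced, and the cumulative shift $\lambda_a = \lambda - y\sum_{b>a}(2k_b-1)$ produces exactly $-w(a,I)\,y$; the factor $\theta(z_a-z_b+\epsilon(b,a)y)$ arises from $\theta(t_l-z_b)$ when $b<a$ (the $n'$-part of $\varphi^-$) and from $\theta(t_l-z_b+y)$ when $b>a$ (the $n''$-part), matching $\epsilon$. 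Part (ii) for $\omega_I^+$ is proved by the parallel argument with $\varphi^+$ in place of $\varphi^-$ (alternatively via Proposition~\ref{p-duality}); the normalization by $1/\psi_I(y,\lambda)$ in the definition of $w_I^+$ inverts the product $\prod_{a\in I}\theta(\lambda-(w(a,I)+1)y)\theta(\lambda-w(a,I)y)$, producing the stated quotient.

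The main obstacle is the bookkeeping in the mixed cases $n\in I$, $n\notin J$ and $n\notin I$, $n\in J$, where the induction does not collapse the Sym to a single term and the vanishing must be read off from a cancellation. A clean way around this is to strengthen the induction hypothesis to a product formula for $\omega_I^-(z_J;\cdot)$ (expressible as a single term, up to signs, whenever nonzero), as done in the rational and trigonometric analogues in \cite{RTV,RV2}; from such a formula, both the vanishing pattern and the diagonal expression can be read directly, and the passage through the shuffle recursion is reduced to comparing two explicit products of theta functions.
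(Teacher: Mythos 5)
The paper states Lemma~\ref{lemma-5} without proof, so there is no argument of the authors to compare yours against; the intended justification is evidently the closed formula of Section~\ref{ss-expli}, from which the whole lemma follows in a few lines. Indeed, in the term of $\operatorname{Sym}$ indexed by $\pi\in S_k$, evaluated at $t_r=z_{j_{\pi(r)}}$, the factors $l^-_I(r,a)=\theta(t_r-z_a)$ with $a<i_r$ kill the term as soon as $j_{\pi(r)}<i_r$; so a nonzero term requires $j_{\pi(r)}\ge i_r$ for all $r$, hence by a standard sorting argument $i_m\le j_m$ for all $m$, and at $J=I$ only $\pi=\mathrm{id}$ survives, the denominator $\prod_{r<s}\theta(t_s-t_r)\theta(t_r-t_s+y)$ cancelling against the factors $l^-_I(r,i_s)$ to leave the stated product. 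Your inductive shuffle strategy is sound in outline, and your bookkeeping for the diagonal value (the shift $\lambda-y\sum_{b>a}(2k_b-1)=\lambda-y\,w(a,I)$ and the provenance of the $\epsilon$-factors) is correct. But the mixed case $n\notin I$, $n\in J$ is a genuine gap, not mere bookkeeping: peeling off the last factor gives $\omega_I^-(z_J)=\omega_I^-(z_{J'},z_n;z',y,\lambda+y)\,\theta(y)\prod_{l<k}\theta(z_{j_l}-z_n+y)$ with $J'=J\smallsetminus\{n\}$, and the first factor is the $(n-1)$-point weight function evaluated at a point containing $z_n$, which is generic with respect to $z_1,\dots,z_{n-1}$. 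The induction hypothesis, which only covers evaluations at $z_{J''}$ with $J''\subset[n-1]$, says nothing about such partial evaluations, and no term of any $\operatorname{Sym}$ vanishes for free here. The fix you gesture at -- strengthening the induction to a closed product formula -- is exactly Section~\ref{ss-expli}, at which point the induction becomes superfluous.

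Before writing this up, note also that carrying out either argument yields $w_I^-(z_J)=0$ unless $I\le J$ and $w_I^+(z_J)=0$ unless $J\le I$, i.e., the inequalities \emph{opposite} to the ones printed in the statement. This orientation is the one consistent with Example~\ref{example-6} (where $\omega^-_{\{a\}}(z_c)$ vanishes precisely for $c<a$), with the use of the lemma in the proof of Theorem~\ref{th-1}, and with the Appendix, which quotes part $(ii)$ as saying that the restriction of $c^*w_I^+$ to $Y_J$ is $0$ unless $J\le I$. The statement as printed thus contains a typo, and your sketch, if completed, would (correctly) prove the reversed inequalities rather than the ones written.
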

\subsection{Eigenvectors of the Gelfand--Zetlin algebra}\label{ss-3.10}
The normalized weight functions $w^-_I$ evaluated at $z_J$ provide the (triangular) transition matrix between the standard basis of $\big(\mathbb C^2\big)^{\otimes n}$ and a basis of eigenvectors of the Gelfand--Zetlin
algebra. The Gelfand--Zetlin algebra is generated by the determinant $\Delta(w)$, see~\eqref{e-determinant}, and $L_{22}(w)$. The determinant acts by multiplication by
\begin{gather*}
 \prod_{i=1}^n\frac{\theta(w-z_i+y)}{\theta(w-z_i)}.
\end{gather*}
 We thus need to diagonalize $L_{22}(w)$.
\begin{Lemma}\label{lemma-6} Let $0\leq k\leq n$, $[k]=\{1,\dots,k\}$. Then
 \begin{gather*}
 \xi_{[k]}= \prod_{i=1}^k\theta(\lambda+(n-k-i)y)v_{[k]}\in V(z_1)\otimes\cdots\otimes V(z_n)
 \end{gather*}
 is an eigenvector of $L_{22}(w)$ with eigenvalue
 \begin{gather*}
 \prod_{a=1}^k\frac{\theta(w-z_a)}{\theta(w-z_a-y)}.
 \end{gather*}
\end{Lemma}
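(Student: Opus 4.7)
The plan is to apply $L_{22}(w)$ to $v_{[k]}$ directly using the product formula \eqref{e-tensor} for the $L$-operator on a tensor product of evaluation representations, and then to account for the prefactor $f(\lambda):=\prod_{i=1}^k\theta(\lambda+(n-k-i)y)$ via the dynamical shift rule $L_{22}(w)f(\lambda)=f(\lambda+y)L_{22}(w)$. The key structural point is that in $v_{[k]}$ all $v_1$'s precede all $v_2$'s; this spatial ordering makes $v_{[k]}$ ``pseudo-highest-weight'' in the sense that only a single path will contribute to the expansion of $L_{22}(w)v_{[k]}$.

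To extract $L_{22}(w)v_{[k]}$ I would apply the factors $R^{(0,n)},R^{(0,n-1)},\dots,R^{(0,1)}$ in this order to $v_2^{(0)}\otimes v_{[k]}$ and track the auxiliary state. At sites $a>k$ the physical state is $v_2$, so $R(v_2\otimes v_2)=v_2\otimes v_2$ leaves the auxiliary unchanged. At sites $a\le k$ the physical state is $v_1$; the off-diagonal term of $R(v_2\otimes v_1)$ would flip the auxiliary to $v_1$, but after such a flip every subsequent $R$-matrix acts on a $v_1\otimes v_1$ pair (since all positions $1,\dots,a-1$ carry $v_1$) and hence cannot restore the auxiliary to $v_2$. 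Thus the only contribution to $L_{22}$ comes from the fully diagonal path, in which the auxiliary stays $v_2$ throughout.

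At site $a\in[k]$ the effective dynamical parameter is $\lambda-y\sum_{i=a+1}^nh^{(i)}=\lambda+(n+a-2k)y$, coming from the weights of the still-unchanged positions $a+1,\dots,k$ (carrying $v_1$) and $k+1,\dots,n$ (carrying $v_2$). Using $\theta(-x)=-\theta(x)$ and the definition of $\alpha$, the diagonal coefficient at site $a$ is
\begin{gather*}
 \alpha\bigl(w-z_a,y,-\lambda-(n+a-2k)y\bigr)=\frac{\theta(w-z_a)\,\theta(\lambda+(n+a-2k-1)y)}{\theta(w-z_a-y)\,\theta(\lambda+(n+a-2k)y)}.
\end{gather*}
Taking the product over $a=1,\dots,k$, the $\lambda$-dependent numerator and denominator telescope, yielding
\begin{gather*}
 L_{22}(w)v_{[k]}=\prod_{a=1}^k\frac{\theta(w-z_a)}{\theta(w-z_a-y)}\cdot\frac{\theta(\lambda+(n-2k)y)}{\theta(\lambda+(n-k)y)}\,v_{[k]}.
\end{gather*}

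To conclude I would apply the shift rule to $\xi_{[k]}=f(\lambda)v_{[k]}$. A direct telescoping of $f$ gives $f(\lambda+y)/f(\lambda)=\theta(\lambda+(n-k)y)/\theta(\lambda+(n-2k)y)$, which is exactly the reciprocal of the residual $\lambda$-factor above; the two cancel and produce the claimed eigenvalue $\prod_{a=1}^k\theta(w-z_a)/\theta(w-z_a-y)$. The main obstacle is the careful bookkeeping of dynamical shifts at each step and of the sign identities that arise from evaluating $\alpha$ at negative arguments; the specific form of $f$ is in fact dictated by the requirement that its telescoping ratio cancel the one produced by the intrinsic calculation, which also illuminates why the lemma singles out the particular configuration $v_{[k]}$.
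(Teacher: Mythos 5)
Your proof is correct and is essentially the paper's own argument written out in full: the paper likewise observes that $L_{21}(w)v_1=0=L_{12}(w)v_2$ forces only the diagonal path to contribute, so that $L_{22}(w)$ acts on $v_{[k]}$ as the product of single-site diagonal factors with the appropriate dynamical shifts, and the prefactor is exactly what makes the telescoping $\lambda$-ratios cancel. Your bookkeeping of the shift $\lambda+(n+a-2k)y$ at site $a$ and the resulting residual ratio $\theta(\lambda+(n-2k)y)/\theta(\lambda+(n-k)y)$ is the paper's ``straightforward calculation'', and your reading of $v_{[k]}$ as $v_1^{\otimes k}\otimes v_2^{\otimes n-k}$ agrees with the convention actually used in the paper's proof.
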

\begin{proof} (See \cite{GRTV,RTV2,RV2}.) Since $L_{21}(w)v_1=0=L_{12}(w)v_2$, the action of $L_{22}(w)$ on $v_1^{\otimes k}\otimes v_2^{\otimes n-k}$ is simply the product of the action on all factors, with the appropriate shift of $\lambda$. Since $L_{22}(w)$ acts diagonally in the basis $v_1$, $v_2$ one gets the result by straightforward calculation.
\end{proof}

For $I\subset[n]$, $|I|=k$, define
\begin{gather}\label{e-xi}
 \xi_I=\xi_I(z,y,\lambda)= \sum_{|J|=k}\frac{w^-_J(z_I,z,y,\lambda)}{\prod\limits_{a\in I,\, b\in\bar I}\theta(z_a-z_b+y)} v_J.
\end{gather}
By Lemma \ref{lemma-5} this definition is consistent with the one for
$\xi_{[k]}$ above.
\begin{Proposition}[{cf.~\cite{GRTV,RTV2,RV2}}]\label{prop-5} The vectors $\xi_I$, $I\subset[n]$, $|I|=k$ form a basis of eigenvectors of the operators of the Gelfand--Zetlin algebra on $V(z_1)\otimes\cdots\otimes V(z_n)$:
 \begin{gather*}
 \Delta(w)\xi_I=\prod_{a=1}^n\frac{\theta(w-z_a+y)}{\theta(w-z_a)} \xi_I, \qquad L_{22}(w)\xi_I= \prod_{a\in I}\frac{\theta(w-z_a)}{\theta(w-z_a-y)} \xi_I.
 \end{gather*}
\end{Proposition}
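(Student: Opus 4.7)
My approach handles the two eigenvalue statements separately. For the quantum determinant, I would exploit that $\Delta(w)$ is group-like with respect to the coalgebra structure of $E_{\tau,y}(\mathfrak{gl}_2)$ encoded by the \emph{RLL} relations. A direct computation using Example~\ref{example-2}, combined with a standard Fay-type theta identity, shows that on the vector evaluation representation $V(z)$ the operator $\Delta(w)$ acts as multiplication by the scalar $\theta(w-z+y)/\theta(w-z)$. Multiplicativity of the quantum determinant on tensor products then gives that on $V(z_1) \otimes \cdots \otimes V(z_n)$ the operator $\Delta(w)$ is the scalar $\prod_{a=1}^n \theta(w-z_a+y)/\theta(w-z_a)$, so every vector---in particular each $\xi_I$---is automatically an eigenvector with the stated eigenvalue.

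For $L_{22}(w)$, the base case $I=[k]$ is Lemma~\ref{lemma-6}. For a general $I = \{i_1 < \cdots < i_k\} \subset [n]$ I would transport this via the isomorphisms of Section~\ref{ss-2.1}. Let $\sigma \in S_n$ be the permutation with $\sigma(a) = i_a$ for $a \leq k$ and $\sigma(k+1) < \cdots < \sigma(n)$ enumerating $\bar I$ in increasing order. Fix a reduced decomposition $\sigma = s_{j_r} \cdots s_{j_1}$ and let $S_\sigma$ be the corresponding composition of the operators~\eqref{e-Si}. This gives an $E_{\tau,y}(\mathfrak{gl}_2)$-module isomorphism $V(z_1) \otimes \cdots \otimes V(z_n) \to V(z_{\sigma(1)}) \otimes \cdots \otimes V(z_{\sigma(n)})$. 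In the target module, Lemma~\ref{lemma-6} produces the eigenvector $\xi^\sigma_{[k]} = \prod_{i=1}^k \theta(\lambda + (n-k-i)y)\, v_{[k]}$ of $L_{22}(w)$, with eigenvalue $\prod_{a=1}^k \theta(w - z_{\sigma(a)})/\theta(w - z_{\sigma(a)} - y) = \prod_{a \in I}\theta(w - z_a)/\theta(w - z_a - y)$. Its pullback $\tilde\xi_I = S_\sigma^{-1}\xi^\sigma_{[k]}$ is then an eigenvector of $L_{22}(w)$ on $V(z_1) \otimes \cdots \otimes V(z_n)$ with the same eigenvalue.

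The remaining step, and the main technical one, is to identify $\tilde\xi_I$ with $\xi_I$ as defined in~\eqref{e-xi}. For this I would iterate Corollary~\ref{cor-1}(i) to obtain the vector-valued identity $\omega^-(t;z,y,\lambda) = S_\sigma(z,y,\lambda)\,\omega^-(t;\sigma z,y,\lambda)$, then specialize the argument $t$ to $z_I$ (which equals $(\sigma z)_{[k]}$ in the permuted variables) and divide by the normalization $\prod_{a \in I,\, b \in \bar I}\theta(z_a - z_b + y)$. The resulting coefficient vector, by Lemma~\ref{lemma-5} applied in the permuted ordering, matches exactly $S_\sigma^{-1}\xi^\sigma_{[k]}$. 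The main obstacle I expect is the bookkeeping of the dynamical shifts $\lambda \mapsto \lambda - y\sum_j h^{(j)}$ accompanying the successive $S_{j_i}$, together with the gauge factors of Proposition~\ref{prop-3}: these must combine precisely with the scalar $\prod_{i=1}^k\theta(\lambda + (n-k-i)y)$ in $\xi^\sigma_{[k]}$ to reproduce the formula~\eqref{e-xi}, in complete parallel with the rational and trigonometric calculations of~\cite{GRTV, RTV2, RV2}.
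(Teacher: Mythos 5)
Your proposal is correct and is essentially the paper's own argument: the base case is Lemma~\ref{lemma-6}, the determinant acts by the stated scalar on the whole tensor product, and the general case follows by transporting along the intertwiners $S_\sigma$, where the identification of the transported eigenvector with $\xi_I$ of \eqref{e-xi} is exactly what Corollary~\ref{cor-1}(i) gives upon specializing $t=z_I$ and dividing by the $S_n$-invariant normalization $\prod_{a\in I,\,b\in\bar I}\theta(z_a-z_b+y)$, with Lemma~\ref{lemma-5} matching the base case. The one superfluous worry is the gauge factors of Proposition~\ref{prop-3}: since $\xi_I$ is built from the $w^-_J$, only Corollary~\ref{cor-1}(i) is needed and no gauge transformation enters.
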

\begin{proof}
 By Corollary \ref{cor-1}(i), we have that
 \begin{gather*}
 \xi_I(z,y,\lambda) =S_i(z,y,\lambda)\xi_{s_i\cdot I}(s_iz,y,\lambda).
 \end{gather*}
Thus $\xi_I(z,y,\lambda)$ is related to $\xi_{s_i\cdot I}(s_iz,y,\lambda)$ by a morphism of representations of the elliptic dynamical quantum group. If $|I|=k$ then there is a permutation $\sigma$ such that $\sigma\cdot I=[k]$ and thus $\xi_I(z,y,\lambda)=\rho(\sigma)\,\xi_{[k]}(\sigma\cdot z,y,\lambda)$ for some morphism $\rho(\sigma)$. Since $\xi_{[k]}(z,y,\lambda)$ is an eigenvector of~$L_{22}(w)$ with eigenvalue~$\mu_{[k]}(w;z,y)$, see Lemma~\ref{lemma-6}, we deduce that $\xi_I(z,y,\lambda)$ is an eigenvector with eigenvalue $\mu_I(w;z,y)=\mu_{[k]}(w;\sigma\cdot z,y)$.
\end{proof}
\subsection{An explicit formula for weight functions}\label{ss-expli}
Let $I=\{i_1<i_2<\cdots<i_k\}\subset [n]$ and recall the definition \eqref{eq-w} of $w(i,I)$ and of $\psi_I$ in Corollary~\ref{cor-2}. We set
\begin{gather*}
 l^+_I(r,a;t,z,y,\lambda) =
 \begin{cases}
 \theta(t_r-z_a+y),& \text{if $a<i_r$}, \\
 \theta(\lambda+t_r-z_a-w(i_r,I)y), & \text{if $a=i_r$}, \\
 \theta(t_r-z_a), & \text{if $a>i_r$},
 \end{cases}
\end{gather*}
and
\begin{gather*}
 l^-_I(r,a;t,z,y,\lambda) =
 \begin{cases}
 \theta(t_r-z_a),& \text{if $a<i_r$}, \\
 \theta(\lambda-t_r+z_a-(w(i_r,I)+1)y), & \text{if $a=i_r$}, \\
 \theta(t_r-z_a+y), & \text{if $a>i_r$}.
 \end{cases}
\end{gather*}
Then
\begin{gather*}
 w^+_I(t;z,y,\lambda)=\frac{1}{\psi_I(y,\lambda)}\operatorname{Sym} \left( \frac { \prod\limits_{r=1}^k\prod\limits_{a=1}^nl^+_I(r,a;t,z,y,\lambda) } {\prod\limits_{1\leq i<j\leq k}\theta(t_i-t_j)\theta(t_j-t_i+y)}\right),
\end{gather*}
and
\begin{gather*}
 w^-_I(t;z,y,\lambda)=\operatorname{Sym}\left( \frac { \prod\limits_{r=1}^k\prod\limits_{a=1}^nl^-_I(r,a;t,z,y,\lambda) } {\prod\limits_{1\leq i<j\leq k}\theta(t_j-t_i)\theta(t_i-t_j+y)}\right).
\end{gather*}
\subsection{Dual bases and resonances}\label{ss-3.12}
Here we prove Propositions \ref{prop-1} and \ref{prop-2}. They are corollaries of the following more precise statement:
\begin{Proposition}\label{prop-6} Let $\Lambda=\mathbb Z+\tau\mathbb Z$ and fix $(z,y,\lambda)\in\mathbb C^n\times \mathbb C\times \mathbb C$. Assume that $z_a-z_b-sy\not\in\Lambda$, for all $a\neq b$, $s=0,\dots,k$, $\lambda-sy\not\in\Lambda$ for all $s\in S\subset\mathbb Z$ for some finite $S$ depending on $k$ and~$n$.
 \begin{enumerate}\itemsep=0pt
 \item[$(i)$] Let $\omega^\pm_{k_a}=\omega_{k_a}^\pm\Big(t;z_a,\lambda+y\sum\limits_{b=a+1}^n(1-2k_b)\Big)$, see Example~{\rm \ref{example-4}}. Then the family
 \begin{gather*}
 \omega^\pm_{k_1}*\dots*\omega^\pm_{k_n}, \qquad k\in\mathbb Z^k_{\geq0},\qquad\sum_{a=1}^n k_a=k
 \end{gather*}
 is a basis of $\Theta^\pm_k(z,y,\lambda)$.
 \item[$(ii)$] The subfamily indexed by $(k_a)_{a=1}^n$ such that $k_a\in\{0,1\}$ for all $a$ is a basis of $\bar\Theta^\pm_k(z,y,\lambda)$.
 \end{enumerate}
\end{Proposition}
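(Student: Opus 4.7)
The plan is to prove both parts by exhibiting a triangular evaluation matrix, combined with the dimension matching. The number of compositions $\mathbf{k}=(k_1,\dots,k_n)$ of $k$ into $n$ non-negative parts equals $\binom{n+k-1}{k}=\dim\Theta^\pm_k$ (Remark~\ref{remark-1}), and the number of $\{0,1\}$-valued compositions equals $\binom{n}{k}$. It therefore suffices to prove linear independence of the proposed families and, for part~(ii), that the $\{0,1\}$-compositions already exhaust $\bar\Theta^\pm_k$. The base case $n=1$ is handled by Examples~\ref{example-4} and~\ref{example-5}.

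For part~(i), write $\omega^\pm_\mathbf{k}=\omega^\pm_{k_1}*\cdots*\omega^\pm_{k_n}$ and, for each composition $\mathbf{k}$, define the resonance tuple $t^{(\mathbf{k})}\in\mathbb{C}^k$ obtained by concatenating the arithmetic progressions $(z_a,z_a-y,\dots,z_a-(k_a-1)y)$ for $a=1,\dots,n$. With compositions ordered lexicographically, I claim the matrix $\bigl(\omega^\pm_\mathbf{k}(t^{(\mathbf{k}')})\bigr)_{\mathbf{k},\mathbf{k}'}$ is lower triangular with nonvanishing diagonal under the stated genericity hypotheses; the $n=2$, $k=2$ case provides a sanity check. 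Off-diagonal vanishing follows from the shuffle formula of Proposition~\ref{prop-0}: the factors $\theta(t_l-z_a)$ and $\theta(t_j-z_a+y)$ in $\varphi^\pm$ kill every shuffle term in which the resonances of $t^{(\mathbf{k}')}$ are placed incompatibly with the $z_a$-factors of the $\omega^\pm_{k_a}$'s, forcing $\omega^\pm_\mathbf{k}(t^{(\mathbf{k}')})=0$ whenever $\mathbf{k}'>_{\mathrm{lex}}\mathbf{k}$. The diagonal entry $\omega^\pm_\mathbf{k}(t^{(\mathbf{k})})$ factors into a product $\prod_a\omega^\pm_{k_a}(z_a,z_a-y,\dots;z_a,\mu_a)$ (itself a product of theta values at arguments $\mu_a-iy$) times cross-term theta values $\theta(z_a-z_b+jy)$, and nonvanishing of both pieces is precisely the arithmetic genericity hypothesis of the proposition. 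Verifying this triangularity in general --- pinning down the exact ordering of compositions and tracking which shuffle terms survive --- is the main technical obstacle.

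For part~(ii), I first verify that each shuffle product with $k_a\in\{0,1\}$ lies in $\bar\Theta^\pm_k$. Since every factor $\omega^\pm_{k_a}$ has at most one variable, in any term of the shuffle sum the two dangerous arguments $z_a$ and $z_a-y$ go to distinct $\omega^\pm_{k_{a'}}$-factors; a case analysis on the outermost shuffle at which these two factors are separated shows that some factor in $\varphi^\pm$ --- either the numerator $\theta(t_l-t_j+y)$ of the shuffle kernel, or one of the $\theta(t_l-z_{a'})$ or $\theta(t_j-z_{a'}+y)$ terms --- evaluates to $\theta(0)$ and annihilates the term. Linear independence of this subfamily then follows from part~(i). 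For spanning, write $f\in\bar\Theta^\pm_k$ in the basis from~(i) as $f=\sum_\mathbf{k} c_\mathbf{k}\omega^\pm_\mathbf{k}$; the vanishing conditions $f(z_a,z_a-y,t_3,\dots,t_k)=0$ become linear constraints on the $c_\mathbf{k}$, and evaluating these further at resonance coordinates $t_3,\dots,t_k\in\{z_b-jy\}$ reduces to a second triangular system of the same type as in part~(i) and forces $c_\mathbf{k}=0$ for every $\mathbf{k}$ with some $k_b\geq 2$.
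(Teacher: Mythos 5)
Your proposal follows essentially the same route as the paper: Section~\ref{ss-3.12} introduces exactly your resonance tuples via the iterated evaluations $\epsilon_{\ell_1,\dots,\ell_n}=\mathrm{ev}_{z_n,\ell_n}\circ\cdots\circ\mathrm{ev}_{z_1,\ell_1}$, and Lemma~\ref{lemma-7} is precisely the triangularity-plus-nonzero-diagonal statement you flag as ``the main technical obstacle'' (stated there in the dominance order on partial sums of compositions, which implies the lexicographic triangularity you want, with the same factorized diagonal entries you describe). The paper does not reprove that lemma either --- it is quoted from Proposition~30 of \cite{FelderTarasovVarchenko1997} --- but the observation that closes it, and that your sketch stops short of, is that at the concatenated resonance point at most one shuffle contributes nontrivially, so the evaluation collapses to a single product of theta values and an induction on $n$ finishes the verification; the $\Theta^+$ case is then deduced from $\Theta^-$ by the duality of Proposition~\ref{p-duality} rather than rerun separately. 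For part~(ii) your two steps match the paper's: membership of the $\{0,1\}$-family in $\bar\Theta^\pm_k$ is Corollary~\ref{cor-3} (obtained there from the compatibility of $\mathrm{ev}_{z_c}$, $\mathrm{ev}_{z_c-y}$ with the shuffle product, rather than your direct term-by-term $\theta(0)$ cancellation, which is a correct alternative), and spanning follows because $\epsilon_{k_1,\dots,k_n}$ with some $k_a\geq 2$ annihilates $\bar\Theta^\pm_k$ while the triangular system forces the corresponding coefficients to vanish --- exactly your ``second triangular system''.
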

Part (i) and a special case of (ii) are proved in~\cite{FelderTarasovVarchenko1997}. The proof relies on the following construction of linear forms whose evaluations on the members of the family form a non-degenerate triangular matrix. For a symmetric function $f(t_1,\dots,t_k)$ and $w\in\mathbb C$, define $\mathrm{ev}_wf$ to be the symmetric function of $k-1$ variables
\begin{gather*}
 \mathrm{ev}_wf(t_1,\dots,t_{k-1})=f(t_1,\dots,t_{k-1},w).
\end{gather*}
It is easy to check that $\mathrm{ev}_w$ maps $\Theta^\pm_k(z,y,\lambda)$ to $\Theta^\pm_{k-1}(z,y,\lambda\pm y)$. For $\ell\in\mathbb Z_{\geq0}$ and $f$ a symmetric function of $k$ variables set
\begin{gather*}
 \mathrm{ev}_{w,\ell}(f)=\begin{cases}
 f&\text{if $\ell=0$},\\
 \mathrm{ev}_{w-\ell y}\circ\cdots\circ\mathrm{ev}_{w-y}\circ\mathrm{ev}_w(f) &\text{if $1\leq\ell\leq k$},\\
 0,&\text{otherwise}.
 \end{cases}
\end{gather*}
Finally, for $\ell\in\mathbb Z_{\geq0}^n$, $\sum\limits_{a=1}^n\ell_a=k$ we introduce linear forms $\epsilon_{\ell_1,\dots,\ell_n}\in\Theta^\pm(z,y,\lambda)^*$:
\begin{gather*}
 \epsilon_{\ell_1,\dots,\ell_n}= \mathrm{ev}_{z_n,\ell_n}\circ\cdots\circ\mathrm{ev}_{z_1,\ell_1}.
\end{gather*}
The following result is a special case of Proposition~30 from~\cite{FelderTarasovVarchenko1997} (adapted to the conventions of this paper). It can be checked by induction using the fact that the evaluation points are such that at most one shuffle in the definition of the shuffle products contributes nontrivially.
\begin{Lemma}\label{lemma-7} Let $(z,y,\lambda)\in\mathbb C^n\times \mathbb C\times\mathbb C$, $k,\ell\in\mathbb Z_{\geq0}^n$ with $\sum\limits_{a=1}^n\ell_a= \sum\limits_{a=1}^n k_a$. Then
 \begin{enumerate}\itemsep=0pt
 \item[$(i)$] Let $f_a\in\Theta^-_{k_a}\Big(z_a,y,\lambda-y\sum\limits_{b=a+1}^n(2k_b-1)\Big)$, $a=1,\dots,n$. Then
 \begin{gather*}
 \epsilon_{\ell_1,\dots,\ell_n}(f_1*\cdots*f_n)=0,
 \end{gather*}
 unless $\ell_1+\cdots+\ell_p\leq k_1+\cdots+k_p$ for all $p=1,\dots,n$, and
 \begin{gather*}
 \epsilon_{k_1,\dots,k_n} (f_1*\cdots*f_n) \\
\qquad {} =\prod_{a=1}^n\mathrm{ev}_{z_a,k_a}(f_a) \prod_{a<b}\left(\prod_{s=0}^{k_b-1}\theta(z_b-z_a-ys) \prod_{s=0}^{k_a-1}\theta(z_a-z_b+y(1-s))\right).
 \end{gather*}
 \item[$(ii)$] Let $\omega^-_k$ be the basis of $\Theta^-_k(z,y,\lambda)$, $z\in\mathbb C$, defined in Example~{\rm \ref{example-4}}. Then
 \begin{gather*}
 \mathrm{ev}_{z,k}\omega^-_k=\prod_{s=1}^k\theta(\lambda-sy).
 \end{gather*}
 \end{enumerate}
\end{Lemma}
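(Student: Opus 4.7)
Part (ii) is an immediate substitution: setting $t_j=z-(j-1)y$ in $\omega^-_k=\prod_{j=1}^k\theta(\lambda-t_j+z-ky)$ yields $\prod_{j=1}^k\theta(\lambda-(k-j+1)y)=\prod_{s=1}^k\theta(\lambda-sy)$. For (i), the plan is to induct on $n$. The base case $n=1$ is immediate: $\epsilon_{\ell_1}(f_1)=\mathrm{ev}_{z_1,\ell_1}(f_1)$ vanishes for $\ell_1>k_1$ by the definition of $\mathrm{ev}_{w,\ell}$, and agrees with the claimed formula (empty pair product) when $\ell_1=k_1$.

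For the inductive step, use associativity to write $f_1*\cdots*f_n=G*f_n$ with $G=f_1*\cdots*f_{n-1}$. Since the iterated shuffle is symmetric, $\epsilon_{\ell_1,\dots,\ell_n}(G*f_n)$ equals the value of $G*f_n$ at the $k=\sum_a\ell_a$ distinct points $T_c^{(s)}=z_c-sy$, $c\in[n]$, $s\in[0,\ell_c-1]$. Expanding the binary shuffle over unordered partitions $(B_1,B_2)$ of $[k]$ with $|B_1|=k-k_n$ and $|B_2|=k_n$ gives
\begin{gather*}
(G*f_n)(T)=\sum_{(B_1,B_2)}G(T_{B_1})\,f_n(T_{B_2})\,\varphi^-(T;B_1,B_2;z,y).
\end{gather*}
The crux is then a unique-shuffle analysis. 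Three classes of factors in $\varphi^-$ can vanish on our evaluation grid: $\theta(T_l-z_a)$ with $l\in B_2$, $a\leq n-1$, which forbids $T_c^{(0)}=z_c$ ($c\leq n-1$) from $B_2$; $\theta(T_j-z_n+y)$ with $j\in B_1$, which forbids $T_n^{(1)}=z_n-y$ from $B_1$; and $\theta(T_l-T_j+y)$ with $j\in B_1$, $l\in B_2$, which forbids any adjacent pair $(T_c^{(s-1)},T_c^{(s)})$ from being split with the smaller-$s$ point in $B_1$. Propagating on $s$ forces every $T_c^{(s)}$ with $c\leq n-1$ into $B_1$, and the residual $z_n$-points into a top segment of $[0,\ell_n-1]$ in $B_1$. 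Counting $|B_1|$ forces $\ell_n\geq k_n$ (this is triangularity at $p=n-1$); in the diagonal case $\ell=k$ the top segment is empty and the partition is unique, namely $B_1=\{z_c-sy:c\leq n-1,\,s\in[0,k_c-1]\}$, $B_2=\{z_n,z_n-y,\dots,z_n-(k_n-1)y\}$.

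With this unique partition, $G(T_{B_1})=\epsilon_{k_1,\dots,k_{n-1}}(G)$ by the very definition of $\epsilon$, to which the inductive hypothesis applies to supply $\prod_{a\leq n-1}\mathrm{ev}_{z_a,k_a}(f_a)$ and the pair factors $(a,b)$ for $1\leq a<b\leq n-1$; the factor $f_n(T_{B_2})=\mathrm{ev}_{z_n,k_n}(f_n)$ supplies the missing $a=n$ evaluation; and the substitution of the unique partition into $\varphi^-$ contributes the remaining pair factors $(a,n)$ with $a<n$, after the rational products $\prod_{s,s'}\theta(T_l-T_j+y)/\theta(T_l-T_j)$ telescope over $s'\in[0,k_n-1]$ and combine with the $\theta(T_l-z_a)$ and $\theta(T_j-z_n+y)$ boundary contributions. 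Triangularity at lower $p<n-1$ is obtained by iterating the same unique-shuffle analysis one level deeper: decompose $G=G'*f_{n-1}$, apply the vanishing argument to a sub-partition of $B_1$, and extract the constraint $\sum_{a\leq n-2}\ell_a\leq\sum_{a\leq n-2}k_a$; continuing through $p=n-3,\ldots,1$ yields each triangularity inequality in turn.

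The main obstacle I expect is the book-keeping in the $\varphi^-$ substitution --- performing the telescoping in the rational block and matching the resulting theta-product term by term with the advertised pair factors $\prod_{s=0}^{k_b-1}\theta(z_b-z_a-ys)\prod_{s=0}^{k_a-1}\theta(z_a-z_b+y(1-s))$. Once the unique surviving partition is identified the vanishing analysis and the induction themselves are clean.
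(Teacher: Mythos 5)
Your argument is precisely the one the paper intends: the paper offers no proof beyond citing Proposition~30 of \cite{FelderTarasovVarchenko1997} and the remark that the evaluation points are such that at most one shuffle contributes nontrivially, and your unique-shuffle analysis (the three vanishing mechanisms in $\varphi^-$, the propagation in $s$, and the count of $|B_1|$) is a correct justification of that remark; part~(ii) and the base case are fine. One structural comment: for the triangularity inequalities at $p<n-1$ it is cleaner to use associativity to regroup the product as $(f_1*\cdots*f_p)*(f_{p+1}*\cdots*f_n)$ and run your two-block vanishing analysis once for each $p$ (all points $T_c^{(s)}$ with $c\leq p$ are forced into the first block, which has only $k_1+\cdots+k_p$ slots), rather than iterating inside $B_1$; in the non-diagonal case $\ell\neq k$ the surviving partition of $G*f_n$ is not unique (the top segment of $z_n$-points can be nonempty), so your ``one level deeper'' step would have to be performed for every surviving partition.

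More importantly, the book-keeping you postpone will not close in the form you expect. Carrying out the telescoping for the unique partition gives, for each pair $a<b$, the factor $\prod_{s=0}^{k_b-1}\theta(z_b-z_a+(k_a-s)y)\prod_{s=0}^{k_a-1}\theta(z_a-z_b+(1-s)y)$, whose first block disagrees with the printed $\prod_{s=0}^{k_b-1}\theta(z_b-z_a-ys)$ whenever $k_a\geq1$. Already for $n=2$, $k_1=k_2=1$ the unique surviving term is $f_1(z_1)f_2(z_2)\,\theta(z_2-z_1+y)\theta(z_1-z_2+y)$, not $f_1(z_1)f_2(z_2)\,\theta(z_2-z_1)\theta(z_1-z_2+y)$; and only the corrected factor is consistent with Lemma~\ref{lemma-5}(i) after dividing $\omega_I^-$ by $\prod_{j\neq l}\theta(t_j-t_l+y)$ (with the printed factor a spurious $\prod\theta(z_b-z_a)/\theta(z_b-z_a+y)$ over pairs in $I$ would survive). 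So the displayed constant in the Lemma has a typo, and you should aim your term-by-term matching at the corrected product rather than the printed one; none of the uses of the Lemma in the paper (non-vanishing and triangularity for Proposition~\ref{prop-6}) are affected.
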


Setting $f_a=\omega^-_{k_a}$, $a=1,\dots, n$ gives a proof of Proposition~\ref{prop-6} (i) in the case of $\Theta^-$. The case of $\Theta^+$ is reduced to this case by Proposition~\ref{p-duality}.

We turn to the proof of Proposition~\ref{prop-6}(ii). In the notation we have introduced here, $\bar\Theta^\pm_k(z,y,\lambda)$ is the intersection of the kernels of $\mathrm{ev}_{z_a,2}$ for $a=1,\dots,n$.

Let $(z,y,\lambda)\in\mathbb C^n\times\mathbb C\times \mathbb C$ and $1\leq c\leq n$. In the following proposition we describe the interaction of $\mathrm{ev}_{z_c}$ and $\mathrm{ev}_{z_c-y}$ with the shuffle product. By using the identifications of Remark~\ref{remark-1}, we view these maps as maps between the following spaces:
\begin{gather*}
 \mathrm{ev}_{z_c} \colon \ \Theta^\pm_k(z,y,\lambda)\to \Theta^\pm_{k-1}(z_1,\dots,z_c-y,\dots,z_n,y,\lambda),\\
 \mathrm{ev}_{z_c-y} \colon \ \Theta^\pm_k(z,y,\lambda)\to \Theta^\pm_{k-1}(z_1,\dots,z_c+y,\dots,z_n,y,\lambda\pm 2y).
\end{gather*}
\begin{Proposition}\label{prop-ev-*} In the notation of Proposition~{\rm \ref{prop-0}}, let $f\in\Theta^-_{k'}(z',y,\lambda+y(n''-2k''))$, $g\in \Theta^-_{k''}(z'',y,\lambda)$. We have
\begin{enumerate}\itemsep=0pt
\item[$(i)$] $ \mathrm{ev}_{z_c}(f*g)= \mathrm{ev}_{z_c}(f)*g \prod\limits_{b=n'+1}^n\theta(z_c-z_b+y)$, $1\leq c\leq n'$,
\item[$(ii)$] $\mathrm{ev}_{z_c-y}(f*g)= f*\mathrm{ev}_{z_c-y}(g) \prod\limits_{a=1}^{n'}\theta(z_c-z_a-y)$, $n'<c\leq n$.
\end{enumerate}
Similarly, let $f\in\Theta^+_{k'}(z',y,\lambda+y(n''-2k''))$, $g\in \Theta^+_{k''}(z'',y,\lambda)$. We have
\begin{enumerate}\itemsep=0pt
\item[$(iii)$] $ \mathrm{ev}_{z_c-y}(f*g)= \mathrm{ev}_{z_c-y}(f)*g \prod\limits_{b=n'+1}^n\theta(z_c-z_b+y)$, $1\leq c\leq n'$,
\item[$(iv)$] $ \mathrm{ev}_{z_c}(f*g)= f*\mathrm{ev}_{z_c}(g) \prod\limits_{a=1}^{n'}\theta(z_c-z_a-y)$, $n'<c\leq n$.
\end{enumerate}
\end{Proposition}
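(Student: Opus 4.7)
The plan is to unfold the shuffle product as a sum over $(k',k'')$-shuffles (Remark~\ref{remark-3}) and evaluate one variable at $z_c$ or $z_c-y$, tracking which summands survive and how the remaining factors recombine.

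For part~(i), I would write
\begin{gather*}
 (f*g)(t_1,\dots,t_k) = \sum_{A\subset\{1,\dots,k\},\,|A|=k'} f(t_A)\,g(t_{\bar A})\,\tilde\varphi^-_A(t),
\end{gather*}
where $\tilde\varphi^-_A$ denotes $\varphi^-$ with the first $k'$ indices replaced by the elements of $A$ and the last $k''$ by those of $\bar A$, both in increasing order. By symmetry of $f*g$ we may compute $\mathrm{ev}_{z_c}(f*g)$ by setting $t_k=z_c$. Since $1\le c\le n'$, the factor $\prod_{l\in\bar A}\prod_{a=1}^{n'}\theta(t_l-z_a)$ in $\tilde\varphi^-_A$ contains $\theta(t_k-z_c)$ whenever $k\in\bar A$, so only shuffles with $k\in A$ survive. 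For such an $A$, writing $A'=A\setminus\{k\}$, symmetry of $f$ gives $f(t_A)|_{t_k=z_c}=(\mathrm{ev}_{z_c}f)(t_{A'})$, and the $j=k$ slice of $\tilde\varphi^-_A$ contributes the isolated scalar $\prod_{b=n'+1}^n\theta(z_c-z_b+y)$ together with $\prod_{l\in\bar A}\theta(t_l-z_c+y)/\theta(t_l-z_c)$. The denominator cancels exactly the $a=c$ slice of $\prod_{l\in\bar A}\prod_{a=1}^{n'}\theta(t_l-z_a)$, and what remains is precisely the shuffle factor $\tilde\varphi^-_{A'}$ for the product $(\mathrm{ev}_{z_c}f)*g$ on the modified $z$-vector $(z_1,\dots,z_c-y,\dots,z_n)$. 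Summing over $A'$ then reassembles $(\mathrm{ev}_{z_c}f)*g$ with the claimed prefactor.

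Parts~(ii)--(iv) follow the same template with a different factor of $\varphi^\pm$ playing the role of the vanishing one. In~(ii), setting $t_k=z_c-y$ with $c>n'$ makes $\theta(t_k-z_c+y)=0$, killing the $k\in A$ terms; after an analogous cancellation (which uses the oddness of $\theta$ to match the signs between $\theta(t_j-z_c+y)$ and $\theta(z_c-t_j-y)$), one obtains $f*\mathrm{ev}_{z_c-y}(g)$ times $\prod_{a=1}^{n'}\theta(z_c-z_a-y)$. In~(iii), $\prod_{l\in\bar A}\prod_{a\le n'}\theta(t_l-z_a+y)$ of $\varphi^+$ vanishes at $t_k=z_c-y$ when $k\in\bar A$, forcing $k\in A$; in~(iv), $\prod_{j\in A}\prod_{b>n'}\theta(t_j-z_b)$ of $\varphi^+$ vanishes at $t_k=z_c$ when $k\in A$, forcing $k\in\bar A$.

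The main obstacle is purely the bookkeeping: in each of the four cases one must check that after the selection of surviving shuffles and the isolated cancellation between a spurious $\theta$-factor in the denominator and the corresponding factor in the numerator, the remaining product matches exactly the shuffle factor $\tilde\varphi^\pm$ for the product on the modified $z$-data, up to the claimed scalar prefactor. Once this is done carefully for part~(i), the three other cases are mechanical variants obtained by interchanging the roles of $A$ and $\bar A$ and of $z_c$ and $z_c-y$, together with application of the symmetry $\theta(-x)=-\theta(x)$.
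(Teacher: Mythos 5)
Your proposal is correct and follows exactly the paper's own argument: the relevant vanishing factor of $\varphi^\pm$ kills all shuffles except those in which the evaluated variable is an argument of $f$ (resp.\ $g$), and the surviving factor is computed explicitly, with the leftover numerator $\theta(t_l-z_c+y)$ accounting for the shift $z_c\mapsto z_c\mp y$ in the target space. The paper states this in one sentence for case (i) and declares (ii)--(iv) similar; you have simply carried out the bookkeeping it leaves implicit.
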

\begin{proof} (i) Due to the factor $\prod\limits_{l=k'+1}^k\theta(t_l-z_c)$ in the definition of $\varphi^-$, see Proposition~\ref{prop-0}, the only terms in the sum over permutations contributing nontrivially to $\mathrm{ev}_{z_c}(f*g)$ are such that~$t_k$ is an argument of~$f$. Thus $\mathrm{ev}_{z_c}(f*g)=\mathrm{ev}_{z_c}(f)*g$ times a~factor that is computed explicitly. The proof of (ii)--(iv) is similar.
\end{proof}

By iterating, we obtain:
\begin{Corollary}\label{cor-3} Let $f\in\Theta^-_{k'}(z',y,\lambda+y(n''-2k''))$, $g\in \Theta^-_{k''}(z'',y,\lambda)$. We have
 \begin{gather*}
 \mathrm{ev}_{z_c,2}(f*g)= \mathrm{ev}_{z_c,2}(f)*g \prod_{b=n'+1}^n\theta(z_c-z_b+y)\theta(z_c-z_b), \qquad 1\leq c\leq n', \\
 \mathrm{ev}_{z_c,2}(f*g) = f*\mathrm{ev}_{z_c,2}(g) \prod_{a=1}^{n'}\theta(z_c-z_a-y)\theta(z_c-z_a), \qquad n'<c\leq n,
 \end{gather*}
 and similarly for $\Theta^+$.
 In particular, if $f$ and $g$ satisfy the vanishing condition then
 also $f*g$ does.
\end{Corollary}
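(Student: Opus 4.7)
The plan is to derive both identities by a direct iteration of Proposition \ref{prop-ev-*}, exploiting the factorization $\mathrm{ev}_{z_c,2}=\mathrm{ev}_{z_c-y}\circ\mathrm{ev}_{z_c}$ (and its swapped form $\mathrm{ev}_{z_c,2}=\mathrm{ev}_{z_c}\circ\mathrm{ev}_{z_c-y}$, valid because the $\mathrm{ev}_w$ commute on symmetric functions). The only subtlety is careful bookkeeping of how the parameters $(z,\lambda)$ shift after the first evaluation.

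For case~(i) with $1\leq c\leq n'$, I first apply Proposition \ref{prop-ev-*}(i) to obtain
\begin{gather*}
 \mathrm{ev}_{z_c}(f*g) = \mathrm{ev}_{z_c}(f) * g \cdot \prod_{b=n'+1}^n \theta(z_c-z_b+y).
\end{gather*}
Now $\mathrm{ev}_{z_c}(f)$ lies in $\Theta^-_{k'-1}$ with primed tuple $(z_1,\dots,z_c-y,\dots,z_{n'})$, so the role of ``$z_c$'' in the new shuffle product is played by $z_c-y$. Applying Proposition \ref{prop-ev-*}(i) once more to evaluate at this new value yields
\begin{gather*}
 \mathrm{ev}_{z_c-y}\bigl(\mathrm{ev}_{z_c}(f)*g\bigr) = \mathrm{ev}_{z_c,2}(f)*g \cdot \prod_{b=n'+1}^n \theta\bigl((z_c-y)-z_b+y\bigr),
\end{gather*}
and the product of the two accumulated factors is exactly $\prod_{b=n'+1}^n\theta(z_c-z_b+y)\theta(z_c-z_b)$, as claimed. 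Case~(ii) is the mirror argument: use the swapped factorization and apply Proposition \ref{prop-ev-*}(ii) twice, tracking that after the first step $z_c$ in the doubly-primed tuple becomes $z_c+y$ so the second evaluation point $z_c$ plays the role of ``new $z_c-y$''. The two $\Theta^+$ cases are entirely analogous, using parts (iii) and (iv) of Proposition \ref{prop-ev-*}.

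The final assertion about the vanishing condition is an immediate consequence: if $f\in\bar\Theta^\pm_{k'}(z',y,\cdot)$ and $g\in\bar\Theta^\pm_{k''}(z'',y,\cdot)$, then $\mathrm{ev}_{z_c,2}(f)=0$ for $1\leq c\leq n'$ and $\mathrm{ev}_{z_c,2}(g)=0$ for $n'<c\leq n$, so the two displayed formulas give $\mathrm{ev}_{z_c,2}(f*g)=0$ for every $c\in[n]$, which is the vanishing condition for $f*g$.

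Since the whole argument is a routine iteration, there is no genuine obstacle; the only place where one must be vigilant is keeping track of the parameter shift $z_c\mapsto z_c\mp y$ inside the shuffle factor $\varphi^\pm$ when applying Proposition \ref{prop-ev-*} the second time, because the explicit product of theta factors in the corollary arises precisely from the cancellation of this shift against the original.
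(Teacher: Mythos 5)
Your proof is correct and follows exactly the route the paper intends: the paper derives this corollary from Proposition \ref{prop-ev-*} with the single word ``iterating,'' and your write-up supplies precisely that iteration, including the two genuine subtleties (the shift $z_c\mapsto z_c\mp y$ in the parameter tuple before the second evaluation, and the use of the swapped factorization $\mathrm{ev}_{z_c,2}=\mathrm{ev}_{z_c}\circ\mathrm{ev}_{z_c-y}$, valid by symmetry, so that parts (ii)--(iv) apply). The deduction of the vanishing-condition statement is also the intended one.
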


\begin{proof}[Proof of Proposition \ref{prop-6}(ii)] We give the proof for $\bar\Theta^-_k(z,y,\lambda)$. The proof for $\bar\Theta^+_k(z,y,\lambda)$ is similar or can be deduced using the duality map of Proposition~\ref{p-duality}. It follows from Corollary~\ref{cor-3} that the indicated subfamily does belong to $\bar\Theta^-_k(z,y,\lambda)$. It remains to show that it is a~spanning set. By Proposition~\ref{prop-6}, (i), we know that any element of $\bar\Theta^-_k(z,y,\lambda)$ can be written as linear combinations
\begin{gather*}
 \sum_{k_1+\cdots+k_n=k}\lambda_{k_1,\dots,k_n} \omega_{k_1}*\cdots*\omega_{k_n}.
\end{gather*}
On the other hand, the linear form $\epsilon_{k_1,\dots,k_n}$ vanishes on $\bar \Theta^-_k(z,y,\lambda)$ if $k_a\geq2$ for some $a$, since it involves the evaluation at $z_a,z_a-y$. By Lemma~\ref{lemma-8}, the coefficients $\lambda_{k_1,\dots,k_n}$ must thus vanish if at least one $k_a$ is $\geq2$ which is what we had to prove.
\end{proof}

\section{Equivariant elliptic cohomology of Grassmannians}\label{s-4}

Let $E$ be an elliptic curve and $G$ a compact group. Equivariant elliptic cohomology was postulated by Ginzburg, Kapranov and Vasserot in \cite{GKV} as a functor $E_G$ from pairs of finite $G$-CW complexes to superschemes satisfying a set of axioms, generalizing those satisfied by equivariant cohomology and equivariant $K$-theory. One of them being that for a point pt, $E_G(\mathrm{pt})$ is a suitable moduli scheme of $G$-bundles on the dual elliptic curve. For example $E_{U(n)}(\mathrm{pt})=E^{(n)}=E^n/S_n$ and for an abelian group $A$ with group of characters $X(A)=\operatorname{Hom}(A,U(1))$, $E_A(\mathrm{pt})=\operatorname{Hom}(X(A),E)$. By functoriality, the scheme $E_G(M)$ comes with a structure map
\begin{gather*}
 p_G\colon \ E_G(M)\to E_G(\mathrm{pt}).
\end{gather*}
For a complex elliptic curve, the case we consider here, a~construction of equivariant elliptic cohomology was given by Grojnowski \cite{GKV,Grojnowski}. It has the property that for a~connected Lie group~$G$ with maximal torus $A$ and Weyl group $W$ then $W$ acts on $E_A(M)$ and $E_G(M)=E_A(M)/W$.

\subsection{Tautological bundles and Chern classes}\label{ss-eecG}
Let $E$ be a complex elliptic curve. The unitary group $U(n)$ and its maximal torus $A\cong U(1)^n$ act on the Grassmannian $\mathrm{Gr}(k,n)$ of $k$-dimensional subspaces of~$\mathbb C^n$. The $A$-equivariant cohomo\-logy of $\mathrm{Gr}(k,n)$ was described in \cite[Section~1.9]{GKV}, and is analogous to the classical description of ordinary equivariant cohomology in terms of Chern classes of tautological bundles. The Grassmannian has two tautological equivariant vector bundles of rank $k$ and $n-k$, respectively. They give rise to a characteristic map~\cite{GKV}
\begin{gather*} \chi\colon \ E_A(\mathrm{Gr}(k,n))\to E_{U(k)}(\mathrm{pt})\times E_{U(n-k)}(\mathrm{pt})=E^{(k)}\times E^{(n-k)}.
\end{gather*} Here $E^{(k)}=E^k/S_k$ denotes the symmetric power of the elliptic curve, which is the $U(k)$-equivariant cohomology of a point. Together with the structure map to $E_A(\mathrm{pt})$ we have a~description of
the equivariant elliptic cohomology as the fiber product of $E^{(k)}\times E^{(n-k)}$ and $E^n$ over $E^{(n)}$, namely, we have the Cartesian square:
\begin{gather*}
 \begin{array}{@{}ccc}
 E_A(\mathrm{Gr}(k,n))&\longrightarrow&E^{(k)}\times E^{(n-k)}\\
 \downarrow& &\downarrow\\ E^n&\longrightarrow&E^{(n)}.
 \end{array}
\end{gather*}
The left vertical arrow is the structure map to $E_A(\mathrm{pt})$; the maps $E^n\to E^{(n)}$ and $E^{(k)}\times E^{(n-k)}\to E^{(n)} $ are the canonical projections. Thus $ E_A(\mathrm{Gr}(k,n))=(E^{(k)}\times
E^{(n-k)})\times_{E^{(n)}}E^n $.

The symmetric group $S_n$ (the Weyl group of $U(n)$) acts on the diagram above (with trivial action on the right column) and the $U(n)$-equivariant cohomology is the quotient by this action:
\begin{gather*}
 E_{U(n)}(\mathrm{Gr}(k,n))=E_A(\mathrm{Gr}(k,n))/S_n=E^{(k)}\times E^{(n-k)}.
\end{gather*}

\subsection{Moment graph description} An alternative useful description of the equivariant elliptic cohomology is via the localization theorem, proved by Goresky, Kottwitz and MacPherson \cite{GKM} for equivariant cohomology and generalized to elliptic cohomology by Knutson and Ro\c{s}u~\cite{Rosu}. For partial flag varieties such as Grassmannians it is described explicitly in \cite[Example~4.4]{Ganter}. The action of $A$ on the Grassmannian
$\mathrm{Gr}(k,n)$ has isolated fixed points labeled by subsets of $[n]=\{1,\dots,n\}$ with $k$ elements. The fixed point $x_I$ labeled by $I\subset[n]$ is the $k$-plane spanned by the coordinate axes indexed by~$I$. The inclusion of the fixed points $x_I$ induces a map $\iota_I\colon E_A(\mathrm{pt})\to E_A(\mathrm{Gr}(k,n))$ and it turns out that $E_A(\mathrm{Gr}(k,n))$ is the union of the $\iota_{I}E_A(\mathrm{pt})\simeq E^n$ where~$I$ runs over the subsets of~$[n]$ with~$k$ elements.

Let $\Gamma$ be the graph with vertex set $\Gamma_0$ the set of subsets $I\subset [n]$ with $|I|=k$ elements and an edge connecting $I$ with $I'$ for each pair of vertices such that $|I\cap I'|=k-1$. In this case $I=J\cup\{a\}$ and $I'=J\cup\{b\}$ with $|J|=k-1$ and we set $\Delta_{I,I'}=\{z\in E^n, z_a=z_b\}$. We then have inclusion maps $\Delta_{I,I'}\to \iota_{I}E_A(\mathrm {pt})=E^n$, $\Delta_{I,I'}\to \iota_{I'}E_A(\mathrm {pt})=E^n$.
\begin{Proposition}\label{p-001} We have the coequalizer diagram
 \begin{gather*}
 \sqcup_{|I\cap I'|=k-1}\Delta_{I,I'}\rightrightarrows \sqcup_{I\subset [n],|I|=k}E^n\to E_{A}(\mathrm{Gr}(k,n))).
 \end{gather*}
\end{Proposition}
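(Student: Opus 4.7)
The approach is to use the Cartesian square description recalled in Section~\ref{ss-eecG}, which presents $E_A(\mathrm{Gr}(k,n))$ as the fiber product $(E^{(k)}\times E^{(n-k)})\times_{E^{(n)}}E^n$. Under this identification, the fixed-point inclusion $\iota_I\colon E^n\to E_A(\mathrm{Gr}(k,n))$ is the map
\[
(z_1,\dots,z_n)\mapsto\bigl((z_a)_{a\in I},(z_b)_{b\in\bar I},(z_1,\dots,z_n)\bigr),
\]
since the tautological subbundle and its complement restrict at the fixed point $x_I$ to $A$-representations with characters $(\epsilon_a)_{a\in I}$ and $(\epsilon_b)_{b\in\bar I}$ respectively. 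The task is to show that $\pi=\sqcup_I\iota_I$ exhibits $E_A(\mathrm{Gr}(k,n))$ as the coequalizer of the indicated double arrow.

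The first step is to verify surjectivity of $\pi$ and to identify its fibers on points. A point $(u,v,z)$ of the fiber product satisfies that $u+v$ equals the image of $z$ in $E^{(n)}$, so any choice of a subset $I\subset[n]$ of size $k$ realizing the multiset splitting $\{z_1,\dots,z_n\}=u\cup v$ gives $(u,v,z)=\iota_I(z)$. For the fiber relation, $\iota_I(z)=\iota_{I'}(z')$ forces $z=z'$ from the third coordinate and $(z_a)_{a\in I}=(z_a)_{a\in I'}$ as multisets from the first two. When $|I\cap I'|=k-1$, with $I=J\cup\{a\}$, $I'=J\cup\{b\}$, this reduces to the condition $z_a=z_b$, i.e.\ $z\in\Delta_{I,I'}$. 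For an arbitrary pair $I\neq I'$ with matching multisets, I produce a chain $I=I_0,I_1,\dots,I_r=I'$ of subsets with $|I_{s-1}\cap I_s|=k-1$ and $z\in\Delta_{I_{s-1},I_s}$ for every $s$, by iteratively swapping an element of $I_{s-1}\setminus I'$ with the equal-valued element of $I'\setminus I_{s-1}$. Thus the set-theoretic equivalence relation induced by $\pi$ is generated by the adjacent divisors $\Delta_{I,I'}$.

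Upgrading this identification to the scheme-theoretic level is a local question on $E^n$. Over the open dense locus where $z_1,\dots,z_n$ are pairwise distinct, $\pi$ restricts to a disjoint union of isomorphisms onto the $\binom{n}{k}$ sheets of $E_A(\mathrm{Gr}(k,n))$. Near a generic point of a codimension-one stratum $\Delta_{I,I'}$, the fiber product description presents $E_A(\mathrm{Gr}(k,n))$ locally as two smooth $E^n$-branches crossing transversally along $\{z_a=z_b\}$, which matches precisely the local pushout of two copies of $E^n$ along $\Delta_{I,I'}$. The main obstacle is the analysis at higher-codimension strata where several branches meet: one must show that the iterated pushout introduces no spurious components or nilpotent functions. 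I would handle this by induction on the codimension of the stratum, combining the two-branch local model at each $\Delta_{I,I'}$ with the finiteness of $\pi$ and with the fact, already established on points, that the full multiset-equality relation is generated by the codimension-one ones.
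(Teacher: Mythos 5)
Your proposal follows the same route as the paper: the paper simply exhibits the comparison map $z\mapsto(z_I,z_{\bar I},z)$ from $\sqcup_I E^n$ to the fiber product $\big(E^{(k)}\times E^{(n-k)}\big)\times_{E^{(n)}}E^n$ and asserts that the coequalizer property ``is easy to check directly'' from that description. Your set-theoretic analysis (surjectivity, generation of the fiber relation by the codimension-one diagonals via chains of swaps) and your local models supply more detail than the paper itself does; the one step you leave as a sketch, the scheme-theoretic gluing at higher-codimension strata, is exactly the step the paper also leaves unverified.
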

In other words, $E_A(\mathrm{Gr}(k,n))$ is the union of copies of $E^n$ labeled by subsets $I\subset [n]$ of size $k$, glued along the diagonals $\Delta_{I,I'}$. The structure map $E_A(\mathrm{Gr}(k,n))\to E^n$ is the identity on each copy. The isomorphism between the two descriptions of $E_A(\mathrm{Gr}(k,n))$ is induced by the map
\begin{gather*}
 \sqcup_{I\subset [n],|I|=k}E^n\to \big(E^{(k)}\times E^{(n-k)}\big)\times_{E^{(n)}}E^n,
\end{gather*}
whose restriction to the copy $E^n$ labeled by $I$ is
\begin{gather*} z\mapsto (z,z_I,z_{\bar I}),\qquad z_I=(z_i)_{i\in I}, \qquad z_{\bar I}=(z_j)_{j\in \bar I}.
\end{gather*}
It is easy to check directly Proposition~\ref{p-001} using the fiber product as a definition of the equivariant elliptic cohomology.
\subsection{Cotangent bundles and dynamical parameter}\label{ss-Extended}
The action of $U(n)$ on the Grassmannian induces an action on its cotangent bundle $X_{k,n}=T^*\mathrm{Gr}(k,n)$. Additionally we have an action of $U(1)$ on the cotangent bundle by scalar multiplication on the fibers, so we get an action of
\begin{gather*}
 G=U(n)\times U(1)
\end{gather*}
and its Cartan torus
\begin{gather*}
 T=A\times U(1)\cong U(1)^{n+1}.
\end{gather*}
Since the cotangent bundle is equivariantly homotopy equivalent to its zero section, the equivariant elliptic cohomology is simply
\begin{gather*}
 E_T(X_{k,n})=E_A(\mathrm{Gr}(k,n))\times E,
\end{gather*}
a scheme over $E_T(\mathrm{pt})=E^{n}\times E$, and
\begin{gather*}
 E_G(X_{k,n})=E_{U(n)}(\mathrm{Gr}(k,n))\times E,
\end{gather*}
a scheme over $E_G(\mathrm{pt})=E^{(n)}\times E$.

We will consider, as in \cite{AO}, an extended version of elliptic cohomology to accommodate for dynamical variables in quantum group theory, namely
\begin{gather*}
 \hat E_T(X_{k,n}):=E_T(X_{k,n})\times (\mathrm{Pic}(X_{k,n})\otimes_{\mathbb Z}E)\cong E_T(X_{k,n})\times E,
\end{gather*}
a scheme over $\hat E_T(\mathrm{pt})=E^n\times E\times E$ (the Picard group of the Grassmannian is infinite cyclic generated by the top exterior power of the tautological bundle). Similarly, we set
\begin{gather*}
 \hat E_G(X_{k,n})=E_G(X_{k,n})\times E,
\end{gather*}
which is a scheme over $\hat E_G(\mathrm{pt})=E^{(n)}\times E\times E$.

The fixed points $x_K$ for the $A$-action on the Grassmannian are also isolated fixed points in the cotangent bundle of the Grassmannian for the $T$-action and we have maps $\iota_K=\hat E_T(i_K)\colon \hat E_T(\mathrm{pt})$ $\to \hat E_T(X_{k,n})$ induced by the inclusion $i_K\colon \mathrm{pt} \mapsto x_K$. Then $\hat E_T(X_{k,n})$ consists of the components $\iota_K\hat E_T(\mathrm{pt})$, where $K$ runs over the subsets of $[n]$ with $k$ elements. By Section~\ref{ss-eecG}, we have a~description of $\hat E_T(X_{k,n})$ as a~fiber product:
\begin{gather*}
\hat E_T(X_{k,n})\cong \big(E^{(k)}\times E^{(n-k)}\big)\times_{E^{(n)}}E^n\times E\times E.
\end{gather*}
In particular we have the {\em characteristic embedding}
\begin{gather}\label{e-ce}
c\colon \ \hat E_T(X_{k,n})\to E^{(k)}\times E^{(n-k)}\times E^{n}\times E\times E
\end{gather}
of the extended $T$-equivariant elliptic cohomology scheme into a non-singular projective variety.

\section[Admissible line bundles on $\hat E_T(X_{k,n})$]{Admissible line bundles on $\boldsymbol{\hat E_T(X_{k,n})}$}\label{s-5}
\subsection[Line bundles on $E^{p}$]{Line bundles on $\boldsymbol{E^{p}}$}\label{ss-alb1}
Line bundles on complex tori are classified by the Appel--Humbert theorem, see \cite[Section~I.2]{Mumford}. In the special case of powers of generic elliptic curves this reduces to the following explicit description: let $E=\mathbb C/\Lambda$ with $\Lambda=\mathbb Z+\tau\mathbb Z$ so that $E^p=\mathbb C^p/\Lambda^p$. For each pair $(N,v)$ consisting of a symmetric integral $p\times p$ matrix $N$ and $v\in(\mathbb C/\mathbb Z)^p$ let $ \mathcal L(N,v)$ be the line bundle \mbox{$(\mathbb C^p\times \mathbb C)/\Lambda^p\to E^p$} with action
\begin{gather*}
 \lambda\cdot(x,u)=(x+\lambda,e_\lambda(x)u),\qquad \lambda\in\Lambda^p, \qquad x\in \mathbb C^p,\qquad u\in \mathbb C,
\end{gather*}
and cocycle
\begin{gather*}
 e_{n+m\tau}(x)=(-1)^{n^tNn}(-{\rm e}^{{\rm i}\pi\tau})^{m^tNm}{\rm e}^{2\pi{\rm i} m^t(Nx+v)}, \qquad n,m\in\mathbb Z^p.
\end{gather*}
\begin{Proposition} \label{p-linebundles} \quad
 \begin{itemize}\itemsep=0pt
 \item[$(i)$] $\mathcal L(N,v)$ is isomorphic to $\mathcal L(N',v')$ if and only if $N=N'$ and $v\equiv v'\mod\Lambda^p$.
 \item[$(ii)$] For generic $E$, every holomorphic line bundle on $E^p$ is isomorphic to $\mathcal L(N,v)$ for some $(N,v)$.
 \item[$(iii)$] $\mathcal L(N_1,v_1)\otimes \mathcal L(N_2,v_2)\cong \mathcal L(N_1+N_2,v_1+v_2)$.
 \item[$(iv)$] Let $\sigma\in S_p$ act by permutations on $E^p$ and $\mathbb C^p$. Denote also by $\sigma$ the corresponding $p\times p$ permutation matrix. Then
 \begin{gather*}\sigma^*\mathcal L(N,v)=\mathcal L\big(\sigma^t N\sigma,\sigma^t v\big).\end{gather*}
 \end{itemize}
\end{Proposition}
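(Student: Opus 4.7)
The statement is the Appel--Humbert theorem specialized to $E^p$. The plan is to dispatch parts (iii) and (iv) by direct cocycle manipulation, and to derive (i) and (ii) from the general classification of line bundles on complex tori.

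For (iii), recall that $\mathcal L(N,v)=(\mathbb C^p\times\mathbb C)/\Lambda^p$ for the indicated action, and the tensor product of two such bundles is represented by the pointwise product of the cocycles. Multiplying the formulas and using the additivity of the quadratic forms $n^tNn$, $m^tNm$ and of the linear expression $m^t(Nx+v)$ in $(N,v)$ immediately yields the cocycle of $\mathcal L(N_1+N_2,v_1+v_2)$. For (iv), $\sigma$ lifts to the linear map $x\mapsto\sigma x$ on $\mathbb C^p$ commuting with the lattice action via $\lambda\mapsto\sigma\lambda$, so $\sigma^*\mathcal L(N,v)$ is represented by the cocycle $\tilde e_\lambda(x)=e_{\sigma\lambda}(\sigma x)$. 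Substituting $\sigma n$, $\sigma m$, $\sigma x$ into the defining formula and using $(\sigma u)^tM(\sigma w)=u^t(\sigma^tM\sigma)w$ together with $(\sigma m)^tv=m^t\sigma^tv$ identifies this with the cocycle of $\mathcal L(\sigma^tN\sigma,\sigma^tv)$.

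For (i), the ``if'' direction is a coboundary computation: when $v'-v\in\mathbb Z^p$ the two cocycles coincide, while when $v'-v=\tau k$ with $k\in\mathbb Z^p$ the nowhere-vanishing holomorphic function $f(x)={\rm e}^{2\pi{\rm i}k^tx}$ satisfies $f(x+n+m\tau)/f(x)={\rm e}^{2\pi{\rm i}k^tn}{\rm e}^{2\pi{\rm i}\tau k^tm}={\rm e}^{2\pi{\rm i}m^t(v'-v)}$, precisely the ratio of the two cocycles with $N=N'$. For the ``only if'' direction, the first Chern class of $\mathcal L(N,v)$ depends only on $N$ (it is represented by the alternating form on $\Lambda^p$ implicit in the cocycle), forcing $N=N'$; then $\mathcal L(0,v-v')$ must be the trivial element of $\mathrm{Pic}^0(E^p)$, and tracing through the identification $\mathrm{Pic}^0(E^p)\cong\mathbb C^p/\Lambda^p=E^p$ via $v\mapsto\mathcal L(0,v)$ gives $v\equiv v'\pmod{\Lambda^p}$.

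For (ii), by the Appel--Humbert theorem (see \cite{Mumford}), $\mathrm{Pic}(E^p)$ is an extension of the N\'eron--Severi group by $\mathrm{Pic}^0(E^p)\cong E^p$. For a generic elliptic curve $E$ one has $\mathrm{End}(E)=\mathbb Z$ and therefore $\mathrm{NS}(E^p)$ is identified with the group of symmetric integer $p\times p$ matrices; each such matrix is the $N$ of some cocycle of our form, so combined with (i) this proves (ii). The single nontrivial input, and the main obstacle of the proof, is this last N\'eron--Severi computation, which I would obtain from the Hodge-theoretic identity $\mathrm{NS}(E^p)=H^{1,1}(E^p)\cap H^2(E^p,\mathbb Z)$: for generic $\tau$ the integral $(1,1)$-classes on $E^p$ reduce to symmetric integer bilinear forms on $H_1(E,\mathbb Z)^{\oplus p}$. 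All remaining steps are routine.
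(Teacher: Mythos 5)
Your proof is correct, and it takes the same route the paper intends: the paper states this proposition without proof, simply citing Mumford's treatment of the Appel--Humbert theorem, and your argument is precisely the standard specialization of that theorem to $E^p$ (cocycle manipulations for (iii), (iv) and the ``if'' part of (i); first Chern class and $\mathrm{Pic}^0(E^p)\cong E^p$ for the ``only if''; the computation $\mathrm{NS}(E^p)\cong\{\text{symmetric integer matrices}\}$ for generic $E$ for (ii)). The one point worth making explicit is that the genericity hypothesis in (ii) is exactly what guarantees $\mathrm{End}(E)=\mathbb Z$, so that no extra integral $(1,1)$-classes appear --- which is the content of the paper's remark that CM curves are the exceptions.
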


To an integral symmetric $p\times p$ matrix $N$ and a vector $v\in \mathbb C^p$ we associate the integral quadratic form $N(x)=x^tNx$ and the linear form $v(x)=x^tv$ on the universal covering $\mathbb C^p$ of $E^p$ and we call them the quadratic form and the linear form of the line bundle $\mathcal L(N,v)$. The linear form is defined up to addition of an integral linear form.

\begin{Remark} Exceptions to (ii) are elliptic curves with complex multiplication, in which case there are additional line bundles that are not isomorphic to those of the form $\mathcal L(N,v)$.
\end{Remark}
\begin{Remark} The map $E^p\to \mathrm{Pic}(E^p)$ sending $v$ to $\mathcal L(0,v)$ is an isomorphism onto the subgroup $\mathrm{Pic}^0(E^p)$ of classes of line bundles of degree~0. If $E$ is a generic elliptic curve, the N\'eron--Severi group $\mathrm{NS}(E^p)=\mathrm{Pic}(E^p)/\mathrm{Pic}^0(E^p)$ is a free abelian group of rank $n(n+1)/2$ identified with the group of integral symmetric matrices via $N\mapsto \mathcal L(N,0)$.
\end{Remark}
\begin{Remark} \label{rem5.4} Sections of $\mathcal L(N,v)$ are the same as functions $f$ on $\mathbb C^p$ such that $f(x+\lambda)=e_\lambda(x)^{-1}f(x)$ for all $\lambda\in\Lambda^p$, $x\in\mathbb C^p$. Explicitly, a function on $\mathbb C^p$ defines a section of $\mathcal L(N,v)$ if and only if
 \begin{gather*} f(x_1,\dots,x_j+1,\dots,x_p) = (-1)^{N_{jj}}f(x), \\
 f(x_1,\dots,x_j+\tau,\dots,x_p) = (-1)^{N_{jj}}{\rm e}^{-2\pi{\rm i}(\sum_k N_{jk}x_k+v_j)-\pi {\rm i} \tau N_{jj}} f(x),
 \end{gather*} for all $x\in\mathbb C^p$, $j=1,\dots,p$.
\end{Remark}
\begin{Remark} The factors of $-1$ in the cocycle can be removed by going to an equivalent cocycle. With the present convention the line bundles $\mathcal L(N,0)$ correspond to divisors whose irreducible components are subgroups. Let $\theta(z)$ be the odd Jacobi theta function in one variable, see~\eqref{e-theta}. Then, for any $r\in\mathbb Z^p$ and $z\in\mathbb C$,
 \begin{gather*}
 \theta(r^tx+z)=\theta(r_1x_1+\cdots+r_px_p+z)
 \end{gather*}
 is a holomorphic section of $\mathcal L(N,v)$ with quadratic form
 \begin{gather*}
 N(x)=\left(\sum_{i=1}^pr_ix_i\right)^2,
 \end{gather*}
 and linear form
 \begin{gather*}
 v(x)=z\sum_{i=1}^p r_ix_i.
 \end{gather*}
If $z=0$ this section vanishes precisely on the subtorus $\mathrm{Ker}(\phi_r)$, the kernel of the group homomorphism $\phi_r\colon E^p\to E$, $x\mapsto \sum_ir_ix_i$. Since an integral quadratic form is an integral linear combination of squares of integral linear forms, $\mathcal L(N,0)$ has a meromorphic section which is a~ratio of products of theta functions $\theta(r^tx)$ with $r\in\mathbb Z^p$.
\end{Remark}
\subsection{Admissible line bundles}
The elliptic dynamical quantum group acts on sections of admissible line bundles, which are, up to a~twist by a fixed line bundle, those coming from the base scheme $\hat E_{T}(\mathrm{pt})$. Let $p_T$ be the structure map
\begin{gather*}
 p_T\colon \ \hat E_T(X_{k,n})\to \hat E_T(\mathrm{pt}),
\end{gather*}
and $\hat\chi=\chi\times \mathrm{id}\times\mathrm{id}$ the characteristic map
\begin{gather*}
 \hat\chi \colon \ \hat E_T(X_{k,n})=E_A(\mathrm{Gr}(k,n))\times E\times E\to E^{(k)}\times E^{(n-k)}\times E\times E.
\end{gather*}
Let $t_1,\dots,t_k,s_1,\dots,s_{n-k},y,\lambda$ be coordinates on the universal covering of $E^k\times E^{n-k}\times E\times E$ and $N$ the quadratic form
\begin{gather}\label{e-Nkn}
 N_{k,n}(t,s,y,\lambda)=2\sum_{i=1}^k t_i(\lambda+(n-k)y)+\sum_{i=1}^k\sum_{j=1}^{n-k}(t_i-s_j)^2.
\end{gather}
Clearly $N_{k,n}$ is symmetric under permutation of the coordinates $t_i$ and of the coordinates $s_j$ and thus $\mathcal L(N_{k,n},0)$ can be considered as a bundle on $E^{(k)}\times E^{(n-k)}\times E\times E$.
\begin{Definition}\label{def-tlb}
 The {\em twisting line bundle} on $X_{k,n}$ is $\mathcal T_{k,n}=\hat\chi^*\mathcal L(N_{k,n},0)$
\end{Definition}
\begin{Definition} An {\em admissible line bundle} on $\hat E_T(X_{k,n})$ is a line bundle of the form
 \begin{gather*}
 p_T^*\mathcal L\otimes \mathcal T_{k,n},
 \end{gather*}
 for some line bundle $\mathcal L$ on $\hat E_T(\mathrm{pt})$.
\end{Definition}
\subsection{Holomorphic and meromorphic sections}\label{ss-ms}
We will consider meromorphic sections of line bundles on elliptic cohomology schemes. Since these schemes are singular, we need to be careful about the definition. Recall that $\hat E_T(X_{k,n})$ has components $Y_K=\iota_K\hat E_T(\mathrm{pt})$, corresponding to the inclusion of the fixed points $x_K$, labeled by subsets $K\subset[n]$ of cardinality $k$. We say that a meromorphic section on a complex manifold {\em restricts to a meromorphic section} on a submanifold if it is defined at its generic point, i.e., if the divisor of poles does not contain a component of the submanifold.
\begin{Definition}\label{def-mero} Let $\mathcal L$ be a line bundle on $\hat E_T(X_{k,n})$. A {\em meromorphic section} of $\mathcal L$ is a collection of meromorphic sections $s_I$ of $\mathcal L|_{Y_I}$, labeled by $I\subset[n]$ with $|I|=k$ and restricting to meromorphic sections on all intersections $Y_{I_1}\cap\cdots\cap Y_{I_s}$ and such that
 \begin{gather*}
 s_I|_{Y_I\cap Y_J}=s_J|_{Y_I\cap Y_J},
 \end{gather*}
for all $I$, $J$. A {\em holomorphic section} is a meromorphic section whose restriction to each $Y_I$ is holomorphic. We denote by $\Gamma(\hat E_T(X_{k,n}),\mathcal L)$ the space of holomorphic sections of $\mathcal L$ and by $\Gamma_{\mathrm{mer}}(\hat E_T(X_{k,n}),\mathcal L)$ the space of meromorphic sections of $\mathcal L$.
\end{Definition}

\subsection{Weight functions and admissible line bundles}
With the description of line bundles of Section~\ref{ss-alb1}, the weight functions $w^+_I(t_1,\dots,t_k,z_1,\dots,z_n$, $y,\lambda)$ can be viewed as sections of certain line bundles on $E^{(k)}\times E^{(n-k)}\times E^n\times E\times E$, namely as ($s$-independent) functions of the coordinates $(t_1,\dots,t_k,s_1,\dots,s_{n-k},z,y,\lambda)$ on the universal covering space, with proper multipliers under lattice translations. Their pull-back by the characteristic embedding
\begin{gather*}
 c \colon \ \hat E_T(X_{k,n})\hookrightarrow E^{(k)}\times E^{(n-k)}\times E^n\times E\times E,
\end{gather*}
see \eqref{e-ce}, is a section of the pull-back bundle and its restriction to $\iota_J\hat E_T(\mathrm{pt})$ is the evaluation of~$w^+_I$ at~$t=z_I$.

\begin{Proposition}\label{prop-8} Let $I\subset[n]$, $|I|=k$. Then the restriction $c ^*w^+_I$ of $w^+_I$ to $\hat E_T(X_{k,n})$ is a~meromorphic section of the admissible bundle $p_T^*\mathcal L_I\otimes \mathcal T_{k,n}$ for some line bundle $\mathcal L_I$ on $\hat E_T(\mathrm{pt})$. Moreover $\psi_Ic ^*w^+_I$ is holomorphic.
\end{Proposition}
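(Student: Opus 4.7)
The plan is to exhibit $\omega^+_I := \psi_I(y,\lambda) \cdot w^+_I$ as a holomorphic section of an explicit line bundle on $E^{(k)} \times E^n \times E \times E$, pull back along the characteristic embedding $c$, and identify the resulting line bundle with an admissible one.

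First, I would use the explicit formula from Section~\ref{ss-expli} to write
$$\omega^+_I(t;z,y,\lambda) = \operatorname{Sym}\left(\frac{\prod_{r=1}^k\prod_{a=1}^n l^+_I(r,a;t,z,y,\lambda)}{\prod_{1\leq i<j\leq k}\theta(t_i-t_j)\theta(t_j-t_i+y)}\right),$$
and observe that this is entire in all variables, since the apparent poles at $t_i=t_j$ cancel after symmetrization, as in the proof of Proposition~\ref{prop-0}. Applying the quasi-periodicity relations $\theta(z+1)=-\theta(z)$ and $\theta(z+\tau)=-{\rm e}^{-\pi\mathrm{i}\tau-2\pi\mathrm{i}z}\theta(z)$ factor-by-factor yields explicit multipliers for $\omega^+_I$ under the lattice shifts of each of $t_i, z_a, y, \lambda$. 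By Proposition~\ref{p-linebundles}, these multipliers identify $\omega^+_I$ as a holomorphic section of an explicit line bundle $\mathcal L(N_I, v_I)$ on $E^{(k)} \times E^n \times E \times E$.

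Next, since $\omega^+_I$ does not depend on the variables $s_j$, its pull-back via $c$ factors through the projection onto $E^{(k)} \times E^n \times E \times E$. On each fixed-point component $Y_K \cong \hat E_T(\mathrm{pt})$, where $c$ sends the base point to $(z_K, z_{\bar K}, z, y, \lambda)$, the pull-back becomes $\omega^+_I(z_K; z, y, \lambda)$, which is entire in $(z, y, \lambda)$ by Lemma~\ref{lemma-4}. The crucial computational step is to verify the identity
$$N_I(z_K, z, y, \lambda) = N_{k,n}(z_K, z_{\bar K}, y, \lambda) + M_I(z, y, \lambda),$$
together with the analogous identity for linear forms, where $M_I$ is a quadratic form depending only on $(z,y,\lambda)$ and independent of $K$. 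The decomposition uses the partition identity $\{z_K\} \sqcup \{z_{\bar K}\} = \{z\}$ to rewrite $K$-dependent expressions such as $\sum_{a\in K} z_a^2$ and $\sum_{a\in K, b\in \bar K}(z_a - z_b)^2$ in terms of $\sum_a z_a^2$, $(\sum_a z_a)^2$, and the $N_{k,n}$ contribution.

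Once this decomposition is established, let $\tilde{\mathcal L}_I$ be the line bundle on $\hat E_T(\mathrm{pt})$ with quadratic form $M_I$ and the corresponding linear form, and let $\mathcal L_\psi$ denote the line bundle on $E \times E$ whose canonical section is $\psi_I(y,\lambda)$, pulled back to $\hat E_T(\mathrm{pt})$ via the projection to the $(y,\lambda)$-factors. Then $c^*\omega^+_I$ defines a holomorphic section of $p_T^*\tilde{\mathcal L}_I \otimes \mathcal T_{k,n}$ in the sense of Definition~\ref{def-mero}; compatibility on the intersections $Y_K \cap Y_{K'}$ is automatic since $\omega^+_I$ is a single-valued function on the universal cover. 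Setting $\mathcal L_I := \tilde{\mathcal L}_I \otimes \mathcal L_\psi^{-1}$, we obtain $c^*w^+_I = c^*\omega^+_I/\psi_I$ as a meromorphic section of $p_T^*\mathcal L_I \otimes \mathcal T_{k,n}$ whose only poles are at the divisor of $\psi_I$. The final assertion that $\psi_I c^*w^+_I = c^*\omega^+_I$ is holomorphic is then exactly what was shown via Lemma~\ref{lemma-4}.

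The hard part will be the quadratic-form bookkeeping: carefully tallying, across every $\theta$-factor in the formula for $\omega^+_I$ (numerator factors $l^+_I(r,a)$, denominator $\prod\theta(t_i-t_j)\theta(t_j-t_i+y)$, and the $\psi_I$ normalization), the contribution to each entry of $N_I$ and to the linear form $v_I$, and verifying combinatorially that after the substitution $t = z_K$ the $K$-dependent portion matches $N_{k,n}(z_K, z_{\bar K}, y, \lambda)$ exactly. It is this cancellation, working simultaneously for every $K$, that encodes the compatibility between the weight-function construction and the fiber-product structure of $\hat E_T(X_{k,n})$.
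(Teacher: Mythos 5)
Your proposal follows essentially the same route as the paper's proof: identify the line bundle of the symmetrand via its lattice multipliers, restrict to each fixed-point component by substituting $t=z_K$, check that the resulting quadratic form splits as $N_{k,n}(z_K,z_{\bar K},y,\lambda)$ plus a $K$-independent piece coming from $\hat E_T(\mathrm{pt})$, and invoke Lemma~\ref{lemma-4} for the holomorphicity of $\psi_I c^*w^+_I$. One caution: what you call $\omega^+_I$ is actually $\psi_I w^+_I=\omega^+_I/\prod_{j\neq l}\theta(t_j-t_l+y)$ in the paper's notation, and your claim that it is entire in all variables is false --- the poles along $t_j-t_l+y\in\Lambda$ do not cancel under symmetrization --- but this is harmless, since the line bundle is determined by the multipliers alone and the regularity you actually need is after the substitution $t=z_K$, which is exactly what Lemma~\ref{lemma-4} provides.
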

\begin{proof} We need first to check that all terms in the sum over $S_k$ defining the symmetrization map Sym in Section~\ref{ss-expli} have the same transformation properties under shifts of the variables by the lattice so that they define sections of the same line bundle on $E^k\times E^n\times E^2$. The symmetrization map then produces a section symmetric under permutations of $t_i$, which is the same as a section of a line bundle on
 $E^{(k)}\times E^n\times E^2$. The transformation properties are encoded in the quadratic form: the argument of $\operatorname{Sym}$ is a~section of the line bundle $\mathcal L(M_I,0)$ with
 \begin{gather*}
 M_I(t,z,y,\lambda) =2\sum_{r=1}^k t_r(\lambda+(n-k)y)+\!\sum_{r=1}^k\sum_{a=1}^n (t_r-z_a)^2-2\!\!\!\sum_{1\leq r<s\leq k}\!\!(t_r-t_s)^2 + \tilde M_I(z,y,\lambda),
 \end{gather*}
where $\tilde M_I$ is independent of $t_1,\dots,t_k$. Since $M_I$ is symmetric under permutations of the variables~$t_i$ it defines an $S_n$-equivariant line bundle. All terms in the sum over permutations are sections of this line bundle and their sum is a symmetric section, i.e., the pull-back of a section on the quotient $E^{(k)}\times E^n\times E^2$, which we understand as a section on $E^{(k)}\times E^{(n-k)}\times E^n\times E^2$, constant along $E^{(n-k)}$. The restriction to the component $\iota_K\hat E_T(\mathrm{pt})$ of $\hat E_T(X_{k,n})$ is $w^+_I(z_K,z,y,\lambda)$, the result of substituting the variables $t_i$ by $z_K=(z_i)_{i\in K}$. It is a section of the line bundle with quadratic form
 \begin{gather*}
 M_I(z_K,z,y,\lambda) = 2\sum_{i\in K}z_i(\lambda+(n-k)y) +\sum_{i\in K}\sum_{a=1}^n(z_i-z_a)^2 -\sum_{i,j\in K}(z_i-z_j)^2+\tilde M_I(z,y,\lambda)\\
\hphantom{M_I(z_K,z,y,\lambda)}{} =2\sum_{i\in K}z_i(\lambda+(n-k)y) +\sum_{i\in K,\, j\in\bar K}(z_i-z_j)^2 +\tilde M_I(z,y,\lambda)\\
\hphantom{M_I(z_K,z,y,\lambda)}{}= N_{k,n}(z_K,z_{\bar K},y,\lambda)+\tilde M_I(z,y,\lambda),
 \end{gather*}
 cf.~\eqref{e-Nkn}. Thus the symmetrization is a section of the tensor product of the twisting bundle and the bundle with quadratic form $\tilde M_I$ which is independent of $K$ and thus comes from $\hat E_T(\mathrm{pt})=E^n\times E^2$. The section $\psi_Ic ^*w^+_I$ is holomorphic because of Lemma~\ref{lemma-4}.
\end{proof}

Thus $c ^*w_I^+$ is a meromorphic section of an admissible line bundle $p_T^*\mathcal L_I\otimes \mathcal T_{k,n}$ with poles on a~finite set of hypertori with equation $\lambda-jy=0$, $j\in\mathbb Z$, the divisors of zeros of $\psi_I$. The bundle~$\mathcal L_I$ can be calculated: $\mathcal L_I=\mathcal L(N_I,0)$ with
\begin{gather}
 N_I=-2\sum_{a\in\bar I}n(a,I)z_ay-2\sum_{a\in I}z_a(\lambda+n(a,\bar I)y)\nonumber\\
\hphantom{N_I=}{} +(k(n-k)-\sum_{a\in I}n(a,\bar I))y^2-\sum_{a\in I}(\lambda- (n(a,I)+1)y+n(a,\bar I)y)^2,\label{e-NI}
\end{gather}
see \eqref{eq-w} for the definition of $n(a,I)$. Let $D_I$ be the divisor of zeros of the section $\psi_I(y,\lambda)$ on~$\hat E_T(\mathrm{pt})$, $I\subset [n]$, see Corollary~\ref{cor-2}. Then the normalized weight function can be understood as a {\em holomorphic} section of an admissible bundle:
\begin{gather*}
c ^*w^+_I\in\Gamma(X_{k,n}, p_T^*\mathcal L_I(D_I)\otimes\mathcal T_{k,n}).
\end{gather*}
Here the notation $\mathcal L(D)$ means as usual the invertible sheaf of meromorphic sections of a~sheaf~$L$ whose poles are bounded by the divisor $D$.
\begin{Example} Let $n=1$. Then $w^+_\varnothing(t,z,y,\lambda)=1$ and $c ^*w_\varnothing=1$ is a section of the trivial bundle ($\mathcal L_{\varnothing}$ and $\mathcal T_{0,1}$ are both trivial). For $k=1$,
\begin{gather*}
w^+_{\{1\}}(t,z,y,\lambda)=\frac{\theta(\lambda+t-z)}{\theta(\lambda)\theta(\lambda-y)},
\end{gather*}
and $c ^*w^+_{\{1\}}$ is obtained by substituting $t=z$:
\begin{gather*}
c ^*w^+_{\{1\}}(z,y,\lambda)=\frac1{\theta(\lambda-y)}.
\end{gather*}
This is a meromorphic section of the line bundle $\mathcal L\big({-}(\lambda-y)^2,0\big)$ with a~simple pole at $\lambda=y$ on $\hat E_T(X_{1,1})=\hat E_T(\mathrm{pt})\cong E^3$. The quadratic form is composed from the quadratic forms $2z\lambda$ of~$\mathcal T_{1,1}$ and $\mathcal -2z\lambda-(\lambda-y)^2$ of~$\mathcal L_{\{1\}}$.
\end{Example}
\subsection{Elliptic cohomology classes and stable envelope}\label{ss-5.5}
Here we introduce an elliptic version of the Maulik--Okounkov {\em stable envelope}. It is constructed in terms of weight functions. In Appendix~\ref{sec:appendix} we give an axiomatic definition in the spirit of~\cite{MO}. It would be interesting to understand the relation of our definition with the one sketched in~\cite{AO}.

\begin{Definition} Let $\mathcal L\in\mathrm{Pic}\big(\hat E_T(\mathrm{pt})\big)$. A {\em $T$-equivariant elliptic cohomology class} on $X_{k,n}$ of degree $\mathcal L$ is a holomorphic section of the admissible bundle $p_T^*\mathcal L\otimes \mathcal T_{k,n}$ on $\hat E_T(X_{k,n})$. We denote by $ H_T^{\mathrm{ell}}(X_{k,n})_{\mathcal L}$ the vector space of $T$-equivariant elliptic cohomology classes of degree $\mathcal L$ on $X_{k,n}$. We denote by $H_T^{\mathrm{ell}}(X_n)_{\mathcal L}$ the $\mathfrak h$-module $\oplus_{k=0}^n H_T^{\mathrm{ell}}(X_{k,n})_{\mathcal L}$, with $k$-th direct summand of $\mathfrak h$-weight $-n+2k$.
\end{Definition}
\begin{Definition} The {\em stable envelope} is the map
\begin{gather}\label{e-stable1}
 \operatorname{Stab}\colon \ \big(\mathbb{C}^2\big)^{\otimes n}\to \oplus_{k=0}^{n}\oplus_{I\subset [n],\,|I|=k} H_T^{\mathrm{ell}}(X_{k,n})_{\mathcal L_I(D_I)},
\end{gather}
sending $v_I$ to the cohomology class $c ^*w_I^+$.
\end{Definition}
\begin{Remark} The basis vector $v_I$ should be viewed as the generator of the space of elliptic cohomology classes of the fixed point $x_I$, see Section~\ref{s-7} below.
\end{Remark}
\begin{Remark} The class $c ^*w_I^+$ has analogs in equivariant cohomology and equivariant $K$-theory of $X_{k,n}$, see \cite{GRTV,RTV,RTV2}. The analog of $c ^*w_I^+$ in equivariant cohomology is the equivariant Chern--Schwartz--MacPherson class of the open Schubert variety~$\Omega_I$, see~\cite{RV1}. Hence $c ^*w_I^+$ may be considered as an elliptic equivariant version of the Chern--Schwartz--MacPherson class.
\end{Remark}

\subsection{Sheaf of elliptic cohomology classes and theta functions}
Here we realize elliptic cohomology classes as sections of coherent sheaves on $\hat E_T(\mathrm{pt})$ and relate their sections to the theta functions with vanishing condition of Section~\ref{ss-3.3}.
\begin{Definition} Let $k=0,\dots,n$, $\mu=-n+2k$ and $\mathcal T_{k,n}$ be the twisting line bundle of Definition~\ref{def-tlb}. The {\em sheaf of elliptic cohomology classes of weight~$\mu$} is the sheaf
 \begin{gather*}
 \mathcal H_T^{\mathrm{ell}}(X_{k,n})=p_{T*}\mathcal T_{k,n}
 \end{gather*}
 on $\hat E_T(\mathrm{pt})$. Here $p_{T*}=(p_T)_*$ denotes the direct image by the structure map $p_T\colon \hat E_T(X_{k,n})\to \hat E_T(\mathrm{pt})$.
\end{Definition}
By the projection formula, $\mathcal L\otimes p_{T*}\mathcal T_{k,n}\cong p_{T*}(p_T^*\mathcal L\otimes \mathcal T_{k,n})$ for any line bundle $\mathcal L\in \mathrm{Pic}\big(\hat E_T(\mathrm{pt})\big)$. Thus a section of $\mathcal H_T^{\mathrm{ell}}(X_{k,n})\otimes \mathcal L$ on an open set $U$ is a section of the admissible line bundle $p_T^*\mathcal L\otimes \mathcal T_{k,n}$ on $p_T^{-1}(U)$. In particular,
\begin{gather*}
 H^{\mathrm{ell}}_T(X_{k,n})_{\mathcal L}=\Gamma\big(\hat E_T(\mathrm{pt}),\mathcal H^{\mathrm{ell}}_T(X_{k,n})\otimes\mathcal L\big).
\end{gather*}

The space $\Theta^+_k(z,y,\lambda)$ of theta functions introduced in Section \ref{ss-3.1} is the fiber of a vector bundle $\Theta^+_{k,n}$ on $\hat E_T(\mathrm{pt})$. In the language of Section~\ref{ss-alb1}, $\Theta^+_{k,n}=p_*\mathcal L(N^\Theta_{k,n},0)$ is the direct image by the projection $p\colon E^{(k)}\times\hat E_T(\mathrm{pt})\to \hat E_T(\mathrm{pt})$ onto the second factor of the line bundle associated with the quadratic form
\begin{gather}\label{e-NTheta}
 N^\Theta_{k,n} =2\sum_{i=1}^kt_i(\lambda+(n-k)y)+\sum_{i=1}^k\sum_{a=1}^n(t_i-z_a)^2+k(k-1)y^2.
\end{gather}
Here, as usual, the $t_i$ are coordinates on the universal covering of $E^k$ and $z_a$, $y$, $\lambda$ are coordinates on the universal covering of $\hat E_T(\mathrm{pt})$. In fact only the terms involving $t_i$ in $N^\Theta_{k,n}$ are determined by the transformation properties of the fibers $\Theta^+_k(z,y,\lambda)$. We choose the remaining terms to simplify the formulation of Theorem~\ref{th-0} below.

The space of theta functions $\bar\Theta^+_k(z,y,\lambda)$ satisfying the vanishing condition of Section~\ref{ss-3.3} is the generic fiber of a coherent subsheaf $\bar\Theta^+_{k,n}$ of $\Theta^+_{k,n}$ on $\hat E_T(\mathrm{pt})$ (it is the intersection of kernels of morphisms $\mathrm{ev}_{z_a,2}$ of coherent sheaves). The sheaves $\bar\Theta^+_{k,n}$ and $\mathcal H_T^{\mathrm{ell}}(X_{k,n})$ are closely related: there is a morphism
\begin{gather*}
 \varphi\colon \ \bar\Theta^+_{k,n}\to \mathcal H_T^{\mathrm{ell}}(X_{k,n}),
\end{gather*}
defined as follows. A section of $\bar\Theta^+_{k,n}$ on an open set $U$ is given by a~function $f(t;z,y,\lambda)$ on $\mathbb C^k\times U$, which, as a function of~$t$ belongs to $\Theta^+_k(z,y,\lambda)$ and obeys the vanishing conditions
\begin{gather*}
 f(z_a,z_a-y,t_3,\dots,t_k;z,y,\lambda)=0, \qquad a=1,\dots,n.
\end{gather*}
The morphism $\varphi$ sends $f$ to $(\varphi_If)_{I\subset [n],|I|=k}$ where $\varphi_If$ is the restriction of $\varphi f$ to $\iota_I \hat E_T(\mathrm{pt})\cong\hat E_T(\mathrm{pt})$:
\begin{gather}\label{e-phi}
 \varphi_If(z,y,\lambda)= \frac{f(t;z,y,\lambda)}{\prod\limits_{i\neq j} \theta(t_i-t_j+y)}\bigg|_{t=z_I},
\end{gather}
cf.~Section \ref{ss-3.9}.
\begin{Theorem}\label{th-0} Let $D\subset \hat E_T(\mathrm{pt})$ be the union of the hypertori $z_a=z_b+y$, $1\leq a\neq b\leq n$ and $\lambda=jy$, $-n\leq j\leq n$. The map $\varphi\colon f\mapsto (\varphi_If)_{I\subset [n], |I|=k}$ given by formula~\eqref{e-phi} is a well-defined injective morphism of $\mathcal O_{\hat E_{T}(\mathrm{pt})}$-modules
 \begin{gather*}
 \varphi\colon \ \bar\Theta^+_{k,n}\hookrightarrow \mathcal H^{\mathrm{ell}}_T(X_{k,n}),
 \end{gather*}
 which is an isomorphism on $\hat E_T(\mathrm{pt})\smallsetminus D$.
\end{Theorem}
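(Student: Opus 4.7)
The plan is to establish, in order, four assertions: well-definedness of each component $\varphi_I f$, compatibility on intersections so that $(\varphi_I f)_I$ is a section in the sense of Definition~\ref{def-mero}, injectivity, and the isomorphism off $D$.

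For well-definedness, I would first show that each $\varphi_I f$, a priori meromorphic on $Y_I\cong\hat E_T(\mathrm{pt})$, is holomorphic. The only possible poles come from the zeros of $\prod_{r\neq s}\theta(z_{i_r}-z_{i_s}+y)$ along the hypertori $\{z_{i_r}=z_{i_s}-y\}$; but on such a hypertorus, setting $a=i_s$ in the vanishing condition gives $f(z_I;z,y,\lambda)=0$, and since the codimension-two locus $\{t_r=z_a-y,\,t_s=z_a\}\subset\mathbb C^k\times\hat E_T(\mathrm{pt})$ is cut out by two equations, the numerator vanishes to the same order as the denominator. This is Lemma~\ref{lemma-4} applied to an arbitrary section of $\bar\Theta^+_{k,n}$. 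Compatibility on intersections is automatic: $Y_I\cap Y_J$ is the locus where $(z_i)_{i\in I}=(z_j)_{j\in J}$ as elements of $E^{(k)}$, and the $S_k$-symmetry of both numerator and denominator forces $\varphi_If=\varphi_Jf$ there. To identify the line bundle on $Y_I$ as $p_T^*\mathcal L_I\otimes\mathcal T_{k,n}|_{Y_I}$, I substitute $t=z_I$ into the quadratic form $N^\Theta_{k,n}$ of \eqref{e-NTheta} and subtract the quadratic form of $\prod_{r\neq s}\theta(t_r-t_s+y)$: the result is $N_{k,n}(z_I,z_{\bar I},y,\lambda)$ plus an $I$-independent term pulled back from $\hat E_T(\mathrm{pt})$, exactly as in Proposition~\ref{prop-8}.

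For injectivity, Lemma~\ref{lemma-5}(ii) shows that $\varphi\omega^+_I=(\psi_I\,w^+_I(z_J;z,y,\lambda))_J$ is, off $D$, a triangular array with nonzero diagonal entries, so $\varphi$ is injective on the generic fiber. Since $\bar\Theta^+_{k,n}$ is a subsheaf of the locally free sheaf $p_*\mathcal L(N^\Theta_{k,n},0)$ and therefore torsion-free, injectivity of the sheaf map follows.

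For the isomorphism off $D$, both sheaves are locally free of rank $\binom{n}{k}$ on $\hat E_T(\mathrm{pt})\smallsetminus D$ (Corollary~\ref{cor-0} for the source; the moment-graph description of Proposition~\ref{p-001} for the target), so it suffices to construct a local inverse. Given a local section $(s_I)_I$, I would define $f=\sum_K c_K\omega^+_K$ with
\begin{equation*}
 c_K = \frac{\sum_I w^-_K(z_I;z,y,\lambda)\,s_I}{\psi_K(y,\lambda)\,\prod_{a\in K,\,b\in\bar K}\theta(z_a-z_b)\,\theta(z_a-z_b+y)},
\end{equation*}
and use Corollary~\ref{p-ortho2} to verify that $\varphi_J(f)=s_J$ for all $J$. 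The main obstacle is showing that $f$ is holomorphic on all of $\hat E_T(\mathrm{pt})\smallsetminus D$: the factor $\psi_K$ is invertible off $D$, but the factors $\theta(z_a-z_b)$ in the denominator vanish on the diagonals $\{z_a=z_b\}$, which are \emph{not} in $D$. The resolution is that the gluing condition of Definition~\ref{def-mero} forces $s_I=s_{I'}$ on $\{z_a=z_b\}$ whenever $I,I'$ differ by the swap $a\leftrightarrow b$, and the analogous antisymmetry of $w^-_K(z_I)$ in a pair of arguments lying on such a diagonal produces exactly the cancellations required in the numerator of $c_K$. Making this cancellation precise, by a residue-type argument parallel to Lemma~\ref{lemma-2} and Proposition~\ref{prop-4}(ii), is the technical heart of the proof; once it is done, matching generic ranks upgrades the injective morphism to an isomorphism on the entire complement of $D$.
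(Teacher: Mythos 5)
Your proposal follows essentially the same route as the paper's proof: holomorphy of each $\varphi_I f$ via the vanishing condition (Lemma~\ref{lemma-4} applied to arbitrary sections), identification of the line bundle by comparing the quadratic form $N^\Theta_{k,n}-\sum_{i\neq j}(t_i-t_j+y)^2$ restricted to $t=z_I$ with $N_{k,n}$ as in Proposition~\ref{prop-8}, injectivity from the generic basis property and the triangularity of Lemma~\ref{lemma-5}, and an explicit local inverse built from the dual weight functions $w^-_K$ and the orthogonality relations, with regularity at the diagonals $z_a=z_b$ coming from the gluing condition on $(s_I)$. One correction to your inverse formula: the product $\prod\theta(z_a-z_b)\theta(z_a-z_b+y)$ in the denominator must be taken over $a\in I$, $b\in\bar I$ and placed \emph{inside} the sum over $I$, i.e.\ $f=\sum_{I,K}\psi_K^{-1}\,w^-_K(z_I)\,s_I\,\omega^+_K\big/\prod_{a\in I,\,b\in\bar I}\theta(z_a-z_b)\theta(z_a-z_b+y)$; with your $K$-indexed denominator pulled outside the sum, Corollary~\ref{p-ortho2} does not collapse the double sum to $s_J$.
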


\begin{proof} We first prove that the morphism is well-defined. The function $\varphi_If$ of $z$, $y$, $\lambda$ defines a~section of the line bundle $\mathcal L(Q,0)$ with quadratic form $Q=\Big(N^\Theta_{k,n}-\sum\limits_{i\neq j}(t_i-t_j+y)^2\Big)\big|_{t=z_I}$. An explicit calculation shows that
 \begin{gather*}
 Q=N_{k,n}|_{t=z_I,\, s=z_{\bar I}}.
 \end{gather*}
 It follows that $\varphi f$ is a meromorphic section of $p_{T*}\mathcal T_{k,n}$. By Lemma~\ref{lemma-4} (which applies to any symmetric theta function obeying the vanishing condition), $\varphi f$ is actually holomorphic.

To show that the morphism is injective, we use the fact that the weight functions $\omega_I^+$ form a~basis of $\bar\Theta^+_k(z,y,\lambda)$ at the generic point of $\hat E_T(\mathrm{pt})$, see Proposition~\ref{p-basis}. Thus every local section of $\bar\Theta^+_{k,n}$ can be written as linear combination of normalized weight functions with meromorphic coefficients. If this section is in the kernel of our morphism then its restriction to each component vanishes. By the triangularity property of weight functions of Lemma~\ref{lemma-5} all coefficients must vanish and the kernel is trivial.

 We now construct the inverse map on the complement of $D$. A section $s$ of $\mathcal H^{\mathrm{ell}}_T(X_{k,n})$ on an open set $U$ is a collection of sections $s_I$ of $\mathcal T_{k,n}$ on the various components of $p_T^{-1}(U)$ and agreeing on intersections. Then $f=\varphi^{-1}s$ is
 \begin{gather*}
 f(t;z,y,\lambda)= \sum_{I,K} \psi_K(y,\lambda)^{-1} \frac{w_K^-(z_I,z,y,\lambda)s_I(z,y,\lambda)} {\prod\limits_{a\in I,\,b\in\bar I}\theta(z_a-z_b)\theta(z_a-z_b+y)} \omega_K^+(t,z,y,\lambda).
 \end{gather*}
 It is easy to check that this is a meromorphic section of $\bar\Theta^+_{k,n}$ on $U$ with poles at $z_b=z_a+y$, $1\leq a\neq b\leq n$ and at the zeros of $\psi_K$. It is regular at the apparent poles at $z_a=z_b$ since the sections $s_I$ agree on intersections of the components. Let us compute $\varphi f$:
 \begin{gather*}
 \frac{f(t;z,y,\lambda)} {\prod\limits_{i\neq j}\theta(t_i-t_j+y)} =\sum_{I,K}\frac{w_K^-(z_I,z,y,\lambda)s_I(z,y,\lambda)} {\prod\limits_{a\in I,\,b\in\bar I}\theta(z_a-z_b)\theta(z_a-z_b+y)} w_K^+(t,z,y,\lambda).
 \end{gather*}
 The orthogonality relations, see Corollary \ref{p-ortho2}, imply
 \begin{gather*}
 \varphi_Jf =\sum_{I,K} \frac{w_K^-(z_I,z,y,\lambda) s_I(z,y,\lambda)} {\prod\limits_{a\in I,\, b\in\bar I}\theta(z_a-z_b) \theta(z_a-z_b+y)}w_K^+(z_J,z,y,\lambda) =s_J(z,y,\lambda).\tag*{\qed}
 \end{gather*}\renewcommand{\qed}{}
\end{proof}

\subsection[Symmetric group and $G$-equivariant cohomology classes]{Symmetric group and $\boldsymbol{G}$-equivariant cohomology classes}

The symmetric group $S_n$ on $n$ letters acts on $\mathbb C^n$ by permutation of coordinates. This action induces an action of $S_n$ on the Grassmannians, their cotangent bundles $X_{k,n}$ and on $T$ so that the action map
$T\times X_{k,n}\to X_{k,n}$ is $S_n$-equivariant. The induced action on the cohomology schemes $\hat E_T(X_{k,n})$ can be easily described: on $\hat E_T(\mathrm{pt})=E^n\times E^2$, $S_n$ acts by permutations of the first $n$ factors and $\sigma\in S_n$ sends the component $\iota_K\hat E_T(\mathrm{pt})$ of $\hat E_T(X_{k,n})$ to $\iota_{\sigma(K)}\hat E_T(\mathrm{pt})$ so that the diagram
\begin{gather*}
 \begin{array}{@{}c} \hat E_T(\mathrm{pt})
 \stackrel{\sigma}\longrightarrow \hat E_T(\mathrm{pt})\\
 \iota_K\searrow \quad\swarrow\iota_{\sigma(K)}\\ \hat E_T(X_{k,n})
 \end{array}
\end{gather*}
commutes for any $K\subset[n]$ with $|K|=k$ elements. The structure map $\hat E_T(X_{k,n})\to \hat E_T(\mathrm{pt})$ is $S_n$-equivariant and the quotient by the action of $S_n$ is the $G$-equivariant elliptic cohomology scheme.

\begin{Lemma}\label{lemma-8} The twisting bundle is $S_n$-equivariant, i.e., the $S_n$-action lifts to an $S_n$ action on the bundle.
\end{Lemma}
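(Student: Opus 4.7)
The plan is to exhibit the equivariant structure by showing that the characteristic map $\hat\chi$ is itself $S_n$-invariant, with the trivial action on the target, and then pulling back the identity $\sigma^*\hat\chi^*\mathcal L(N_{k,n},0)=\hat\chi^*\mathcal L(N_{k,n},0)$.

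First I would recall the explicit description of the $S_n$-action from the paragraph preceding the Lemma: $\sigma\in S_n$ sends the component $\iota_K\hat E_T(\mathrm{pt})$ of $\hat E_T(X_{k,n})$ to $\iota_{\sigma(K)}\hat E_T(\mathrm{pt})$ and acts on $\hat E_T(\mathrm{pt})=E^n\times E\times E$ by permuting the first $n$ factors (leaving the $y,\lambda$ factors alone). The characteristic map, on the component $\iota_K\hat E_T(\mathrm{pt})$, is given in coordinates by $(z,y,\lambda)\mapsto(z_K,z_{\bar K},y,\lambda)\in E^{(k)}\times E^{(n-k)}\times E\times E$, where $z_K$ and $z_{\bar K}$ are regarded as unordered multisets.

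The key computation is that $\hat\chi\circ\sigma=\hat\chi$ as morphisms of schemes. Indeed, on the component $\iota_K\hat E_T(\mathrm{pt})$ the action $\sigma$ sends $\iota_K(z,y,\lambda)$ to $\iota_{\sigma(K)}(\sigma\cdot z,y,\lambda)$, and applying $\hat\chi$ to the latter yields $((\sigma\cdot z)_{\sigma(K)},(\sigma\cdot z)_{\overline{\sigma(K)}},y,\lambda)$. Since $(\sigma\cdot z)_j=z_{\sigma^{-1}(j)}$, as $j$ ranges over $\sigma(K)$ the index $\sigma^{-1}(j)$ ranges over $K$, so the two multisets $(\sigma\cdot z)_{\sigma(K)}$ and $z_K$ coincide as points of $E^{(k)}$; similarly for $E^{(n-k)}$. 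Hence $\hat\chi\circ\sigma|_{\iota_K\hat E_T(\mathrm{pt})}=\hat\chi|_{\iota_K\hat E_T(\mathrm{pt})}$, and gluing over all $K$ gives the desired equality of global maps.

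From this equality it follows immediately that
\begin{equation*}
\sigma^*\mathcal T_{k,n}=\sigma^*\hat\chi^*\mathcal L(N_{k,n},0)=(\hat\chi\circ\sigma)^*\mathcal L(N_{k,n},0)=\hat\chi^*\mathcal L(N_{k,n},0)=\mathcal T_{k,n},
\end{equation*}
with the isomorphism being the tautological identity of pullback bundles. Because these isomorphisms arise from honest equalities of the underlying morphisms, the cocycle condition $(\sigma\tau)^*\cong\tau^*\sigma^*$ holds strictly, so they assemble into an $S_n$-equivariant structure. No subtlety is expected here; the only small point to verify is that the action on coordinates $z_1,\dots,z_n$ together with the permutation of components is indeed compatible with the multiset identification at the level of the target, which is exactly the computation above.
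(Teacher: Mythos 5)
Your proposal is correct and is essentially the paper's own argument: the paper proves the lemma in one line by observing that $\mathcal T_{k,n}$ is the pull-back of a bundle on $E^{(k)}\times E^{(n-k)}\times E\times E$ by the $S_n$-equivariant map $\hat\chi$ (with trivial action on the target), which is exactly the identity $\hat\chi\circ\sigma=\hat\chi$ that you verify in coordinates. Your write-up merely makes explicit the multiset computation and the strict cocycle condition that the paper leaves implicit.
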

\begin{proof} This follows since the twisting bundle is the pull-back by an $S_n$-equivariant map of a~bundle on $E^{(k)}\times E^{(n-k)}$ on which the action of the symmetric group is trivial.
\end{proof}

In particular for each $\sigma\in S_n$ and admissible line bundle $\mathcal M$, we have an admissible line bundle~$\sigma^*\mathcal M$ and a map
\begin{gather*} \sigma^*\colon \ \Gamma_{\mathrm{mer}}\big(\hat E_T(X_{k,n}),\mathcal M\big)\to \Gamma_{\mathrm{mer}}\big(\hat E_T(X_{k,n}),\sigma^*\mathcal M\big),
\end{gather*}
and also a map
\begin{gather*}
 \sigma^*\colon \ \mathcal H^{\mathrm{ell}}_T(X_{k,n})\to \sigma^*\mathcal H^{\mathrm{ell}}_T(X_{k,n}).
\end{gather*}
Let $\pi\colon \hat E_T(\mathrm{pt})\to \hat E_G(\mathrm{pt})=\hat E_T(\mathrm{pt})/S_n$. Then we obtain an action of the symmetric group on $\pi_*\mathcal H^{\mathrm{ell}}_T(X_{k,n})$.
\begin{Definition}\label{def-G-equiv} Let $G=U(n)\times U(1)$. The {\em sheaf of $G$-equivariant elliptic cohomology classes} is
 \begin{gather*}
 \mathcal H^{\mathrm{ell}}_G(X_{k,n})=\pi_*\mathcal H^{\mathrm{ell}}_T(X_{k,n})^{S_n},
 \end{gather*}
a coherent sheaf on $\hat E_G(\mathrm{pt})=E^{(n)}\times E\times E$. Let $\mathcal L\in\mathrm{Pic}\big(\hat E_G(\mathrm{pt})\big)$. The space of $G$-equivariant elliptic cohomology classes of degree $\mathcal L$ on $X_{k,n}$ is $H^{\mathrm{ell}}_G(X_{k,n})_{\mathcal L}=\Gamma\big(\hat E_G(\mathrm{pt}),\mathcal H_G^{\mathrm{ell}}(X_{k,n})\otimes\mathcal L\big)$. We set $H^{\mathrm{ell}}_G(X_n)_{\mathcal L}$ to be the $\mathfrak h$-module$\oplus_{k=0}^n H^{\mathrm{ell}}_G(X_{k,n})_{\mathcal L}$, with the summandlabeled by $k$ of $\mathfrak h$-weight $-n+2k$.
\end{Definition}

\subsection{Admissible difference operators}
Recall that $\hat E_T(X_{k,n})$ has a factor $E\times E$ corresponding to the $U(1)$-action on the cotangent spaces and the dynamical parameter. For $j\in\mathbb Z$, let $\tau_j=\tau_1^j$ be the automorphism of $E\times E$ such that
\begin{gather*} \tau_j(y,\lambda)=(y,\lambda+jy).
\end{gather*} Denote also by $\tau_j$ the automorphism $\mathrm{id}\times \tau_j$ of $\hat E_T(X_{k,n})=E_A(\mathrm{Gr}(k,n))\times E\times E$. If $\mathcal L$ is a line bundle on $\hat E_T(X_{k,n})$ then $\tau_j$ lifts to a (tautological) bundle map $\mathcal L\to \tau_j^*\mathcal L$, also denoted by $\tau_j^*$. It maps meromorphic sections to meromorphic sections and is thus a~well-defined operator
\begin{gather*}
 \tau^*_j\colon \ \Gamma_{\mathrm{mer}}(\hat E_T(X_{k,n}),\mathcal L)\to\Gamma_{\mathrm{mer}}(\hat E_T(X_{k,n}),\tau^*_j\mathcal L).
\end{gather*}
\begin{Definition}\label{def-adm1} Let $k=0,\dots,n$ and $\mathcal L$ be a line bundle on $\hat E_T(\mathrm{pt})$, $\mu\in 2\mathbb Z$, $\nu\in\mathbb Z$. An {\em admissible difference operator} on meromorphic sections of an
admissible line bundle $\mathcal M_1=p_T^*\mathcal L_1\otimes \mathcal T_{k,n}$ on $\hat E_T(X_{k,n})$ of degree $(\mathcal L,\mu,\nu)$ is a~linear map $\varphi\colon\Gamma_{\mathrm{mer}}\big(\hat E_T(X_{k,n}), \mathcal M_1\big)\to \Gamma_{\mathrm{mer}}\big(\hat E_T(X_{k+\mu,n}),\mathcal M_2)\big)$ such that
 \begin{itemize}\itemsep=0pt
 \item[(i)] $\mathcal M_2$ is the admissible bundle $p_T^*\mathcal L_2\otimes \mathcal T_{k+\mu/2,n}$ with $\mathcal L_2=\mathcal L\otimes \tau_\nu^*\mathcal L_1$.
 \item[(ii)] For each section $s$ of $\mathcal M_1$ and fixed point
 $x_K\in X_{k+\mu,n}$,
 \begin{gather}\label{e-do}
 \iota_K^*\varphi(s)=\sum_{K'}\varphi_{K,K'}\tau_{\nu}^*\iota_{K'}^*s
 \end{gather}
 for some sections $\varphi_{K,K'}\in\Gamma_{\mathrm{mer}}\big(\hat E_T(\mathrm{pt}),\iota^*_K\mathcal M_2\otimes\iota^*_{K'}\tau^*_\nu \mathcal M_1^{-1}\big)$.
 \end{itemize}
\end{Definition}
By inserting the definition, we see that the line bundle of which $\varphi_{K,K'}$ is a section is
\begin{gather*}
 \iota_K^*\mathcal M_2\otimes \tau_\nu^*\iota_{K'}^*\mathcal M_1^{-1}= \mathcal L\otimes \iota^*_{K}\mathcal T_{k+\mu/2,n}\otimes \tau_\nu^*\iota_{K'}^*\mathcal T_{k,n}^{-1}.
\end{gather*}
This line bundle is independent of the admissible line bundle the operator acts on. It thus makes sense to let the same admissible difference operator act on sections of different admissible line bundles. We set
\begin{gather*}
 \mathcal A_{k,n}(\mathcal L,\mu,\nu), \qquad k=0,\dots,n,\qquad 0\leq k+\mu\leq n,
\end{gather*}
to be the space of admissible difference operators of degree $(\mathcal L,\mu,\nu)$.

It is convenient to extend the above definitions to the case of varying $k$. We denote by $X_n=\sqcup_{k=0}^n X_{k,n}$ the disjoint union of cotangent bundles to Grassmannians of subspaces of all dimensions in $\mathbb C^n$. The extended elliptic cohomology scheme
is then
\begin{gather*}
 \hat E_T(X_n)=\sqcup_{k=0}^n \hat E_T(X_{k,n}).
\end{gather*}
It comes with a map $p_T\colon \hat E_T(X_n)\to \sqcup_{k=0}^n\hat E_T(\mathrm{pt})$.

\begin{Definition} An {\em admissible line bundle} on $\hat E_T(X_n)$ is a line bundle whose restriction to each $\hat E_T(X_{k,n})$ is admissible. Let $\mathcal L=\big(\mathcal L^0,\dots,\mathcal L^n\big)$ be a line bundle on $\sqcup_{k=0}^n\hat E_T(\mathrm{pt})$, $\mu,\nu\in\mathbb Z$. An {\em admissible difference operator} of degree $(\mathcal L,\mu,\nu)$ acting on sections of an admissible line bundle~$\mathcal M_1$ is a linear map $\Gamma_{\mathrm{mer}}(\hat E_T(X_n),\mathcal M_1)\to \Gamma_{\mathrm{mer}}(\hat E_T(X_n),\mathcal M_2)$ restricting for each $k=0,\dots,n$ such that $k+\mu\in\{0,\dots,n\}$ to an admissible difference operator
 \begin{gather*}
 \Gamma_{\mathrm{mer}}\big(\hat E_T(X_{k,n}),\mathcal M_1|_{X_{k,n}}\big)\to\Gamma_{\mathrm{mer}}\big(\hat E_T(X_{k+\mu,n}),\mathcal M_2|_{X_{k+\mu,n}}\big),
 \end{gather*}
 of degree $(\mathcal L^k,\mu,\nu)$. We denote by
 \begin{gather*}
 \mathcal A_n(\mathcal L,\mu,\nu)=\oplus_{0\leq k,k+\mu\leq n} A_{k,n}\big(\mathcal L^k,\mu,\nu\big)
 \end{gather*}
 the space of admissible difference operators of degree $(\mathcal L,\mu,\nu)$.
\end{Definition}
\begin{Remark} We will not need to consider operators on components for $k$ such that $k+\mu\not\in \{0,\dots,n\}$. However to have a~correct definition we may set $\hat E_T(X_{k,n})$ to be the empty set if $k\not\in\{0,\dots,n\}$ and declare the space of sections of any line bundle on the empty set to be the zero vector space.
\end{Remark}

\subsection{Left and right moment maps}

Examples of admissible difference operators are multiplication operators by sections of pull-backs of line bundles on $\hat E_T(\mathrm{pt})$. A subclass of these operators appear as coefficients in the defining relations of
the quantum group: they are the entries of the $R$-matrix and are functions of the dynamical and deformation parameter, and appear in the relations in two different guises: with and without ``dynamical shift''. We borrow the terminology of \cite[Section~3]{EtingofVarchenko}, where these two appearances are called the left and right moment maps.

Let $\mathcal L$ be a line bundle on $\hat E_T(\mathrm{pt})$. We define two line bundles $\mu_\ell\mathcal L$, $\mu_r\mathcal L$ on $\sqcup_{k=0}^n\hat E_T(\mathrm{pt})$:
\begin{itemize}\itemsep=0pt
\item $\mu_r\mathcal L$ is the line bundle $(\mathcal L,\dots,\mathcal L)$;
\item $\mu_\ell\mathcal L$ is the line bundle $(\tau_n^*\mathcal L,\tau_{n-2}^*\mathcal L,\dots,\tau_{-n}^*\mathcal L)$.
\end{itemize}
\begin{Definition} The {\em left moment map} is the map
 \begin{gather*}
 \mu_\ell\colon \ \Gamma_{\mathrm{mer}}(\hat E_T(\mathrm{pt}),\mathcal L)\to\mathcal A_n(\mu_\ell\mathcal L,0,0),
 \end{gather*}
 sending a section $s$ to the operator whose restriction to $\mathcal A_{k,n}$ is the multiplication by $\tau_{n-2k}^*p_T^*s$.

The {\em right moment map} is the map $\mu_r\colon\Gamma_{\mathrm{mer}}\big(\hat E_T(\mathrm{pt}),\mathcal L\big) \to\mathcal A_n(\mu_r\mathcal L,0,0)$ sending $s$ to the operator whose component in $\mathcal A_{k,n}$ is
the multiplication by $p_T^*s$.
\end{Definition}
\subsection{Sections of admissible bundles as a module over multiplication operators}

Let $\mathcal L\!\in\!\mathrm{Pic}\big(\hat E_T(\mathrm{pt})\big)$ and set $\mathcal A^0_n(\mathcal L)=(\mathcal A_n(\mathcal L,0,0))$. Then the family $\mathcal A_n^0=\big(\mathcal A^0_n(\mathcal L)\big)_{\mathcal L \in \mathrm{Pic}(\hat E_T(\mathrm{pt}))}$ is a commutative subalgebra graded by the Picard group of the base. It acts on meromorphic sections of admissible bundles by mapping $\Gamma_{\mathrm{mer}}(X_{k,n},\mathcal M)$ to $\Gamma_{\mathrm{mer}}(X_{k,n},\mathcal M\otimes p_T^*\mathcal L)$ for any admissible line bundle $\mathcal M$. Then the weight functions form a system of free generators of the module of sections of admissible line bundles over $\mathcal A^0_n$ in the following sense.
\begin{Theorem}\label{th-1} Let $\mathcal L\in\mathrm{Pic}\big(\hat E_T(\mathrm{pt})\big)$. Every section $\omega\in \Gamma_{\mathrm{mer}}\big(\hat E_T(X_{k,n}),p_T^*\mathcal L\otimes\mathcal T_{k,n}\big)$ can be uniquely written as
 \begin{gather*}
 \omega=\sum_{I\subset [n],\,|I|=k} a_I \operatorname{Stab}(v_I),
 \end{gather*}
 for some $a_I\in\mathcal A^0_n\big(\mathcal L\otimes\mathcal L_I^{-1}\big)$, where $L_I$ is the line bundle of Proposition~{\rm \ref{prop-8}}.
\end{Theorem}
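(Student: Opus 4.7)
The plan is to produce an explicit formula for the coefficients $a_I$ by inverting the evaluation matrix $\bigl(w^+_I(z_K;z,y,\lambda)\bigr)_{I,K}$ at the fixed points via the orthogonality relations, and then verify that the resulting expression is a well-defined multiplication operator of the prescribed degree.

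By Definition~\ref{def-mero}, a meromorphic section $\omega$ of $p_T^*\mathcal L\otimes\mathcal T_{k,n}$ on $\hat E_T(X_{k,n})$ is a compatible family of sections $\omega_K=\iota_K^*\omega$ on the components $Y_K\cong\hat E_T(\mathrm{pt})$, and $\iota_K^*\operatorname{Stab}(v_I)=c^*w^+_I|_{Y_K}=w^+_I(z_K;z,y,\lambda)$. A multiplication operator $a_I\in\mathcal A^0_n(\mathcal L\otimes\mathcal L_I^{-1})$ acts on each $Y_K$ by the same section of $p_T^*\mathcal L\otimes p_T^*\mathcal L_I^{-1}$, so the decomposition $\omega=\sum_I a_I\operatorname{Stab}(v_I)$ is equivalent to the linear system
\begin{gather*}
\omega_K=\sum_{|I|=k}a_I\cdot w^+_I(z_K;z,y,\lambda),\qquad K\subset[n],\ |K|=k.
\end{gather*}
Proposition~\ref{p-ortho} exhibits the inverse of $(w^+_I(z_K))_{I,K}$ in terms of $w^-_I$, which yields the candidate formula
\begin{gather*}
a_I=\sum_{|K|=k}\frac{\omega_K\cdot w^-_I(z_K;z,y,\lambda)}{\prod\limits_{a\in K,\,b\in\bar K}\theta(z_a-z_b)\theta(z_a-z_b+y)}.
\end{gather*}
That $\sum_I a_I\operatorname{Stab}(v_I)$ reproduces $\omega$ fixed-point by fixed-point is then immediate from the complementary orthogonality in Corollary~\ref{p-ortho2}.

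Next, I would verify that $a_I$ is a meromorphic section of $\mathcal L\otimes\mathcal L_I^{-1}$ on $\hat E_T(\mathrm{pt})$. The transformation properties under the lattice are bookkeeping: by Proposition~\ref{prop-8}, $w^+_I(z_K;z,y,\lambda)$ is a section of $\iota_K^*(p_T^*\mathcal L_I\otimes\mathcal T_{k,n})$, $\omega_K$ is a section of $\iota_K^*(p_T^*\mathcal L\otimes\mathcal T_{k,n})$, and an analogous computation for $w^-_I$ combined with the denominator shows that each summand lies in $\mathcal L\otimes\mathcal L_I^{-1}$ independently of $K$. The nontrivial point is that each summand has apparent simple poles along the diagonals $\{z_a=z_b\}$ with $a\in K$, $b\in\bar K$, whereas a section pulled back from $\hat E_T(\mathrm{pt})$ must be regular there.

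The cancellation of these spurious diagonal poles is the heart of the argument. I would pair the term indexed by $K$ with the one indexed by $K'=(K\setminus\{a\})\cup\{b\}$. Along $\{z_a=z_b\}$ the components $Y_K$ and $Y_{K'}$ intersect, so the compatibility condition in Definition~\ref{def-mero} forces $\omega_K|_{z_a=z_b}=\omega_{K'}|_{z_a=z_b}$; the factors $w^-_I(z_K)$ and $w^-_I(z_{K'})$ coincide on the diagonal by the $S_k$-invariance of $w^-_I$; the remaining factors in the two denominators agree on the diagonal; and $\theta(z_a-z_b)$ and $\theta(z_b-z_a)$ differ by a sign by oddness of $\theta$. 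Hence the residues from the two paired terms cancel, $a_I$ is regular along each such diagonal, and consequently defines a genuine multiplication operator of the required degree. Finally, uniqueness is immediate from the triangularity of the weight functions (Lemma~\ref{lemma-5}(ii)): the matrix $\bigl(w^+_I(z_K;z,y,\lambda)\bigr)_{I,K}$ is triangular in the partial order on $k$-subsets with generically nonzero diagonal entries, so $\sum_I a_I\operatorname{Stab}(v_I)=0$ restricted to the fixed-point components can be solved recursively, forcing every $a_I$ to vanish.
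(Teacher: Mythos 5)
Your proposal is correct, but it takes a genuinely different route from the paper. The paper's own proof is a two-line Gaussian elimination: using the triangularity of Lemma~\ref{lemma-5}(ii) ($w^+_I(z_J)=0$ unless $I\leq J$, with an explicit nonzero diagonal value $w^+_I(z_I)$), one runs through the $k$-subsets in a linear refinement of the partial order and at each step subtracts the multiple $\big(\omega|_{Y_I}/w^+_I(z_I)\big)\,c^*w^+_I$ needed to kill the restriction to $Y_I$ without disturbing the components already handled; the coefficient so extracted is automatically a single meromorphic function on $Y_I\cong\hat E_T(\mathrm{pt})$, so no regularity issue on the diagonals ever arises, and uniqueness comes for free from the same triangularity. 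You instead invert the full evaluation matrix at once via the orthogonality relations (Proposition~\ref{p-ortho} and Corollary~\ref{p-ortho2}), obtaining the closed formula $a_I=\sum_K \omega_K\, w^-_I(z_K)\big/\prod_{a\in K,\,b\in\bar K}\theta(z_a-z_b)\theta(z_a-z_b+y)$, and then you must do extra work to show that this expression, whose individual summands have simple poles along the diagonals $z_a=z_b$, is actually regular there; your pairing of $K$ with $K'=(K\setminus\{a\})\cup\{b\}$ and use of the compatibility condition of Definition~\ref{def-mero} together with the oddness of $\theta$ is exactly right, and is in fact the same mechanism the paper uses to construct the inverse map $\varphi^{-1}$ in the proof of Theorem~\ref{th-0} and to handle the diagonal poles of $L_{12}$, $L_{21}$ in Section~\ref{ss-6.2}. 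What each approach buys: the paper's argument is shorter, needs only the triangularity lemma, and sidesteps the residue cancellation entirely; yours produces an explicit formula for the coefficients $a_I$, avoids choosing an ordering, and makes visible why the compatibility condition on intersections is what guarantees the coefficients descend to the base. Your bookkeeping of the line bundle degrees (that each summand is a section of $\mathcal L\otimes\mathcal L_I^{-1}$ independently of $K$) is asserted rather than computed, but it follows from the quadratic-form computation in the proof of Theorem~\ref{th-0}, so this is a matter of detail rather than a gap.
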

\begin{proof} Denote by $Y_I=\iota_I\hat E_T(\mathrm{pt})$ the component labeled by $I$. Suppose that $\omega$ is a meromorphic section vanishing on $Y_J$ for all $J>I$ and such that $\omega|_{Y_I}\neq 0$. By Lemma~\ref{lemma-5}, we can subtract from $\omega$ a multiple of $c ^*w_I^+$ to get a section that vanishes on $Y_J$, $J\geq I$. By induction we may subtract from $\omega$ a suitable linear combination of weight functions to get~$0$.
\end{proof}

\subsection[$S_n$-equivariant admissible difference operators]{$\boldsymbol{S_n}$-equivariant admissible difference operators}
\begin{Definition} An admissible difference operator is called {\em $S_n$-equivariant} if it commutes with the action of the symmetric group on sections.
\end{Definition}

\begin{Lemma}\label{l-equiv} Let $\mathcal L$ be an $S_n$-equivariant line bundle on $\hat E_T(\mathrm{pt})$. An admissible difference operator $\varphi$ of degree $(\mathcal L,\mu,\nu)$ is $S_n$-equivariant if and only if its matrix elements $\varphi_{K,K'}$ obey
 \begin{gather*}
 \sigma^*\varphi_{\sigma(K),\sigma(K')}=\varphi_{K,K'}.
 \end{gather*}
\end{Lemma}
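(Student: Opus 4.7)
The plan is to translate the $S_n$-equivariance $\sigma^*\circ\varphi=\varphi\circ\sigma^*$ into a condition on the matrix elements by pulling back via the inclusions $\iota_K$ and using the defining formula \eqref{e-do}. Before the main computation, I will dispose of the necessary bookkeeping: since $\mathcal L$ is $S_n$-equivariant and the twisting bundles $\mathcal T_{k,n}$, $\mathcal T_{k+\mu/2,n}$ are $S_n$-equivariant by Lemma~\ref{lemma-8}, the admissible bundles $\mathcal M_1$ and $\mathcal M_2$ are $S_n$-equivariant, so $\sigma^*$ acts on meromorphic sections of $\mathcal M_1$ and $\mathcal M_2$. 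The description of the $S_n$-action recalled just above Lemma~\ref{lemma-8} gives $\sigma\circ\iota_K=\iota_{\sigma(K)}\circ\sigma$, whence
\[
\iota_K^*\sigma^* \;=\; \sigma^*\iota_{\sigma(K)}^*, \qquad \sigma^*\iota_{\sigma(K')}^* \;=\; \iota_{K'}^*\sigma^*.
\]
Moreover $\sigma$ acts on $\hat E_T(\mathrm{pt})=E^n\times E\times E$ only through the $E^n$ factor, whereas $\tau_\nu$ shifts $\lambda$ in the last factor; consequently $\sigma^*$ and $\tau_\nu^*$ commute, both on $\hat E_T(\mathrm{pt})$ and on $\hat E_T(X_{k,n})$.

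Next I will carry out the main computation. Applying $\iota_K^*$ to both sides of $\sigma^*\varphi(s)=\varphi(\sigma^* s)$ and inserting \eqref{e-do}, I get on the left
\[
\iota_K^*\sigma^*\varphi(s) \;=\; \sigma^*\iota_{\sigma(K)}^*\varphi(s) \;=\; \sum_J \bigl(\sigma^*\varphi_{\sigma(K),J}\bigr)\,\sigma^*\tau_\nu^*\iota_J^* s,
\]
and on the right
\[
\iota_K^*\varphi(\sigma^* s) \;=\; \sum_{K'} \varphi_{K,K'}\,\tau_\nu^*\iota_{K'}^*\sigma^* s.
\]
Commuting $\sigma^*$ past $\tau_\nu^*$, reindexing the left sum by $J=\sigma(K')$, and using $\sigma^*\iota_{\sigma(K')}^*=\iota_{K'}^*\sigma^*$ turns the left-hand side into $\sum_{K'}\bigl(\sigma^*\varphi_{\sigma(K),\sigma(K')}\bigr)\tau_\nu^*\iota_{K'}^*\sigma^* s$. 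Comparing coefficients term by term yields the stated equivalence, modulo the statement that the expansion \eqref{e-do} uniquely determines the matrix elements.

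This uniqueness is the only delicate point, and it is where I expect the main work to lie. It follows from the observation that at a generic point of $\hat E_T(\mathrm{pt})$ the scheme $\hat E_T(X_{k,n})=(E^{(k)}\times E^{(n-k)})\times_{E^{(n)}}E^n\times E\times E$ splits as the disjoint union of its $\binom{n}{k}$ components $Y_{K'}=\iota_{K'}\hat E_T(\mathrm{pt})$, so the restrictions $\iota_{K'}^* s$ of a meromorphic section $s$ can be prescribed independently as meromorphic functions in a neighborhood of the generic point. Thus, if $\sum_{K'}\psi_{K'}\,\tau_\nu^*\iota_{K'}^* s=0$ for every $s$, then each $\psi_{K'}$ vanishes generically and, being meromorphic, identically. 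This forces $\varphi_{K,K'}=\sigma^*\varphi_{\sigma(K),\sigma(K')}$. The converse direction is obtained by reading the same chain of equalities backwards and using that a meromorphic section of an admissible bundle on $\hat E_T(X_n)$ is determined by its restrictions $\iota_K^* s$ to the components.
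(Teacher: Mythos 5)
The paper states Lemma~\ref{l-equiv} without proof, so there is no argument of the authors to compare against; it is treated as an immediate consequence of the definitions. Your argument is the natural one and is essentially correct: the identities $\iota_K^*\sigma^*=\sigma^*\iota_{\sigma(K)}^*$ (from the commuting triangle above Lemma~\ref{lemma-8}) and $\sigma^*\tau_\nu^*=\tau_\nu^*\sigma^*$ (the two act on different factors of $E^n\times E\times E$) reduce the equivariance $\sigma^*\circ\varphi=\varphi\circ\sigma^*$ to the stated relation on matrix elements, and you correctly isolate the only point that requires an argument, namely that the expansion \eqref{e-do} determines the $\varphi_{K,K'}$ uniquely.

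Two small points could be tightened. First, your justification of uniqueness --- that the restrictions $\iota_{K'}^*s$ ``can be prescribed independently in a neighborhood of the generic point'' --- is looser than necessary: a local prescription on an open subset of $Y_{K'}$ need not extend to a global meromorphic section, so the statement as phrased is not quite what you use. What is actually needed, and true, is that for each $K'_0$ there is a global meromorphic section $s$ of $\mathcal M_1$ in the sense of Definition~\ref{def-mero} with $\iota_{K'}^*s=0$ for $K'\neq K'_0$ and $\iota_{K'_0}^*s\neq 0$: take any nonzero meromorphic section of $\mathcal M_1|_{Y_{K'_0}}$, multiply it by an elliptic function vanishing on the diagonals $Y_{K'_0}\cap Y_{K'}$ (a ratio of theta functions in the $z_a-z_b$ does this without changing the bundle), and extend by zero to the other components. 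Testing $\sum_{K'}\bigl(\sigma^*\varphi_{\sigma(K),\sigma(K')}-\varphi_{K,K'}\bigr)\tau_\nu^*\iota_{K'}^*s=0$ against such sections then kills each coefficient, since a product of a section of a line bundle with a nonzero meromorphic section vanishes only if the first factor does. Second, the opening claim that $\mathcal M_1$ and $\mathcal M_2$ are $S_n$-equivariant does not follow from the equivariance of the degree $\mathcal L$ alone (it would require $\mathcal L_1$ itself to be equivariant); this is immaterial, however, because the paper's map $\sigma^*\colon\Gamma_{\mathrm{mer}}(\hat E_T(X_{k,n}),\mathcal M)\to\Gamma_{\mathrm{mer}}(\hat E_T(X_{k,n}),\sigma^*\mathcal M)$ is defined for every admissible $\mathcal M$, and $\sigma^*\mathcal M_1$ is again admissible, so the computation goes through verbatim. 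With these adjustments your proof is complete.
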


\subsection{Graded algebras, graded modules}
Let $Q$ be a group. Recall that an $Q$-graded algebra $A$ over $\mathbb C$ is a collection $(A_\gamma)_{\gamma\in Q}$ of complex vector spaces with associative linear multiplication maps $A_\gamma\otimes A_{\gamma'}\to\mathcal A_{\gamma\gamma'}$, $a\otimes b\mapsto a\cdot b$. Let~$P$ be a set with a left action of $Q$. A $P$-graded (left) module over $A$ is a collection $(M_p)_{p\in P}$ of complex vector spaces indexed by $P$ together with linear action maps $A_\gamma\otimes M_p\to A_{\gamma\cdot p}$, $a\otimes m\mapsto a\cdot m$, obeying $(a\cdot b)\cdot m=a\cdot(b\cdot m)$. A unital $Q$-graded algebra is an $Q$-graded algebra with an identity element $1\in A_e$ in the component indexed by the identity element $e$ of $Q$. We require $1$ to act as the identity on $P$-graded modules.

\subsection{The grading of admissible difference operators}
Let $Q=\mathrm{Pic}\big(\hat E_T(\mathrm{pt})\big)\ltimes 2\mathbb Z\times \mathbb Z$ be the product of the Picard group of $\hat E_T(\mathrm{pt})\cong E^{n+2}$ by $2\mathbb Z\times \mathbb Z$ with group law
\begin{gather*}
 (\mathcal L,\mu,\nu)(\mathcal L',\mu',\nu') =(\mathcal L\otimes \tau^*_\nu \mathcal L', \mu+\mu',\nu+\nu').
\end{gather*}
\begin{Proposition} The collection $(\mathcal A_n(\mathcal L,\mu,\nu))_{(\mathcal L,\mu,\nu)\in Q}$ with the composition of operators is a~unital $Q$-graded algebra.
\end{Proposition}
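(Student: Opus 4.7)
The plan is a direct bookkeeping argument: unwind Definition~\ref{def-adm1} to show that the composition of two admissible difference operators is again admissible with degree equal to the product in $Q$, and that associativity and the unit come for free.

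First I would take $\varphi_1\in\mathcal A_n(\mathcal L,\mu,\nu)$ and $\varphi_2\in\mathcal A_n(\mathcal L',\mu',\nu')$, fix a starting section $s$ of an admissible bundle $\mathcal M_0=p_T^*\mathcal L_0\otimes \mathcal T_{k,n}$ on $\hat E_T(X_{k,n})$, and apply the defining formula \eqref{e-do} twice. Using $\tau_\nu^*\circ \tau_{\nu'}^*=\tau_{\nu+\nu'}^*$ and the fact that $\tau_\nu^*$ is a bundle map commuting with $\iota_{K''}^*$ and with multiplication of meromorphic sections, one obtains
\begin{gather*}
\iota_K^*(\varphi_1\circ\varphi_2)(s)=\sum_{K'}\biggl[\,\sum_{K''}(\varphi_1)_{K,K''}\cdot\tau_\nu^*(\varphi_2)_{K'',K'}\biggr]\,\tau_{\nu+\nu'}^*\iota_{K'}^*s,
\end{gather*}
so the candidate matrix elements of the composition are $(\varphi_1\circ\varphi_2)_{K,K'}=\sum_{K''}(\varphi_1)_{K,K''}\cdot \tau_\nu^*(\varphi_2)_{K'',K'}$.

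Next I would track line bundles. Write $\mathcal M_1=p_T^*(\mathcal L'\otimes\tau_{\nu'}^*\mathcal L_0)\otimes\mathcal T_{k+\mu',n}$ for the intermediate bundle (output of $\varphi_2$, input of $\varphi_1$) and $\mathcal M_2=p_T^*(\mathcal L\otimes\tau_\nu^*\mathcal L'\otimes\tau_{\nu+\nu'}^*\mathcal L_0)\otimes\mathcal T_{k+\mu+\mu',n}$ for the final output bundle. By Definition~\ref{def-adm1}, $(\varphi_1)_{K,K''}$ is a section of $\iota_K^*\mathcal M_2\otimes\tau_\nu^*\iota_{K''}^*\mathcal M_1^{-1}$ and $(\varphi_2)_{K'',K'}$ is a section of $\iota_{K''}^*\mathcal M_1\otimes\tau_{\nu'}^*\iota_{K'}^*\mathcal M_0^{-1}$. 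In the product $(\varphi_1)_{K,K''}\cdot\tau_\nu^*(\varphi_2)_{K'',K'}$ the factors $\tau_\nu^*\iota_{K''}^*\mathcal M_1^{-1}$ and $\tau_\nu^*\iota_{K''}^*\mathcal M_1$ cancel, so each summand is a section of $\iota_K^*\mathcal M_2\otimes\tau_{\nu+\nu'}^*\iota_{K'}^*\mathcal M_0^{-1}$. This is exactly the bundle prescribed by Definition~\ref{def-adm1} for an operator on $\mathcal M_0$ with output $\mathcal M_2$ of degree $(\mathcal L\otimes\tau_\nu^*\mathcal L',\mu+\mu',\nu+\nu')$, which is the product in $Q$.

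Finally, associativity of composition is immediate from associativity of multiplication of meromorphic sections combined with $\tau_{\nu}^*\tau_{\nu'}^*=\tau_{\nu+\nu'}^*$; the identity operator, with matrix elements $\delta_{K,K'}$ viewed as the canonical section of the trivial line bundle on $\hat E_T(\mathrm{pt})$, lies in $\mathcal A_n(\mathcal O,0,0)$ and acts as the unit. There is no genuine obstacle here; the only thing to be careful about is that the dynamical shift $\tau_\nu^*$ has to be applied to the matrix elements of the right-hand factor \emph{before} multiplying by those of the left-hand factor, and this is precisely what forces the semidirect-product law in the definition of $Q$.
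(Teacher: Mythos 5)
Your proof is correct and follows essentially the same route as the paper: the paper's argument simply tracks the output line bundle through the two compositions to get $\mathcal L_3=\mathcal L\otimes\tau_\nu^*\mathcal L'\otimes\tau_{\nu+\nu'}^*\mathcal L_1$, notes that the $\mu$-components add, and identifies the unit as multiplication by the constant $1$. Your additional explicit computation of the matrix elements $(\varphi_1\circ\varphi_2)_{K,K'}=\sum_{K''}(\varphi_1)_{K,K''}\cdot\tau_\nu^*(\varphi_2)_{K'',K'}$ and the bundle cancellation is a correct elaboration of what the paper leaves implicit.
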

\begin{proof} An admissible difference operator of degree $(\mathcal L',\mu',\nu')$ sends a section of an admissible line bundle $\mathcal M_1=p_T^*\mathcal L_1\otimes\mathcal T_{k,n}$ to a section of $\mathcal M_2=p_T^*\mathcal L_2\otimes\mathcal T_{k+\mu'/2,n}$ with $\mathcal L_2=\mathcal L'\otimes\tau^*_{\nu'}\mathcal L_1$. An operator of degree $(\mathcal L,\mu,\nu)$ sends this section to a section of $p_T^*\mathcal L_3\otimes T_{k+\mu/2+\mu'/2,n}$ with
 \begin{gather*}
 \mathcal L_3=\mathcal L\otimes \tau^*_{\nu}\mathcal L_2 =\mathcal L\otimes \tau_{\nu}^*\mathcal L'\otimes\tau^*_{\nu+\nu'}\mathcal L_1.
 \end{gather*}
It is clear that the $\mu$-components of the degree add, so the composition has degree $(\mathcal L\otimes\tau_\nu^*\mathcal L',\mu+\mu'$, $\nu+\nu')$, as required. The identity element is the multiplication by constant function~1, a~section of the trivial bundle~$\mathcal O$.
\end{proof}

\begin{Remark} There is a slight abuse of notation, since $\mathcal A(\mathcal L,\mu,\nu)$ is defined for a line bundle~$\mathcal L$ and not for its equivalence class. The point is that $\mathcal A(\mathcal L,\mu,\nu)$ for equivalent bundles $\mathcal L$ are canonically isomorphic: if $\varphi$ is an admissible difference operator of degree $(\mathcal L,\mu,\nu)$ and $\psi\colon \mathcal L\to \mathcal L'$ is an isomorphism then $\varphi'=\psi\circ\varphi\circ\psi^{-1}$ is an difference operator of degree~$(\mathcal L',\mu,\nu)$. This establishes the isomorphism
 \begin{gather*}
 \bar\psi\colon \ \mathcal A(\mathcal L,\mu,\nu)\to \mathcal A(\mathcal L',\mu,\nu),
 \end{gather*}
 which we claim is independent of $\psi$. Indeed any two choices of $\psi$ differ by the composition with an automorphism of~$\mathcal L$. Since $\operatorname{Aut}(\mathcal L)=\mathbb C^\times$, $\psi$ and $\psi'$ differ by multiplication by a nonzero scalar which does not affect $\bar \psi$.
\end{Remark}
Let $P$ be the set of pairs $(\mathcal L,\mu)$ with $\mathcal L\in\mathrm{Pic}(\hat E_T(X_n))$ and $\mu\in\mathbb Z$. Then $Q$ acts on $P$ via
\begin{gather*}
 (\mathcal L,\mu,\nu)\cdot(\mathcal L',\mu') = (\mathcal L\otimes\tau_\nu^*\mathcal L',\mu+\mu').
\end{gather*}
Let $\mathcal M=p_T^*\mathcal L_1\otimes \mathcal T_{k,n}$ be an admissible bundle on $X_{k,n}$.

Admissible difference operators map sections of admissible line bundles to sections of admissible line bundles. This is formalized as follows.
\begin{Proposition} The collection of vector spaces $\Gamma_{\mathrm{mer}}\big(\hat E_T(X_{k,n}),p_T^*\mathcal L\otimes T_{k,n}\big)$, labeled by $(\mathcal L,\mu)$, with $\mu=-n+2k$ is a~$P$-graded module over the $Q$-graded unital algebra $\mathcal A_n$ of admissible difference operators.
\end{Proposition}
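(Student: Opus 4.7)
The plan is to verify the three defining conditions of a $P$-graded module: compatibility of the action with the gradings, associativity, and the unit law. All three are essentially bookkeeping consequences of the definitions together with the $Q$-graded algebra structure on $\mathcal A_n$ established in the previous Proposition.

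First I would check grading compatibility. Take a homogeneous section $s\in\Gamma_{\mathrm{mer}}(\hat E_T(X_{k,n}),p_T^*\mathcal L_1\otimes\mathcal T_{k,n})$ of grade $(\mathcal L_1,\mu_1)$ with $\mu_1=-n+2k$, and let $\varphi\in\mathcal A_n(\mathcal L,\mu,\nu)$. By the definition of an admissible difference operator, $\varphi(s)$ lies in $\Gamma_{\mathrm{mer}}(\hat E_T(X_{k+\mu/2,n}),p_T^*(\mathcal L\otimes\tau_\nu^*\mathcal L_1)\otimes\mathcal T_{k+\mu/2,n})$, which carries grade $(\mathcal L\otimes\tau_\nu^*\mathcal L_1,-n+2(k+\mu/2))=(\mathcal L\otimes\tau_\nu^*\mathcal L_1,\mu_1+\mu)$; this is precisely $(\mathcal L,\mu,\nu)\cdot(\mathcal L_1,\mu_1)$ in the $Q$-action on $P$. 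When $k+\mu/2\notin\{0,\dots,n\}$ the target is the zero vector space, consistent with the convention adopted in the earlier Remark.

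Second, associativity $(a\cdot b)\cdot s=a\cdot(b\cdot s)$ is just associativity of the composition of linear operators applied to $s$, while the fact that composition respects the $Q$-grading is exactly what is proved in the previous Proposition; no further computation is required. Third, the unit $1\in\mathcal A_n(\mathcal O,0,0)$ is the multiplication operator by the constant function $1$ and hence acts as the identity endomorphism of every space of meromorphic sections, while $(\mathcal O,0,0)$ is the identity of $Q$ and therefore fixes every $(\mathcal L_1,\mu_1)\in P$ under the action.

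I do not anticipate any real obstacle: the entire argument consists of matching the $\tau_\nu^*$ shift appearing in Definition of an admissible difference operator with the $\tau_\nu^*$ appearing in the group law of $Q$ and in its action on $P$, and these conventions have been arranged to agree. The only point that needs even a moment's thought is the bookkeeping of the shift $\mu/2$ in the cohomological degree, which matches the weight shift $\mu$ because of the relation $\mu_1=-n+2k$.
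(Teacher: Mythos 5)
Your proposal is correct and follows the same route as the paper, whose entire proof is the one-line observation that the statement is an immediate consequence of Definition~\ref{def-adm1}(i); your verification of the grading compatibility, associativity, and the unit law is exactly the bookkeeping the paper leaves implicit.
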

\begin{proof} This is an immediate consequence of the definition, see Definition~\ref{def-adm1}(i).
\end{proof}

\section{Action of the elliptic dynamical quantum group}
In this section we construct an action of the elliptic dynamical quantum group associated with~$\mathfrak{gl}_2$ on the extended equivariant elliptic cohomology $\hat E_T(X_n)$ of the union of cotangent bundles of the
Grassmannians of planes in~$\mathbb C^n$. The action is by $S_n$-equivariant admissible difference operators acting on admissible line bundles on the cohomology scheme. Thus each generator~$L_{ij}(w)$ \mbox{($i,j\in\{1,2\})$} of the elliptic dynamical quantum group acts on sections of any admissible line bundle by an admissible difference operator of some degree $(\mathcal L_{ij}(w),\mu_{ij},\nu_{ij})$ which we give below. We also compute the action on $T$-equivariant elliptic cohomology classes and use the $S_n$-equivariance to show that the action descends to an action on $G$-equivariant classes, with $G=U(n)\times U(1)$.

We construct the action in such a way that at the generic fibre of $\hat E_T(X_{k,n})\to E_T(\mathrm{pt})=E^n\times E$ (i.e., for fixed $z_1,\dots,z_n,y$) the map \eqref{e-stable1} defines a morphism of representations from the tensor product of evaluation representations. In other words, suppose that
\begin{gather*}
 L_{ij}(w)v_I=\sum_K L_{ij}(w,z,y,\lambda)_I^Kv_K,
\end{gather*}
for some meromorphic coefficients $L_{ij}(w,z,y,\lambda)_I^K$. Then we want that
\begin{gather}\label{e-action}
 L_{ij}(w)\operatorname{Stab}(v_I) =\sum_K L_{ij}(w,z,y,\lambda)_I^K\operatorname{Stab}(v_K).
\end{gather}
The matrix coefficients $L_{ij}(w,z,y,\lambda)_I^K$ are certain meromorphic functions of $z,y,\lambda$ with theta function-like transformation properties and can thus be considered as meromorphic sections line bundles on $\hat E_T(\mathrm{pt})$. Therefore each summand on the right-hand side is a meromorphic section of an admissible line bundle. The content of the following theorem is that the sum defines uniquely an admissible difference operator.
\begin{Theorem}\label{th-2}\quad
\begin{enumerate}\itemsep=0pt
\item[$(i)$] The formula \eqref{e-action} uniquely defines admissible difference operators $L_{ij}(w)$, $i,j\in\{1,2\}$, of degree $(\mathcal L_{ij}(w),2(i-j),\epsilon(j))$ with $\epsilon(1)=-1$, $\epsilon(2)=1$, for some $S_n$-equivariant line bundle $\mathcal L_{ij}(w)$ on $\hat E_T(\mathrm{pt})$.
\item[$(ii)$] These operators obey the RLL relations of the elliptic dynamical quantum group in the form
 \begin{gather*}
 \mu_\ell R(w_1\!-w_2,y,\lambda)^{(12)} L(w_1)^{(13)} L(w_2)^{(23)} = L(w_2)^{(23)}L(w_1)^{(13)} \mu_rR(w_1\!-w_2,y,\lambda)^{(12)}.
 \end{gather*}
 Here the coefficients of the quadratic relations are in $\mathcal A^0_n$ and the action of $\mu_\ell$, $\mu_r$ is on each matrix element of~$R$.
\end{enumerate}
\end{Theorem}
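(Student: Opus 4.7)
The plan is to construct $L_{ij}(w)$ explicitly on the stable basis via~\eqref{e-action}, extend by $\mathcal A^0_n$-linearity, and verify the remaining properties. Uniqueness in part~(i) is immediate from Theorem~\ref{th-1}: the vectors $\operatorname{Stab}(v_I)$ freely generate the sections of admissible bundles over $\mathcal A^0_n$, so an admissible difference operator, being $\mathcal A^0_n$-linear up to the dynamical shift $\tau^*_{\epsilon(j)}$, is determined by its restriction to the stable basis.

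For existence, I would first compute the coefficients $L_{ij}(w,z,y,\lambda)_I^K$ from the tensor product formula~\eqref{e-tensor} and Example~\ref{example-2}; a direct bookkeeping using the Appel--Humbert description of Section~\ref{ss-alb1} identifies each product $L_{ij}(w,z,y,\lambda)_I^K\cdot c^*w_K^+$ as a meromorphic section of one and the same admissible bundle $p_T^*\mathcal L_{ij}(w)\otimes\mathcal T_{k+(i-j),n}$, where the dynamical shift $\epsilon(j)$ arises from the rule $L_{ij}(w)f(\lambda)=f(\lambda+\epsilon(j)y)L_{ij}(w)$ recalled in Section~\ref{ss-2.3}. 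To verify Definition~\ref{def-adm1}(ii) I would expand a general section as $s=\sum_I a_I\operatorname{Stab}(v_I)$, compute the pullback $\iota_K^*(L_{ij}(w)s)$, and use the orthogonality relations of Corollary~\ref{p-ortho2} to invert the transition matrix $(c^*w_J^+(z_K))_{J,K}$ and rewrite the result in the form $\sum_{K'}\varphi_{K,K'}\tau^*_{\epsilon(j)}\iota^*_{K'}s$. Holomorphy of the $\varphi_{K,K'}$ will follow from Lemma~\ref{lemma-4} together with the triangularity of Lemma~\ref{lemma-5}. The $S_n$-equivariance of each $L_{ij}(w)$ then follows from Corollary~\ref{cor-1}(i): under a transposition $s_i$ the weight functions transform by $S_i$ and the matrix coefficients transform by the same $R$-matrix via the dynamical Yang--Baxter equation~\eqref{e-YBE}, and the two transformations cancel.

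For part~(ii), the RLL relation on the stable basis reduces, via~\eqref{e-action}, to the RLL relation for the standard $L$-operator on the evaluation tensor product $V(z_1)\otimes\cdots\otimes V(z_n)$, which holds by the very definition~\eqref{e-tensor}. The appearance of $\mu_\ell$ and $\mu_r$ on opposite sides of the identity is forced by the dynamical shifts carried by the $L$-operators adjacent to the $R$-matrix factor on each side; these shifts already appear in~\eqref{e-YBE}. By uniqueness from part~(i), the identity on the stable basis extends to an identity of admissible difference operators on all sections. The main obstacle I expect is the detailed admissibility verification: keeping track of the quadratic forms classifying the relevant line bundles and checking that the expressions $\varphi_{K,K'}$ produced by the inversion have no spurious poles. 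These cancellations should occur precisely because of the vanishing conditions defining $\bar\Theta^+_k$, which are incorporated into the geometry of $\hat E_T(X_{k,n})$ via Theorem~\ref{th-0}.
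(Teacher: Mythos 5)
Your overall strategy---define $L_{ij}(w)$ on the stable basis by \eqref{e-action}, extend using the free generation statement of Theorem~\ref{th-1}, and deduce the RLL relations from those of the tensor product of evaluation representations---is the paper's strategy, and your uniqueness argument and part~(ii) are fine. The genuine gap is in the admissibility verification, specifically in how the apparent poles are killed. Your plan of inverting the transition matrix via Corollary~\ref{p-ortho2} produces matrix elements of the shape \eqref{e-99}, with denominators $\prod_{a\in I,\,b\in\bar I}\theta(z_a-z_b)\theta(z_a-z_b+y)$; to obtain an admissible difference operator in the sense of Definitions~\ref{def-mero} and~\ref{def-adm1} you must show that these apparent poles are spurious, i.e., that the resulting collection of sections restricts to meromorphic sections on all intersections of components. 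You attribute this cancellation to the vanishing conditions defining $\bar\Theta^+_k$ ``incorporated via Theorem~\ref{th-0}'', but that cannot carry the argument as stated: the isomorphism of Theorem~\ref{th-0} degenerates precisely on the divisor $D$, which contains the hypertori $z_a=z_b+y$, so it gives no control over the poles coming from the factors $\theta(z_a-z_b+y)$ in the denominator.

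The paper's actual mechanism is different and you would need to supply it (or an equivalent). For $L_{22}$, $\Delta$, $L_{12}$, $L_{21}$ the paper writes explicit matrix elements (Sections~\ref{ss-6.1} and~\ref{ss-6.2}); only simple poles on the diagonals $z_a=z_b$ occur, and these cancel pairwise because the two singular matrix elements $\varphi_{K,K\cup\{a\}}$ and $\varphi_{K,K\cup\{b\}}$ are exchanged by the transposition of $a$ and $b$ ($S_n$-equivariance, Lemma~\ref{l-equiv}) while the restrictions $\iota_{K\cup\{a\}}^*s$ and $\iota_{K\cup\{b\}}^*s$ agree on the diagonal by Definition~\ref{def-mero}. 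For $L_{11}$, which is the one operator the paper does treat by the orthogonality--inversion formula \eqref{e-99}, the spuriousness of the poles at $z_a=z_b$ and $z_a=z_b+y$ is established only by comparison with a second expression through the quantum determinant \eqref{e-determinant}, namely $L_{11}(w)=\big(\cdots\big)L_{22}(w+y)^{-1}$ built from the already-controlled operators. Without one of these two arguments the admissibility claim in part~(i) is not proved; the rest of your outline (identification of the degrees $(\mathcal L_{ij}(w),2(i-j),\epsilon(j))$, $S_n$-equivariance via Corollary~\ref{cor-1}, and the reduction of the RLL relations to the tensor-product representation) is sound.
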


The proof of this theorem is by explicit description of the action and is parallel to the case of Yangians and affine quantum enveloping algebras, \cite{GRTV,RTV2,RV2}. We give the formulae for the action in Sections~\ref{ss-6.1},~\ref{ss-6.2} and \ref{ss-6.3}. The proof of Theorem~\ref{th-2} is in Section~\ref{ss-6.4}.

By Theorem \ref{th-2}, the generators $L_{ij}(w)$ send meromorphic sections of admissible bundles to meromorphic sections of admissible bundles. The next result gives a more precise control on the poles of coefficients. We give the action on holomorphic sections, i.e., equivariant elliptic cohomology classes, both for the torus $T=U(1)^n\times U(1)$ and the group $G=U(n)\times U(1)$.

\begin{Theorem}\label{th-3}\quad
\begin{enumerate}\itemsep=0pt
\item[$(i)$] Let $D$ be the divisor on $\hat E_T(\mathrm{pt})$ whose components are the hypersurfaces defined by equations $z_a+y=w$, for $1\leq a\leq n$ and $\lambda+yj=0$, for $j=-n\dots,n-1,n$. Then $L_{ij}(w)$ maps $H^{\mathrm{ell}}_T(X_n)_{\mathcal L}$ to $H^{\mathrm{ell}}_T(X_n)_{\tau_{\epsilon(j)}^*\mathcal L\otimes \mathcal L_{ij}(w)(D)}$ for any $\mathcal L\in\mathrm{Pic}\big(\hat E_T(\mathrm{pt})\big)$.
\item[$(ii)$] Let $H^{\ell}_G(X_n)_{\mathcal L}$ be the space of $G$-equivariant elliptic cohomology classes of degree $\mathcal L$ for $G=U(n)\times U(1)$, see Definition~{\rm \ref{def-G-equiv}}. Let $\pi\colon \hat E_T(\mathrm{pt})\to \hat E_G(\mathrm{pt})$ be the canonical projection. View line bundles on $\hat E_G(\mathrm{pt})$ as $S_n$-equivariant line bundles on $\hat E_T(\mathrm{pt})$. Then the opera\-tors~$L_{ij}(w)$ induce well-defined operators from $H^{\mathrm{ell}}_G(X_n)_{\mathcal L}$ to $H^{\mathrm{ell}}_G(X_n)_{\tau_{\epsilon(j)}^*\mathcal L\otimes \mathcal L_{ij}(w)(\pi(D))}$ for each $\mathcal L\in\mathrm{Pic}\big(\hat E_G(\mathrm{pt})\big)$.
\end{enumerate}
\end{Theorem}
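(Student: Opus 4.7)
By Theorem~\ref{th-2}(i) each $L_{ij}(w)$ is an admissible difference operator of degree $(\mathcal L_{ij}(w),2(i-j),\epsilon(j))$, so the target line bundle $\tau_{\epsilon(j)}^*\mathcal L\otimes \mathcal L_{ij}(w)$ is forced. What remains is to bound the poles of the image by $D$. By Definition~\ref{def-adm1} the action at a fixed point reads
\begin{gather*}
\iota_K^*\big(L_{ij}(w)\omega\big)=\sum_{K'}\phi_{K,K'}(w;z,y,\lambda)\,\tau_{\epsilon(j)}^*\iota_{K'}^*\omega,
\end{gather*}
with matrix coefficients $\phi_{K,K'}(w)$ that are meromorphic sections of specific line bundles on $\hat E_T(\mathrm{pt})$ depending only on the operator, not on $\omega$. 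Since holomorphic $\omega$ gives holomorphic $\tau_{\epsilon(j)}^*\iota_{K'}^*\omega$, the whole problem reduces to showing that each $\phi_{K,K'}(w)$ is holomorphic on $\hat E_T(\mathrm{pt})\setminus D$.

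\textbf{Key computation.} To extract $\phi_{K,K'}(w)$ I would apply $L_{ij}(w)$ to the test sections $\operatorname{Stab}(v_I)=c^*w_I^+$ and compare the defining formula \eqref{e-action} with the fixed-point expansion, obtaining, for every $I,K$ with $|I|=k$, $|K|=k+(i-j)$,
\begin{gather*}
\sum_M L_{ij}(w)_I^M\,w_M^+(z_K;z,y,\lambda)\;=\;\sum_{K'}\phi_{K,K'}(w)\,w_I^+(z_{K'};z,y,\lambda+\epsilon(j)y).
\end{gather*}
By Lemma~\ref{lemma-5} the matrix $U_{I,K'}:=w_I^+(z_{K'};z,y,\lambda+\epsilon(j)y)$ is triangular with explicit nonzero diagonal at the generic point, so it can be inverted. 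This presents $\phi_{K,K'}(w)$ as a bilinear combination of entries of $U^{-1}$, the tensor-product matrix elements $L_{ij}(w)_I^M$, and the values $w_M^+(z_K)$. By iterating Example~\ref{example-2}, the denominators of $L_{ij}(w)_I^M$ lie on the loci $w=z_a+y$ and $\lambda=jy$ with $|j|\le n$; the poles of $w_M^+(z_K)$ come from zeros of $\psi_M$ and also lie on $\lambda=jy$ with $|j|\le n$. Both of these are in $D$.

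\textbf{Main obstacle.} The entries of $U^{-1}$ a priori carry extra poles along the discriminant hypersurfaces $z_a-z_b+\epsilon(a,b)y=0$, which are not in $D$. The nontrivial step is to show that these apparent poles cancel against zeros coming from the numerators. The cancellation uses (a) the vanishing condition $w_M^+(\dots,z_a,z_a-y,\dots)=0$, which forces $w_M^+(z_K)$ to vanish along the relevant discriminants whenever the multiset of coordinates of $z_K$ is resonant; and (b) the $S_n$-covariance recorded in Corollary~\ref{cor-1} and Proposition~\ref{prop-3}, which trades a transposition $z_a\leftrightarrow z_b$ for an $R$-matrix whose entries are regular at the loci in question. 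The scheme is parallel to and modeled on the analogous cancellations for Yangians in equivariant cohomology and for $U_q(\widehat{\mathfrak{gl}}_2)$ in equivariant $K$-theory carried out in~\cite{GRTV,RTV,RV2}, adapted to the elliptic setting via the theta-function identities used throughout Section~\ref{s-3}. This is the step that I expect to require the most detailed verification.

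\textbf{Plan for part (ii).} First I would show that every $L_{ij}(w)$ is $S_n$-equivariant. By Lemma~\ref{l-equiv} this is the identity $\sigma^*\phi_{\sigma(K),\sigma(K')}=\phi_{K,K'}$ for each $\sigma\in S_n$, and it follows from the $S_n$-covariance of the weight function construction (Corollary~\ref{cor-1}, Proposition~\ref{prop-3}) together with the corresponding $S_n$-covariance of the tensor-product representation $V(z_1)\otimes\cdots\otimes V(z_n)$ entering the definition of the matrix elements $L_{ij}(w)_I^M$. Since the divisor $D$ is manifestly symmetric in $z_1,\dots,z_n$, it descends to a well-defined divisor $\pi(D)$ on $\hat E_G(\mathrm{pt})$. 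By Definition~\ref{def-G-equiv}, $G$-equivariant cohomology classes are precisely the $S_n$-invariant $T$-equivariant classes; an $S_n$-equivariant admissible difference operator preserves $S_n$-invariant sections, so it induces a well-defined operator on $H^{\mathrm{ell}}_G(X_n)_{\mathcal L}$ with the stated target, finishing the proof.
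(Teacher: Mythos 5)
Your reduction of part (i) to bounding the poles of the matrix coefficients $\phi_{K,K'}$, and your treatment of part (ii) via $S_n$-equivariance (Lemma~\ref{l-equiv}), match the paper. But the core of part (i) is left as a gap. Your plan is to express $\phi_{K,K'}$ uniformly for all four generators by inverting the triangular matrix $U_{I,K'}=w_I^+(z_{K'})$ -- equivalently, by the orthogonality relations of Corollary~\ref{p-ortho2}, which is exactly how the paper arrives at its formula~\eqref{e-99} -- and then to argue that the resulting apparent poles along $z_a-z_b=0$ and $z_a-z_b+y=0$ cancel ``using the vanishing condition and $S_n$-covariance.'' That cancellation is precisely the hard content of the theorem, you explicitly defer it (``the step that I expect to require the most detailed verification''), and the sketch you give does not obviously close it: $S_n$-equivariance kills only the poles on the diagonals $z_a=z_b$ (this is the argument the paper makes after Lemmas~\ref{l-L12} and~\ref{l-L21}), and neither it nor the vanishing condition addresses the poles along $z_a-z_b+y=0$ coming from the denominator of $U^{-1}$. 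Moreover $D$ contains $\lambda=jy$ as a reduced divisor, so you must also show the $\lambda$-poles are \emph{simple}; the tensor-product matrix elements $L_{ij}(w)_I^M$ from~\eqref{e-tensor} are sums of products of $R$-matrix entries and do not a priori have only first-order poles in $\lambda$, and your formula gives no control on this.

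The paper avoids the direct cancellation entirely. For $L_{22}$, $L_{12}$, $L_{21}$ it writes down closed-form expressions for the matrix coefficients (Sections~\ref{ss-6.1}--\ref{ss-6.2}), from which the pole locus $\{w=z_a+y\}\cup\{\lambda=jy\}$ is visible by inspection (Lemmas~\ref{l-L22Delta}, \ref{l-L12}, \ref{l-L21}), the only extra poles being simple ones on $z_a=z_b$ that cancel by equivariance. For $L_{11}$, which has no such closed form, it plays two expressions against each other: formula~\eqref{e-99} (your matrix-inversion formula) controls the $w$-dependence and shows at most simple poles at $z_a=w+y$, while the quantum determinant identity of Section~\ref{ss-6.3} expresses $L_{11}(w)$ through $\Delta(w-y)$, $L_{21}L_{12}$ and $L_{22}(w+y)^{-1}$ and shows that only simple poles at $\lambda=\mu y$, $-n\le\mu\le n$, occur; comparing the two proves the diagonal poles in~\eqref{e-99} are spurious without ever exhibiting the cancellation directly. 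To repair your proposal you should either import these explicit formulas for $L_{22}$, $L_{12}$, $L_{21}$ and the determinant identity~\eqref{e-determinant} for $L_{11}$, or supply a genuine proof of the asserted cancellations, which is substantially more work than the paragraph you devote to it.
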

The proof of this theorem is contained in Section~\ref{ss-6.4}.
\begin{Remark} Let $q\colon\hat E_T(\mathrm{pt})=E_T(\mathrm{pt})\times E\to E_T(\mathrm{pt})$ be the projection onto the first factor. Since the action of the generators $L_{ij}(w)$ is by admissible difference operators it preserves the fiber of $q_*\mathcal H_T^{\mathrm{ell}}(X_n)$ at a generic point of the non-extended $E_T(\mathrm{pt})$. If we realize this fiber as a~certain space of functions of $\lambda$ and tensor with all meromorphic functions of $\lambda$ we get a~representation of the quantum group in the sense of Section~\ref{ss-2.1}. By construction, it is isomorphic to the tensor product of evaluation representations. Thus we can think of the action of the quantum group on equivariant elliptic cohomology classes as a tensor product of evaluation representations with variable evaluation points and deformation parameter.
\end{Remark}

\subsection{Action of the Gelfand--Zetlin subalgebra}\label{ss-6.1}
The Gelfand--Zetlin subalgebra is the commutative subalgebra generated by $L_{22}(w)$ and the determinant $\Delta(w)$. As shown in Section~\ref{ss-3.10} these operators act diagonally in the basis $\xi_I$ of $V(z_1)\otimes\cdots\otimes V(z_n)$. It follows that the vectors
\begin{gather*}
 \hat\xi_I=\sum_{|J|=k}\frac{w^-_J(z_I,z,y,\lambda)}{\prod\limits_{a\in I,\,b\in\bar I}\theta(z_a-z_b+y)} \operatorname{Stab}(v_J),
\end{gather*}
(cf.~\eqref{e-xi}), which by construction are sums of sections of certain admissible line bundles, are eigenvectors of the Gelfand--Zetlin subalgebra. It turns out that they are sections of admissible bundles with support on a single irreducible component of $\hat E_T(X_n)$:
\begin{Proposition} Let $I,K\subset [n]$, $|I|=|K|=k$. The restriction of $\hat\xi_I$ to the component of~$\hat E(X_{k,n})$ labeled by $K$ is
\begin{gather*}
\iota_K^*\hat\xi_I= \begin{cases} \prod\limits_{a\in I,\,b\in\bar I}\theta(z_a-z_b),& $I=K$,\\
0,& otherwise.
\end{cases}
\end{gather*}
\end{Proposition}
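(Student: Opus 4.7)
The plan is to unfold the definitions and then apply the second form of the orthogonality relations (Corollary \ref{p-ortho2}) directly. By the definition of the stable envelope, $\operatorname{Stab}(v_J) = c^{\ast}w_J^+$, and by Proposition \ref{prop-8} the restriction of $c^{\ast}w_J^+$ to the component $Y_K = \iota_K\hat E_T(\mathrm{pt})$ is obtained by substituting $t = z_K$ into the normalized weight function. Thus
\begin{gather*}
\iota_K^{\ast}\operatorname{Stab}(v_J) = w_J^+(z_K, z, y, \lambda).
\end{gather*}

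Substituting this into the definition of $\hat\xi_I$ gives
\begin{gather*}
\iota_K^{\ast}\hat\xi_I = \frac{1}{\prod\limits_{a\in I,\, b\in\bar I}\theta(z_a - z_b + y)}\sum_{|J|=k} w_J^-(z_I, z, y, \lambda)\, w_J^+(z_K, z, y, \lambda).
\end{gather*}
At this point Corollary \ref{p-ortho2} applies to the sum in the numerator, yielding $\prod_{a\in I,\, b\in\bar I}\theta(z_a-z_b)\theta(z_a-z_b+y)$ when $I = K$ and $0$ otherwise. After canceling the factor $\prod_{a\in I,\, b\in\bar I}\theta(z_a-z_b+y)$ with the denominator, one obtains precisely $\prod_{a\in I,\,b\in\bar I}\theta(z_a-z_b)$ when $I=K$ and $0$ when $I\neq K$, as claimed.

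There is essentially no obstacle here: the proposition is a direct corollary of the orthogonality relations for normalized weight functions, repackaged through the dictionary between the standard basis of $(\mathbb C^2)^{\otimes n}$ and holomorphic sections of admissible bundles provided by the stable envelope. The only point worth remarking is that we use the ``right inverse'' form (Corollary \ref{p-ortho2}) rather than the original Proposition \ref{p-ortho}; the dual relations are exactly what is needed because here we fix $I$ and $K$ and sum over the tensor index $J$, while the original form fixes two tensor indices and sums over $K$.
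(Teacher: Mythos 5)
Your proposal is correct and follows exactly the paper's own proof: unfold the definition of $\hat\xi_I$, restrict termwise to $Y_K$ so that $\operatorname{Stab}(v_J)$ becomes $w_J^+(z_K,z,y,\lambda)$, and apply Corollary \ref{p-ortho2} to the resulting sum over $J$, cancelling the factor $\prod_{a\in I,\,b\in\bar I}\theta(z_a-z_b+y)$. Your closing remark correctly identifies why the ``right inverse'' form of the orthogonality relations is the one needed here.
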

\begin{proof} From the definition of $\hat\xi_I$ and $\operatorname{Stab}(v_J)$ we have
\begin{gather*}
\iota_K^*\hat\xi_I =\iota_K^*\bigg(\sum_J \frac{ w_J^-(z_I,z,y,\lambda)}{\prod\limits_{a\in I, \,b\in \bar{I}} \theta(z_a-z_b+y)} w^+_J(t,z,y,\lambda) \bigg) \\
\hphantom{\iota_K^*\hat\xi_I}{} =\frac{1}{\prod\limits_{a\in I,\, b\in \bar{I}}\theta(z_a-z_b+y)} \sum_J w_J^-(z_I,z,y,\lambda) w^+_J(z_K,z,y,\lambda),
\end{gather*}
which, using Corollary \ref{p-ortho2}, proves our statement.
\end{proof}

Thus we can write any section $(s_I)_{I\subset[n]}$ of an admissible bundle as linear combination
\begin{gather*}
\sum \frac{s_I\hat\xi_I}{\prod\limits_{a\in I,\,b\in\bar I}\theta(z_a-z_b)}.\end{gather*}
Since the $\hat\xi_I$ are eigenvectors the action of the Gelfand--Zetlin algebra is given by admissible difference operator with diagonal matrices of coefficients.

The action of the determinant $\Delta(w)$ is easiest to describe: it is given by an admissible difference operator of degree $(\mathcal L_\Delta(w),0,0)$, where $\mathcal L_\Delta(w)=\mathcal L(N_\Delta,v_\Delta(w))$ is the line bundle associated with the data
\begin{gather*}
 N_\Delta(z,y,\lambda)= \sum_{a=1}^n (2z_a+y)y, \qquad v_\Delta(w;z,y,\lambda)=-wy.
\end{gather*}
Since $N_\Delta$ and $v_\Delta$ are symmetric under permutations of the variables $z_i$, the corresponding bundle is naturally $S_n$-equivariant. The determinant acts on sections of any admissible line bundle $\mathcal L_1$ on $\hat E_T(X_{k,n})$ it acts by multiplication by the section
\begin{gather*}
 \prod_{i=1}^n\frac{\theta(w-z_i+y)}{\theta(w-z_i)}
\end{gather*}
of $\mathcal L_\Delta(w)$.

The action of $L_{22}(w)$ is by an operator of degree $(\mathcal L_{22}(w),0,1)$. It is defined on the components by
\begin{gather*}
 \iota^*_KL_{22}(w)s=\prod_{i\in K}\frac{\theta(w-z_i)}{\theta(w-z_i-y)} \tau_1^*\iota_K^*s,
\end{gather*}
and $\mathcal L^k_{22}(w)=\mathcal L\Big(2k\sum\limits_{a=1}^nz_ay,kwy\Big)$, which is an $S_n$-equivariant line bundle.
\begin{Lemma}\label{l-L22Delta} These formulae define $S_n$-equivariant admissible difference operators $\Delta(w)\in\mathcal A(\mathcal L_\Delta(w),0,0)$ and $L_{22}(w)\in\mathcal A(\mathcal L_{22}(w),0,1)$.
\end{Lemma}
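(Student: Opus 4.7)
The plan is to verify each claim by (i) computing the line bundle that has the prescribed multiplication factor as a meromorphic section via the Appell--Humbert data of Remark~\ref{rem5.4}, and (ii) invoking Lemma~\ref{l-equiv} for $S_n$-equivariance.

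For $\Delta(w)$, the operator multiplies sections on every component $Y_K$ by the same function $f(w;z,y,\lambda)=\prod_{i=1}^n\theta(w-z_i+y)/\theta(w-z_i)$. Since $\nu=0$, Definition~\ref{def-adm1} reduces to checking that $f$ is a meromorphic section of $p_T^*\mathcal L_\Delta(w)$ (the $\mathcal T_{k,n}$-contributions cancel out of the target bundle of $\varphi_{K,K}$). Each $\theta$-factor contributes its quadratic and linear forms via Remark~\ref{rem5.4}, and summing over $i$ yields $(N_\Delta,v_\Delta(w))$. The matrix $(\varphi_{K,K'})$ is trivially diagonal, so the operator is admissible of degree $(\mathcal L_\Delta(w),0,0)$. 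Since $f$ and $\mathcal L_\Delta(w)$ are manifestly symmetric in $z_1,\dots,z_n$, the hypothesis of Lemma~\ref{l-equiv} is automatic.

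For $L_{22}(w)$, the argument is similar but $\nu=1$ introduces a twist: by Definition~\ref{def-adm1}(ii), $\varphi_{K,K}$ must be a meromorphic section of $\mathcal L_{22}^k(w)\otimes\iota_K^*\mathcal T_{k,n}\otimes(\iota_K^*\tau_1^*\mathcal T_{k,n})^{-1}$. The key bookkeeping is to evaluate the $\mathcal T_{k,n}$-twist: pulling $N_{k,n}$ from \eqref{e-Nkn} back to $Y_K$ (substituting $t=z_K$, $s=z_{\bar K}$) gives $2\sum_{a\in K}z_a(\lambda+(n-k)y)+\sum_{a\in K,\,b\in\bar K}(z_a-z_b)^2$, and composing further with $\tau_1$ shifts $\lambda\mapsto\lambda+y$, so the difference $\iota_K^*N_{k,n}-\iota_K^*\tau_1^*N_{k,n}=-2y\sum_{a\in K}z_a$. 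Combining this with the line bundle of $\prod_{i\in K}\theta(w-z_i)/\theta(w-z_i-y)$ read off directly from Remark~\ref{rem5.4} yields $\mathcal L_{22}^k(w)$. The off-diagonal matrix elements vanish, so the operator is admissible of degree $(\mathcal L_{22}(w),0,1)$. Finally, since $\varphi_{K,K}$ depends on $K$ only through the unordered set $\{z_i\colon i\in K\}$, the identity $\sigma^*\varphi_{\sigma K,\sigma K}=\varphi_{K,K}$ is automatic for $\sigma\in S_n$, verifying the hypothesis of Lemma~\ref{l-equiv}.

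The only non-routine step is the $\tau_1$-twist computation for $L_{22}(w)$; the rest follows from elementary additivity of the Appell--Humbert data under tensor products of line bundles and the symmetry of the multiplication factors in the $z_i$.
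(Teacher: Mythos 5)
Your approach matches the paper's: both operators are given by diagonal matrices $(\varphi_{K,K'})$, one checks that the diagonal entries are sections of the correct line bundle by Appell--Humbert bookkeeping (your computation of the $\tau_1$-twist of $\iota_K^*\mathcal T_{k,n}$ is the right calculation), and $S_n$-equivariance follows from Lemma~\ref{l-equiv} because $\varphi_{K,K}$ depends on $K$ only through the unordered set $\{z_i\colon i\in K\}$.

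However, there is a gap: you never verify that the output is actually a meromorphic section in the sense of Definition~\ref{def-mero}. Because $\hat E_T(X_{k,n})$ is singular, a meromorphic section is by definition a collection of sections on the components $Y_K$ that (a) restrict to meromorphic sections on all intersections, i.e., whose polar divisors contain no diagonal $\{z_a=z_b\}$, and (b) agree on the overlaps $Y_K\cap Y_{\tilde K}$ with $\tilde K=K\smallsetminus\{a\}\cup\{b\}$, $a\in K$, $b\in\bar K$. A componentwise-defined operator does not automatically produce such a collection, and the paper singles out precisely the gluing condition $\varphi_{K,K}|_{z_a=z_b}=\varphi_{\tilde K,\tilde K}|_{z_a=z_b}$ as the remaining point to check (it is where the proofs for $L_{12}$, $L_{21}$ become genuinely delicate, since there the matrix elements have poles on the diagonals). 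For $\Delta(w)$ and $L_{22}(w)$ both points are easy --- the poles of $\varphi_{K,K}$ sit at $w=z_i$ and $w=z_i+y$, not on diagonals, and the gluing follows from your own observation that $\varphi_{K,K}$ depends only on the multiset $\{z_i\colon i\in K\}$, which is unchanged under replacing $a$ by $b$ once $z_a=z_b$ --- but the check belongs in the proof and is absent from yours.
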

\begin{proof} Both operators are defined by diagonal matrices $(\varphi_{K,K'})$ in the notation of Definition~\ref{def-adm1}. It is straightforward to check that the diagonal matrix elements $\varphi_{K,K}$ are sections of the correct line bundle. The equivariance property of Lemma~\ref{l-equiv} is clearly satisfied. Moreover the divisor of poles does not contain any diagonal $\Delta_I=\big\{z\in E^{n+2}\colon z_i=z_j, \forall\, i,j\in I\big\}$, $I\subset[n]$ so that the difference operator maps meromorphic sections to meromorophic sections. It remains to check that the sections on the different components coincide on their intersections, namely that for every $a\in K$, $b\in\bar K$,
 \begin{gather*}
 \varphi_{K,K}|_{z_a=z_b}=\varphi_{\tilde K,\tilde K}|_{z_a=z_b}, \qquad \tilde K=K\smallsetminus \{a\}\cup\{b\}.
 \end{gather*}
 This can be checked directly but also follows from the equivariance condition for the permutation of $a$ and $b$.
\end{proof}

\subsection[Action of $L_{12}$ and $L_{21}$]{Action of $\boldsymbol{L_{12}}$ and $\boldsymbol{L_{21}}$}\label{ss-6.2}
Let $k=1,\dots,n$ and $K\subset[n]$ with $|K|=k-1$, then
\begin{gather*} \iota^*_K L_{12}(w)s =(-1)^k\theta(y)\sum_{a\in \bar K} \frac {\theta(\lambda+w-z_a+(n-2k+1)y)} {\theta(w-z_a-y)} \\
\hphantom{\iota^*_K L_{12}(w)s =}{} \times \prod_{j\in K} \frac {\theta(w-z_j)} {\theta(w-z_j-y)} \frac {\prod\limits_{j\in K} \theta(z_a-z_j-y) } { \prod\limits_{j\in \bar K\smallsetminus\{a\}} \theta(z_a-z_j) } \iota_{K\cup\{a\}}^*\tau_1^*s.
\end{gather*}
\begin{Lemma}\label{l-L12} $L_{12}(w)$ is an $S_n$-equivariant admissible difference operator of degree $(\mathcal L_{12}(w),\!{-}2,1)$ with
 \begin{gather*}
 \mathcal L^k_{12}(w)=\mathcal L\big({-}(\lambda+(n-2k)y)^2,-w(\lambda+(n-k+1)y)\big).
 \end{gather*}
\end{Lemma}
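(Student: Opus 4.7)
The plan is to mirror the argument of Lemma~\ref{l-L22Delta}. From the explicit formula one reads off that the only nonzero matrix elements $\varphi_{K,K'}$ in the sense of Definition~\ref{def-adm1}(ii) are those with $K' = K \cup \{a\}$ for some $a \in \bar K$, in which case $\varphi_{K,K\cup\{a\}}$ is the summand indexed by $a$. Four things then need to be verified: (i)~$\varphi_{K,K\cup\{a\}}$ is a meromorphic section of $\mathcal L^k_{12}(w) \otimes \iota_K^* \mathcal T_{k-1,n} \otimes \tau_1^* \iota_{K\cup\{a\}}^* \mathcal T_{k,n}^{-1}$ with $\mathcal L^k_{12}(w)$ as stated; (ii)~its polar divisor omits every component of $\hat E_T(\mathrm{pt})$; (iii)~the family $\{\iota_K^* L_{12}(w)s\}_K$ obeys the gluing condition of Definition~\ref{def-mero}; (iv)~the operator is $S_n$-equivariant.

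For (i) I compute the quadratic and linear forms of $\varphi_{K,K\cup\{a\}}$ by summing the rank-one contributions of each theta factor (as in Remark~\ref{rem5.4}), with opposite sign for factors in the denominator. Subtracting the known quadratic and linear forms of $\iota_K^* \mathcal T_{k-1,n}$ and adding those of $\tau_1^* \iota_{K\cup\{a\}}^* \mathcal T_{k,n}$ isolates the quadratic and linear forms of $\mathcal L^k_{12}(w)$. A direct calculation shows that all dependence on $a$, $z_a$, and on the set $K$ cancels, leaving forms involving only $y$, $\lambda$, and the parameter~$w$ that match the stated expression $\mathcal L\bigl(-(\lambda + (n-2k)y)^2, -w(\lambda + (n-k+1)y)\bigr)$. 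Claim~(ii) is immediate: the polar divisor is contained in $\bigcup_{j \in K\cup\{a\}}\{w = z_j + y\} \cup \bigcup_{j \in \bar K\smallsetminus\{a\}}\{z_a = z_j\}$, none of whose irreducible components is an entire $E^n \times E \times E$.

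For (iv) I invoke Lemma~\ref{l-equiv}: it suffices to verify $\sigma^* \varphi_{\sigma K, \sigma K'} = \varphi_{K,K'}$ for every $\sigma \in S_n$. A direct inspection of the formula shows this holds, since applying $\sigma$ to $(z_1,\dots,z_n)$ exactly matches the relabeling of $K$, $\bar K$ and $a$ induced by $\sigma$, factor by theta factor. The line bundle $\mathcal L^k_{12}(w)$ is itself $S_n$-equivariant because its quadratic and linear forms involve only $y$ and $\lambda$, not the $z$'s. The gluing condition~(iii) then follows as in the last paragraph of the proof of Lemma~\ref{l-L22Delta}: when $K_1$ and $K_2 = K_1 \smallsetminus \{c\} \cup \{d\}$ with $c \in K_1$, $d \in \bar K_1$ share the diagonal $\{z_c = z_d\}$, the transposition $\sigma = (c\,d)$ fixes that diagonal; combined with the gluing satisfied by the input section $s$ on its own source components, $S_n$-equivariance matches the sums $\iota_{K_1}^* L_{12}(w)s$ and $\iota_{K_2}^* L_{12}(w)s$ term by term on $\{z_c = z_d\}$, pairing the index $a \in \bar K_1 \smallsetminus \{d\} = \bar K_2 \smallsetminus \{c\}$ with itself and the terms $a = d$, $a' = c$. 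The main obstacle is the quadratic-form bookkeeping of step~(i); the remaining steps are either immediate or symmetry arguments already carried out in the proof of Lemma~\ref{l-L22Delta}.
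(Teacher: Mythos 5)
Your overall strategy --- mirroring the proof of Lemma~\ref{l-L22Delta} --- is the paper's, and your steps (i), (iii) and (iv) are carried out essentially as there. But your step (ii) has a genuine gap, and it is exactly the point the paper singles out as the only new feature of this lemma. Definition~\ref{def-mero} demands more than that the polar divisor of $\iota_K^*L_{12}(w)s$ avoid the whole component $Y_K$: the collection must restrict to meromorphic sections on \emph{all} intersections $Y_{I_1}\cap\cdots\cap Y_{I_s}$. Each individual summand $\varphi_{K,K\cup\{a\}}\,\tau_1^*\iota_{K\cup\{a\}}^*s$ has a simple pole along the diagonal $\{z_a=z_b\}$ for every $b\in\bar K\smallsetminus\{a\}$, and such a diagonal contains, for instance, $\{z_c=z_a=z_b\}$ with $c\in K$, which is a component of the triple intersection $Y_K\cap Y_{(K\smallsetminus\{c\})\cup\{a\}}\cap Y_{(K\smallsetminus\{c\})\cup\{b\}}$. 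If these poles survived in the sum over $a\in\bar K$, then $L_{12}(w)s$ would fail to be a meromorphic section in the sense of Definition~\ref{def-mero}, and $L_{12}(w)$ would not be an admissible difference operator at all. Your step (ii) as stated does not rule this out, and your step (iii) only addresses the agreement of $\iota_{K_1}^*L_{12}(w)s$ and $\iota_{K_2}^*L_{12}(w)s$ on the codimension-one intersections $\{z_c=z_d\}$ with $c\in K_1$, $d\in\bar K_1$, along which no summand has a pole --- a different issue.

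The missing argument, which is the substance of the paper's proof, is that the poles along $\{z_a=z_b\}$ with $a,b\in\bar K$ cancel pairwise in the sum over $a\in\bar K$. In trivializations compatible with the $S_n$-action, the coefficient of the term indexed by $b$ is obtained from that of the term indexed by $a$ by the transposition of $a$ and $b$ (this is your equivariance check (iv)), so their residues along $z_a=z_b$ are opposite, because $\theta(z_a-z_b)$ is odd; meanwhile the sections they multiply, $\tau_1^*\iota_{K\cup\{a\}}^*s$ and $\tau_1^*\iota_{K\cup\{b\}}^*s$, agree on $\{z_a=z_b\}=Y_{K\cup\{a\}}\cap Y_{K\cup\{b\}}$ by the gluing condition satisfied by the input meromorphic section $s$. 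Hence the sum is regular along every such diagonal, and the actual polar divisor of $\iota_K^*L_{12}(w)s$ is contained in $\bigcup_j\{w=z_j+y\}$, which contains no component of any intersection of the $Y_J$'s. With this cancellation added, your proof matches the paper's.
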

Let $k=0,\dots,n-1$ and $K\subset[n]$ with $|K|=k+1$, then
\begin{gather*}
\iota^*_K L_{21}(w)s =(-1)^{n-k} \frac {\theta(y)}{\theta(\lambda)\theta(\lambda-y)} \sum_{a\in K} \frac{\theta(\lambda-w+z_a)} {\theta(w-z_a-y)} \\
\hphantom{\iota^*_K L_{21}(w)s =}{} \times \prod_{j\in K\smallsetminus\{a\}} \frac {\theta(w-z_j)} {\theta(w-z_j-y)} \frac {\prod\limits_{j\in \bar K} \theta(z_j-z_a-y) } {\prod\limits_{j\in K\smallsetminus\{a\}} \theta(z_j-z_a) }\iota_{K\smallsetminus\{a\}}^*\tau_{-1}^*s
\end{gather*}
\begin{Lemma}\label{l-L21} $L_{21}(w)$ is an $S_n$-equivariant admissible difference operator of degree $(\mathcal L_{21}(w),2,\!{-}1)$ with
 \begin{gather*}
 \mathcal L^k_{21}(w)=\mathcal L\left({-}\lambda^2-(n-2k+2)y^2+2y\left(\lambda-\sum_{i=1}^nz_i\right),w(\lambda-(k+1)y)\right).
 \end{gather*}
\end{Lemma}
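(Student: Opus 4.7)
The proof mirrors those of Lemmas \ref{l-L22Delta} and \ref{l-L12}. There are four things to verify: (i) each matrix element $\varphi_{K,K\setminus\{a\}}$, with $a\in K$ and $|K|=k+1$, is a meromorphic section of the canonical line bundle $\iota_K^*\mathcal T_{k+1,n}\otimes\tau_{-1}^*\iota_{K\setminus\{a\}}^*\mathcal T_{k,n}^{-1}\otimes p_T^*\mathcal L_{21}^k(w)$ prescribed by Definition~\ref{def-adm1}; (ii) the collection $(\iota_K^*L_{21}(w)s)_K$ glues to a meromorphic section of the target admissible bundle on $\hat E_T(X_{k+1,n})$; (iii) the divisor of poles of each $\varphi_{K,K\setminus\{a\}}$ contains no full diagonal $\Delta_I$, so that meromorphic sections are sent to meromorphic sections; (iv) the operator is $S_n$-equivariant.

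Items (iii) and (iv) are immediate by inspection. The only potential poles are along the hypertori $\lambda\in\{0,y\}$, $w=z_j+y$ for $j\in K$, and $z_j=z_a$ for $j\in K\setminus\{a\}$, none of which is a full diagonal. The formula is written in a manifestly $S_n$-covariant way as a sum over $a\in K$ of an expression depending symmetrically on $K\setminus\{a\}$ and on $\bar K$, so Lemma~\ref{l-equiv} applies. For item (ii), following the proof of Lemma~\ref{l-L22Delta}, one must check agreement of $\iota_K^*L_{21}(w)s$ and $\iota_{\tilde K}^*L_{21}(w)s$ on the hypertorus $\{z_a=z_b\}$, where $\tilde K=(K\setminus\{a\})\cup\{b\}$ with $a\in K$ and $b\in\bar K$; this follows from (iv) applied to the transposition $(a,b)$, since both the source admissible bundle and the diagonal are $(a,b)$-invariant.

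The substantive computation is (i). Using Remark~\ref{rem5.4}, each theta factor $\theta(r^tx+z_0)$ appearing in the formula contributes $\pm rr^t$ to the quadratic form $N$ of the ambient line bundle and $\pm z_0\,r$ to the linear form $v$, with $x=(z_1,\dots,z_n,y,\lambda)$ and $z_0$ depending on the parameter $w$; the sign is $+$ for numerator factors and $-$ for denominator factors. Summing over the numerator factors $\theta(y)$, $\theta(\lambda-w+z_a)$, $\theta(w-z_j)$ for $j\in K\setminus\{a\}$, and $\theta(z_j-z_a-y)$ for $j\in\bar K$, and subtracting those for the denominator factors $\theta(\lambda)$, $\theta(\lambda-y)$, $\theta(w-z_a-y)$, $\theta(w-z_j-y)$ for $j\in K\setminus\{a\}$, and $\theta(z_j-z_a)$ for $j\in K\setminus\{a\}$, produces the quadratic and linear forms of the line bundle containing $\varphi_{K,K\setminus\{a\}}$. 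To isolate $\mathcal L_{21}^k(w)$ one then subtracts the twist contribution $\iota_K^*N_{k+1,n}-\tau_{-1}^*\iota_{K\setminus\{a\}}^*N_{k,n}$, which is obtained from \eqref{e-Nkn} by specializing $t=z_K$, $s=z_{\bar K}$ in the first term and $t=z_{K\setminus\{a\}}$, $s=z_{\bar K\cup\{a\}}$ in the second and applying the shift $\lambda\mapsto\lambda-y$.

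The hard part is carrying out this bookkeeping so that (a) the $a$-dependent terms (the cross terms of the form $z_a\lambda$, $z_ay$, and $z_az_j$) cancel, which must happen since $\mathcal L_{21}^k(w)$ is one single line bundle appearing in the coefficient of every $a\in K$; and (b) the surviving $a$-independent contributions collapse to precisely the quadratic form $-\lambda^2-(n-2k+2)y^2+2y(\lambda-\sum_{i=1}^n z_i)$ and the linear form $w(\lambda-(k+1)y)$ claimed in the lemma. The $a$-cancellation is a strong internal consistency check on the explicit formula, and after it is verified the remaining terms fall into place; Lemma~\ref{l-L21} then follows.
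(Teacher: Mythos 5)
Your outline of items (i), (ii) and (iv) matches the paper's argument: the paper proves Lemmas~\ref{l-L12} and~\ref{l-L21} ``the same way as Lemma~\ref{l-L22Delta}'', i.e., by checking that each matrix element is a section of the prescribed line bundle, that the restrictions glue, and that the operator is $S_n$-equivariant, with the gluing deduced from equivariance. But your item (iii) contains a genuine gap, and it is exactly the point the paper singles out as ``the only new feature'' of $L_{12}$ and $L_{21}$ compared with the Gelfand--Zetlin generators. The matrix element $\varphi_{K,K\smallsetminus\{a\}}$ has simple poles along the hypertori $z_j=z_a$ with $j\in K\smallsetminus\{a\}$; contrary to your claim, each such hypertorus \emph{is} a diagonal $\Delta_{\{j,a\}}$ in the sense of the proof of Lemma~\ref{l-L22Delta}, and it does threaten Definition~\ref{def-mero}: for instance the polar divisor of the single term indexed by $a$ contains the component $\{z_j=z_a=z_d\}$ (with $d\in\bar K$) of the triple intersection $Y_K\cap Y_{(K\smallsetminus\{j\})\cup\{d\}}\cap Y_{(K\smallsetminus\{a\})\cup\{d\}}$, so that term by itself does not define a meromorphic section in the paper's sense. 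Declaring (iii) ``immediate by inspection'' therefore skips the substantive new step.

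What the paper actually proves is that these poles cancel in the sum over $a\in K$ once one uses that the input $s$ is a genuine meromorphic section. Near $z_a=z_j$ the two potentially singular terms have the form $\frac{g(z,y,\lambda)}{z_j-z_a}\,\iota^*_{K\smallsetminus\{a\}}\tau^*_{-1}s+\frac{g(\sigma z,y,\lambda)}{z_a-z_j}\,\iota^*_{K\smallsetminus\{j\}}\tau^*_{-1}s$, where $\sigma$ is the transposition of $a$ and $j$ relating the two coefficients by equivariance; since $Y_{K\smallsetminus\{a\}}\cap Y_{K\smallsetminus\{j\}}=\{z_a=z_j\}$, the compatibility condition of Definition~\ref{def-mero} satisfied by $s$ gives $\iota^*_{K\smallsetminus\{a\}}s=\iota^*_{K\smallsetminus\{j\}}s$ on that hypertorus, so the residues cancel and $\iota_K^*L_{21}(w)s$ is regular there. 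This cancellation --- an essential use of the hypothesis on the input section, not a property of the coefficients alone --- must be added to your argument. The remaining bookkeeping you describe for the quadratic and linear forms is the right computation and corresponds to the paper's (asserted but not written out) verification of the degree $(\mathcal L_{21}(w),2,-1)$.
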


Lemmas \ref{l-L12} and \ref{l-L21} are proved the same way as Lemma~\ref{l-L22Delta}. The only new feature is the appearance of simple poles on diagonals $z_i=z_j$ and it is thus not a priori clear that these operators map meromorphic sections to meromorphic sections in the sense of Definition~\ref{def-mero}. The point is that when acting on meromorphic sections, these poles cancel by the equivariance conditions. For example let us consider the behaviour of $\varphi=L_{12}(w)s$ in the vicinity of the diagonal $z_a=z_b$. The matrix element $\varphi_{K,K\cup\{a\}}$ has a simple pole there if $b\in \bar K\smallsetminus\{a\}$ and so has the matrix element $\varphi_{K,K'\cup\{b\}}$ which by equivariance is obtained from $\varphi_{K,K\cup\{a\}}$ by the transposition $\sigma_{12}$ of $a$, $b$. In local coordinates and trivializations compatible with the $S_n$-action, by setting $f_j=\iota_{K\cup\{j\}}\tau^*_1s$, $j=a,b$, the potentially singular term in $\iota_K^*L_{12}(w)s$ at $z_a=z_b$ has the form
\begin{gather*}
 \frac {g(z,y,\lambda)}{z_a-z_b}f_a(z,y,\lambda) +\frac{g(s_{ab}z,y,\lambda)}{z_b-z_a}f_b(\sigma_{12}z,y,\lambda).
\end{gather*}
Since $s_{ab}z=z$ and $f_a=f_b$ on the diagonal $z_a=z_b$, the poles cancel. The same argument works for $L_{21}$.

\subsection[Action of $L_{11}$]{Action of $\boldsymbol{L_{11}}$}\label{ss-6.3}
Since $L_{22}(w)$ is an invertible admissible difference operator, the action of $L_{11}(w)$ can be obtain from the action of the Gelfand--Zetlin algebra and the action of $L_{12}$, $L_{21}$ via the formula for the determinant
\begin{gather}\label{e-determinant}
 \Delta(w)=\frac{\mu_\ell(\theta(\lambda))}{\mu_r(\theta(\lambda))} (L_{11}(w+y)L_{22}(w)-L_{21}(w+y)L_{12}(w)).
\end{gather} Here $\theta(\lambda)$ is considered as a section of the bundle $\mathcal L(N,0)$ on $E^2$ with $N(y,\lambda)=\lambda^2$.

\subsection{Proof of Theorems \ref{th-2} and \ref{th-3}}\label{ss-6.4}
Theorem \ref{th-2}(i) for $L_{22}$, $L_{12}$ and $L_{21}$ follows from Lemma~\ref{l-L22Delta}, Lemma~\ref{l-L12} and Lemma~\ref{l-L21}, respectively. The operator $L_{11}$ can be expressed as composition of these admissible difference operators via the determinant and is thus also admissible. Part~(ii) follows by construction. To prove Theorem~\ref{th-3}(i) we need to check that in the matrix elements of $L_{ij}(w)$ only simple poles at $z_i=w+y$ and $\lambda=jy$, $j\in\mathbb Z$ appear. This is clear from the explicit formulae except for~$L_{11}(w)$. To prove it for this operator, we use two formulae for it: one is using the definition and the orthogonality relations, and the other expressing it in terms of the other $L_{ij}$ and the determinant, as in Section~\ref{ss-6.3}.

The first formula gives
\begin{gather}\label{e-99}
 \iota_L^*L_{11}(w)s=\sum_{I,J,K} \frac{w^-_K(z_I)L_{11}(w)_K^Jw^+_J(z_L)} {\prod\limits_{a\in I,\,b\in\bar I}\theta(z_a-z_b) \theta(z_a-z_b+y)} \tau_{-1}^*\iota_I^*s.
\end{gather}
The matrix elements $L_{11}(w)_K^J$ of $L_{11}(w)$ in the tensor basis $v_I$ of $\big(\mathbb C^2\big)^{\otimes n}$ are sums of products of matrix elements of $R$-matrices and have at most simple poles at $z_a=w+y$ and possible poles at $\lambda=jy$, $j\in\mathbb Z$, see~\eqref{e-tensor}. Thus the right-hand side of the~\eqref{e-99} has (among other poles) at most simple poles at $z_a=w+y$. The second formula is in terms of the determinant:
\begin{gather*}
 L_{11}(w)s=\left(\frac{\theta(\lambda)} {\theta(\lambda-y\mu)}\Delta(w-y) +L_{21}(w-y)L_{12}(w)\right)L_{22}(w+y)^{-1}s,
\end{gather*}
for $s\in H_T^{\mathrm{ell}}(X_{k,n})_{\mathcal M}$ with $\mu=-n+2k$. From this formula and the explicit expression of $L_{12}$, $L_{21}$, $L_{22}$ we see that only simple poles at $\lambda=y\mu$, $-n\leq \mu\leq n$ occur and that the remaining apparent poles at $z_a-z_b=0$, $z_a-z_b+y=0$ in \eqref{e-99} are spurious.

Finally Theorem \ref{th-3}(ii) holds since the bundles $\mathcal L_{ij}(w)$ are $S_n$-equivariant (and can thus be viewed as line bundles on the quotient) and the action is given by $S_n$-equivariant difference operators.

\section{Shuffle products and stable envelopes for subgroups}\label{s-7}
The stable envelope of \cite{MO} is a map from the equivariant cohomology of the fixed point set for a torus action on a Nakajima variety to the equivariant cohomology of the variety. The goal of this section is to extend this interpretation of the stable envelope to the elliptic case for cotangent bundles of Grassmannians. In our construction the stable envelope is built out of weight functions, which in turn are obtained from shuffle products of elementary weight functions associated with the one-point spaces $T^*\mathrm{Gr}(0,1)$, $T^*\mathrm{Gr}(1,1)$. Thus the first step is to extend the fiber-by-fiber construction of the shuffle product of Section~\ref{s-3} to a shuffle product defined on sections of the coherent sheaf $\bar\Theta^+_{k,n}$ on $\hat E_T(\mathrm{pt})$. By using the isomorphism (outside the divisor~$D$) of Theorem~\ref{th-0}, we get a~shuffle product on the sections of the sheaf of elliptic cohomology classes $\mathcal H^{\mathrm{ell}}_{T}(X_{k,n})$. The $n$-fold shuffle product of classes in $\mathcal H^{\mathrm{ell}}_{T}(X_{k,1})$, $k=0,1$ is then essentially the stable envelope.

We propose to view shuffle products of factors of an arbitrary number of factors as stable envelopes corresponding to subgroups of~$T$. Their geometric interpretation is that they correspond to maps from the cohomology of the fixed point set for the action of a subgroup of the torus~$T$, cf.~\cite[Section 3.6]{MO}.

The basic case, which as we shall see corresponds to the shuffle product of two factors, is the subgroup $B_m\subset U(1)^n$
\begin{gather*}
 B=B_m=\{(\underbrace {z,\dots,z}_m,1,\dots,1)\in A \colon z\in U(1)\},
\end{gather*}
isomorphic to $U(1)$.

Fixed points for the action of this subgroup on $\mathrm{Gr}(k,n)$ are $k$-planes of the form $V_1\oplus V_2$, with~$V_1$ in the span of the first $m$ coordinate axes and~$V_2$ in the span of the last $n-m$ coordinate axes. Thus the fixed point set decomposes into connected components according to the dimension of~$V_1$. Each of these components is a product of Grassmannians. Similarly, the $B_m$-invariant part of the cotangent space at a fixed point splits as a direct sum of cotangent spaces at the factors and we get an isomorphism
\begin{gather*}
 X_{k,n}^{B_m}\cong \sqcup_{d=0}^k X_{d,m}\times X_{k-d,n-m}.
\end{gather*}
As above we consider the action of $A_n=U(1)^n$ on $X_{k,n}$. Then the embedding $X_{d,m}\times X_{k-d,n-m}\hookrightarrow X_{k,n}$ is $A_m\times A_{n-m}= A_n$-equivariant. The K\"unneth formula~\cite{GKV} predicts that this embedding induces a map
\begin{gather}\label{e-Kunn}
 E_{A_m}(X_{d,m})\times E_{A_{n-m}}(X_{k-d,n-m})\to E_{A_n}(X_{k,n}),
\end{gather}
In the description as a fiber product,
\begin{gather*}
 E_{A_m}(X_{d,m})=E^{(d)}\times E^{(m-d)}\times_{E^{(m)}}E^m,
\end{gather*}
and the map is the obvious one: $((t',s',z'),(t'',s'',z''))\mapsto (p(t',t''),p(s',s''),z',z'')$. Here $t'\in E^{(d)}$, $t''\in E^{(k-d)}$, $p\colon E^{(d)}\times E^{(k-d)}\to E^{(k)}$ is the canonical projection and similarly for the other factors.

As in Section~\ref{ss-Extended}, we consider the extended equivariant elliptic cohomology $\hat E_{T_n}(X_{k,n})=E_{A_n}(X_{k,n})\times E^2$ for the torus $T_n=A_n\times U(1)$ where the additional $U(1)$ factor acts by multiplication on each cotangent space. We then have the corresponding embedding
\begin{gather}\label{e-embedding}
 \hat E_{T_m}(X_{d,m})\times_{E^2}\hat E_{T_{n-m}}(X_{k-d,n-m})\to \hat E_{T_n}(X_{k,n}),
\end{gather}
where the map to $E^2$ is the projection onto the second factor. Both are schemes over $\hat E_{T_n}(\mathrm{pt})=\hat E_{T_m}(\mathrm{pt})\times_{E^2}E_{T_{n-m}}(\mathrm{pt})$.

\begin{Proposition} The shuffle product of Proposition~{\rm \ref{prop-2}} defines a map
\begin{gather*}
*\colon \ \tau^*_{n''-2k''}\bar\Theta^+_{k',n'}\boxtimes\bar\Theta^+_{k'',n''}\to\bar\Theta^+_{k,n}\otimes \mathcal L_{k',k'',n',n''}
\end{gather*}
of sheaves of $\mathcal O_{\hat E_T(\mathrm{pt})}$-modules, where $k=k'+k''$, $n=n'+n''$ and $\mathcal L_{k',k'',n',n''}= \mathcal L\Big(k''y\Big((n'-k')y-2\sum\limits_{a=1}^{n'}z_a\Big),0\Big)\in\mathrm{Pic}\big(\hat E_{T_n}(\mathrm{pt})\big)$.
\end{Proposition}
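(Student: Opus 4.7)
The proposition combines two statements: the fiberwise shuffle product is well-defined and maps $\bar\Theta^+$ to $\bar\Theta^+$, which is the content of Propositions~\ref{prop-0} and~\ref{prop-2} together with Corollary~\ref{cor-3}; and the identification of the line bundle twist $\mathcal L_{k',k'',n',n''}$ that arises when one passes from the fiberwise statement to a morphism of coherent sheaves. The plan is to reduce the sheaf-theoretic claim to an explicit computation of quadratic forms.

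First, I would observe that the defining formula $f * g = \tfrac{1}{k'!k''!}\operatorname{Sym}(f \cdot g \cdot \varphi^+)$ is a universal algebraic expression: the kernel $\varphi^+(t,z,y)$ is a ratio of products of Jacobi theta functions, hence a meromorphic section of an explicit line bundle on $\mathbb{C}^k \times \hat E_T(\mathrm{pt})$ whose apparent poles at $t_j = t_l$ are cancelled by the symmetrization, as already noted in the proof of Proposition~\ref{prop-0}. Multiplication by $\varphi^+$ and symmetrization are both $\mathcal O_{\hat E_T(\mathrm{pt})}$-linear, so the formula promotes the fiberwise map to a morphism of coherent sheaves. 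The vanishing condition is preserved by Corollary~\ref{cor-3}, so the morphism restricts to the $\bar\Theta^+$-subsheaves. The dynamical twist $\tau^*_{n''-2k''}$ on the source encodes the fact that the first tensor factor is a theta function with dynamical parameter $\lambda+(n''-2k'')y$ rather than $\lambda$.

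To identify $\mathcal L_{k',k'',n',n''}$, I would compute the quadratic form of the line bundle of which $f \cdot g \cdot \varphi^+$ is a section on $\mathbb{C}^k \times \hat E_T(\mathrm{pt})$ and compare it with $N^\Theta_{k,n}$ of~\eqref{e-NTheta}. Using the rule that a Jacobi theta function $\theta(\ell(x))$ of an integer linear form contributes quadratic form $\ell(x)^2$, the quadratic form of the source equals
\begin{gather*}
 N^\Theta_{k',n'}(t',z',y,\lambda+(n''-2k'')y) + N^\Theta_{k'',n''}(t'',z'',y,\lambda) + N_{\varphi^+}(t,z,y),
\end{gather*}
where $N_{\varphi^+}$ collects the contributions $(t_j-t_l+y)^2-(t_j-t_l)^2$ for $1\le j\le k'<l\le k$, $(t_l-z_a+y)^2$ for $k'<l\le k$, $1\le a\le n'$, and $(t_j-z_b)^2$ for $1\le j\le k'$, $n'<b\le n$. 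Expanding and regrouping, the $t$-dependent terms should reorganize exactly into $N^\Theta_{k,n}(t,z,y,\lambda)$, and the remaining $t$-independent discrepancy is the quadratic form $k''y((n'-k')y-2\sum_{a=1}^{n'}z_a)$ of $\mathcal L_{k',k'',n',n''}$. After symmetrizing under $S_k$ and passing to the direct image under $E^k \times \hat E_T(\mathrm{pt}) \to \hat E_T(\mathrm{pt})$ (which factors through the quotient $E^{(k)} \times \hat E_T(\mathrm{pt})$), one obtains the stated morphism of sheaves.

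The main obstacle is purely computational: verifying the quadratic-form identity above. Since the space of quadratic forms on $\mathbb{C}^{k+n+2}$ is finite-dimensional, this reduces to matching the finitely many coefficients of the monomials $t_it_j$, $t_iz_a$, $t_iy$, $t_i\lambda$, $z_az_b$, $z_ay$, $z_a\lambda$, $y^2$, $y\lambda$, $\lambda^2$ on both sides. The $t$-dependent coefficients must cancel by the construction of the shuffle product; the cross terms coupling the two blocks $\{z_1,\dots,z_{n'}\}$ and $\{z_{n'+1},\dots,z_n\}$ give precisely the advertised correction once the split $n=n'+n''$, $k=k'+k''$ is substituted into $N^\Theta_{k,n}$.
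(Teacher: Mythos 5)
Your proposal follows essentially the same route as the paper's own proof: identify the quadratic form of the bundle of which $f\cdot g\cdot\varphi^+$ is a section as the sum of $N^\Theta_{k',n'}(t',z',y,\lambda+(n''-2k'')y)$, $N^\Theta_{k'',n''}(t'',z'',y,\lambda)$ and the contributions $(t_j-t_l+y)^2-(t_j-t_l)^2$, $(t_l-z_a+y)^2$, $(t_j-z_b)^2$ from $\varphi^+$, check that this equals $N^\Theta_{k,n}(t,z,y,\lambda)+k''y\big((n'-k')y-2\sum_{a=1}^{n'}z_a\big)$, and invoke Proposition~\ref{prop-2} (equivalently Corollary~\ref{cor-3}) for preservation of the vanishing condition. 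The quadratic-form identity you leave as "purely computational" does check out, so the argument is correct and matches the paper's.
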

\begin{proof} The sheaf $\Theta^+_{k,n}$ is defined by the quadratic form $N^\Theta_{k,n}(t,z,y,\lambda)$, see~\eqref{e-NTheta}. Let us write $t=(t',t'')$, $z=(z',z'')$, with $t'=(t_1,\dots,t_{k'})$, $t''=(t_{k'+1},\dots,t_k)$ and similarly for $z$. Then $\tau^*_{n''-2k''}\bar\Theta^+_{k',n'}\boxtimes\bar\Theta^+_{k'',n''}$ is associated with the quadratic form
 \begin{gather*}
 M(t,z,y,\lambda)=N^\Theta_{k',n'}(t',z',y,\lambda+y(n''-2k'')) +N^\Theta_{k'',n''}(t'',z'',y,\lambda).
 \end{gather*}
 The shuffle product maps a section of this bundle to a section of a bundle associated with the sum of this quadratic form and the quadratic forms of the theta functions in $\varphi^+$, see Proposition~\ref{prop-0}, namely
 \begin{gather*}
 M(t,z,y,\lambda) + \sum_{j=1}^{k'}\sum_{l=k'+1}^k \big((t_j-t_l+y)^2-(t_j-t_l)^2\big)\\
\hphantom{M(t,z,y,\lambda)}{} +\sum_{l=k'+1}^k \sum_{a=1}^{n'}(t_l-z_a+y)^2+\sum_{j=1}^{k'}\sum_{b=n'+1}^n (t_j-z_b)^2.
 \end{gather*}
 It is straightforward to verify that this is equal to
 \begin{gather*}
 N^\Theta_{k,n}(t,z,y,\lambda)+k''y\left((n'-k')y-2\sum_{a=1}^{n'}z_a\right).
 \end{gather*}
 This shows that the shuffle product takes values in $\Theta^+_{k,n}\otimes\mathcal L_{k',k'',n',n''}$. The fact that it actually takes values in the subsheaf defined by the vanishing condition follows from Proposition~\ref{prop-2}.
\end{proof}

\begin{Definition} Let $k=k'+k''$, $n=n'+n''$. The {\em unnormalized stable envelope} associated with the component $X_{k',n'}\times X_{k'',n''}$ of the fixed point set $X_{k,n}^{B_{n'}}$ is the shuffle product map
 \begin{gather*}
 \widetilde{\operatorname{Stab}}\colon \ \tau_{n''-2k''}^*\bar\Theta^+_{k',n'} \boxtimes\bar\Theta^+_{k'',n''} \to \bar\Theta^+_{k,n}\otimes\mathcal L_{k',k'',n',n''}
 \end{gather*}
 of sheaves of $\mathcal O_{\hat E_{T_n}(\mathrm{pt})}$-modules.
\end{Definition}
By using the isomorphism $\bar\Theta_{k,n}\cong\mathcal H_{T_n}^{\mathrm{ell}}(X_{k,n})$ on the complement of the divisor $D$ of Theorem~\ref{th-0}, we obtain a map
\begin{gather*}
 \widetilde{\operatorname{Stab}}\colon \ \tau_{n''-2k''}^*\mathcal H^{\mathrm{ell}}_{T_{n'}}(X_{k',n'}) \boxtimes\mathcal H^{\mathrm{ell}}_{T_{n''}}(X_{k'',n''}) \to\mathcal H^{\mathrm{ell}}_{T_n}(X_{k,n})\otimes \mathcal L_{k',k'',n',n''},
\end{gather*}
on $\hat E_T(\mathrm{pt})\smallsetminus D$.

More generally, we may consider subgroups $B=B_{n_1}\times\cdots \times B_{n_r}\subset U(1)^n$ whose fixed point sets have components $X_{k_1,n_1}\times\cdots \times X_{k_r,n_r}$ and define stable envelopes given by $r$-fold shuffle products and thus by compositions of stable envelopes for two factors.

Two special cases give the stable envelope of Section~\ref{ss-5.5} and the action of the elliptic dynamical quantum group.

In the first case we take $B=U(1)^n$. The fixed points are isolated and labeled by $I\subset[n]$. We think of the fixed point labeled by $I$ as a product $X_{k_1,1}\times\cdots\times X_{k_n,1}$ with $k_i=1$ if $i\in I$ and $k_i=0$ otherwise. The unnormalized stable envelope on the component labeled by $I\subset [n]$ is then
\begin{gather*}
 \widetilde {\operatorname{Stab}}\colon \ \boxtimes_{j=1}^n \tau^*_{w(j,I)}\mathcal H^{\mathrm{ell}}_{T_1}(X_{k_i,1}) \to\mathcal H^{\mathrm{ell}}_{T_1}(X_{k_i,1})\otimes \mathcal M_I,
\end{gather*}
(the factors are ordered from left to right) for some suitable line bundle $\mathcal M_I\in\mathrm{Pic}(\hat E_{T_n}(\mathrm{pt}))$ obtained as tensor product of line bundles $L_{k',k'',n',n''}$. In this case the map is defined everywhere, not just on the complement of $D$, since $\bar\Theta_{k,1}\cong \mathcal H^\mathrm{ell}_{T_1}(X_{k,1})$ on $\hat E_{T_1}(\mathrm{pt})$.

The stable envelope of Section \ref{ss-5.5} is obtained by taking the tensor product with suitable line bundles $\tau^*_{w(j,I)}\mathcal M_{k_j}$ on $\hat E_{T_n}(\mathrm{pt})$ so that $\oplus_{k=0}^1\Gamma(X_{k,1},\mathcal
H^{\mathrm{ell}}_{T_1}(X_{k,1})\otimes \mathcal M_k)$ is identified with $\mathbb C^2$ via the basis $\omega_0^+,\omega_1^+$, passing to global sections and normalizing by dividing by $\psi_I$.

In the second case we reproduce the construction of Section~\ref{ss-3.7} in the global setting. Thus we consider the stable envelope for two factors $X_{d,1}\times X_{k-d,n}\subset X_{k,n+1}$.
We obtain two maps
\begin{gather*}
 \oplus_{d=0}^1\tau^*_{n-2(k-d)} \bar\Theta^+_{d,1}\boxtimes \bar\Theta^+_{k-d,n}\otimes \mathcal L_{d,k-d,1,n}^{-1} \to \bar\Theta^+_{k,n+1} \\
 \qquad{} \oplus_{d=0}^1\tau^*_{1-2d} \bar\Theta^+_{k-d,n} \boxtimes \bar\Theta^+_{d,1}\otimes \mathcal L_{k-d,d,n,1}^{-1} \to \bar\Theta^+_{k,n+1}
\end{gather*}
which are invertible at a generic point. Since $\bar\Theta^+_{k,1}$ is isomorphic to $\mathcal H^{\mathrm{ell}}_{T_1}(X_{k,1})$ we get a~map
\begin{gather*}
 \oplus_{d=0}^1\tau^*_{1-2d} \bar\Theta^+_{k-d,n} \boxtimes\mathcal H^{\mathrm{ell}}_{T_1}(X_{d,1}) \otimes \mathcal L_{k-d,d,n,1}^{-1}\\
\qquad{} \to \oplus_{d=0}^1\tau^*_{n-2(k-d)} \mathcal H^{\mathrm{ell}}_{T_1}(X_{d,1})\boxtimes \bar\Theta^+_{k-d,n}\otimes \mathcal L_{d,k-d,1,n}^{-1},
\end{gather*}
defined on some dense open set. This map contains the information of the action of the elliptic dynamical quantum group on the elliptic cohomology of $\hat E_{T_n}(X_{n})$. The action of the generators is given as explained in Section~\ref{ss-3.7}: one needs as above to take the tensor product with a suitable line bundle to associate elliptic cohomology classes $\omega^+_1,\omega^+_0$ with the standard basis of $\mathbb C^2$. Then we are in the setting of Section~\ref{ss-3.7} and we obtain an action of the elliptic dynamical quantum group which is up to gauge transformation the one described in the previous section.

\appendix
\section{Axiomatic definition of elliptic stable envelopes}\label{sec:appendix}

In this section we give an axiomatic definition of the elliptic stable envelopes in the spirit of Maulik--Okounkov \cite[Section 3.3]{MO}, see also \cite{RTV, RTV2, RV2}.

Recall that $c ^*w^+_I$ is a meromorphic section (with controlled denominators) of an appropriate line bundle over $\hat{E}_T(X_{k,n})$. The scheme $\hat{E}_T(X_{k,n})$ has components $Y_J=\iota_J \hat{E}_T(\mathrm{pt})$, and the restriction of a section to $Y_J$ is the result of substituting the variables $t_i$ by $z_J=(z_i)_{i\in J}$.

A meromorphic section of an admissible line bundle $p_T^*\mathcal L(N,0)\otimes \mathcal T_{k,n}$ restricted to $Y_J$ can be written as a meromorphic function $F\colon \C^{n+2} \to \C$ whose transformation properties with respect to the lattice $\Z^{n+2}+\tau \Z^{n+2}$ are determined by $p_T^*\mathcal L(N,0)\otimes \mathcal T_{k,n}$, see Remark \ref{rem5.4}. Below we will consider special forms of such functions.

\begin{Theorem} \label{thm:axiomatic}For any $I$ the section $c ^*w^+_I$ satisfies the following properties.
\begin{itemize}\itemsep=0pt
\item It is a meromorphic section of an admissible line bundle $p_T^*\mathcal L(N,0)\otimes \mathcal T_{k,n}$.
\item The restriction of $c ^*w^+_I$ to $Y_I$, written as a function $\C^{n+2}\to \C$ with transformation pro\-per\-ties determined by $p_T^*\mathcal L(N,0)\otimes \mathcal T_{k,n}$, is
 \begin{gather*}
 \frac {\prod\limits_{a\in I,\, b\in \bar I}\theta(z_a-z_b+\epsilon(a,b)y)} {\prod\limits_{a\in I}\theta(\lambda-(w(a,I)+1)y)},
 \end{gather*}
where $\epsilon(a,b)$ is defined in Lemma~{\rm \ref{lemma-5}} and $w(a,I)$ is defined in \eqref{eq-w}.
\item The restriction of $c ^*w^+_I$ to any $Y_J$, written as a function $\C^{n+2}\to \C$ with transformation properties determined by $p_T^*\mathcal L(N,0)\otimes \mathcal T_{k,n}$, is of the form
\begin{gather}\label{eqn:horizontal}
\frac{1}{\psi_I} \prod_{a\in J} \prod_{b\in \bar J,\, b<a} \theta(z_a-z_b+y) \cdot F_{I,J},
\end{gather}
where $F_{I,J}$ is a {\em holomorphic} function.
\end{itemize}
Moreover, these three properties uniquely determine $c ^*w^+_I$.
\end{Theorem}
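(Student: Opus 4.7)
The plan is to verify the three stated properties for $c^*w_I^+$ and then deduce uniqueness by a triangularity argument based on the weight function basis. Property 1 is a direct restatement of Proposition \ref{prop-8}, so no new work is needed. Property 2 follows immediately from Lemma \ref{lemma-5}(ii), since substituting $t = z_I$ in the normalized weight function yields precisely the asserted closed-form expression, with the factor $1/\psi_I$ arising from the definition of $w_I^+$.

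Establishing property 3 is the first nontrivial step. My approach is to proceed inductively via the shuffle-product decomposition, using Proposition \ref{prop-ev-*} repeatedly. Each evaluation $\mathrm{ev}_{z_c}$ applied to a shuffle product $f * g$ factors as $\mathrm{ev}_{z_c}(f) * g$ or $f * \mathrm{ev}_{z_c}(g)$ times an explicit product of theta functions of the form $\prod_b \theta(z_c - z_b + y)$ or $\prod_a \theta(z_c - z_a - y)$. Iterating this evaluation over the points $z_{j_s}$ indexed by $J = \{j_1 < \cdots < j_k\}$, and tracking which factors are produced depending on whether each index of $J$ lies above or below the corresponding index of $I$, should exhibit the factor $\prod_{a \in J,\, b \in \bar J,\, b < a} \theta(z_a - z_b + y)$ uniformly. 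The residual holomorphic function $F_{I,J}$ is then regular at the apparent poles $z_a = z_b$ by Lemma \ref{lemma-4}.

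For uniqueness, I would set $\delta = s - c^*w_I^+$ for any section $s$ satisfying the three axioms and invoke Theorem \ref{th-1} to write $\delta = \sum_{J'} a_{J'} \cdot c^*w_{J'}^+$ with $a_{J'} \in \mathcal A^0_n$. Property 2 forces $\delta|_{Y_I} = 0$; combined with the triangularity of Lemma \ref{lemma-5}(ii), one cascades through the partial order on $k$-subsets starting from $I$, showing inductively that each $a_{J'}$ must vanish. The main obstacle will be the step establishing property 3: the symmetrization in the explicit formula of Section \ref{ss-expli} mixes many terms, and the factored form \eqref{eqn:horizontal} is not visible from any single summand but only after the whole sum is reorganized. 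A careful shuffle-product induction, together with the compatibility condition on intersections $Y_J \cap Y_{J'}$ required by Definition \ref{def-mero}, is the delicate technical ingredient; the uniqueness step, once the axioms are granted, is comparatively formal.
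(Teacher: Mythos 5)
Your handling of the first two properties matches the paper (Proposition~\ref{prop-8} and Lemma~\ref{lemma-5}(ii)). For the third property, however, the paper does not run a shuffle/evaluation induction: it simply inspects the explicit formula of Section~\ref{ss-expli} and observes that every non-vanishing term of the symmetrization, after the substitution $t=z_J$, already carries the factor $\prod_{a\in J}\prod_{b\in\bar J,\,b<a}\theta(z_a-z_b+y)$ (non-vanishing forces each assignment $t_r\mapsto z_j$, $j\in J$, to satisfy $j\le i_r$, so every $b<j\le i_r$ contributes the factor $l^+_I(r,b)=\theta(z_j-z_b+y)$); so, contrary to what you assert, the factored form \emph{is} visible term by term. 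Your proposed route through Proposition~\ref{prop-ev-*} also has a technical obstruction: in the $\Theta^+$ case that proposition only factors $\mathrm{ev}_{z_c-y}$ for $c$ in the first block and $\mathrm{ev}_{z_c}$ for $c$ in the second block. The evaluation $\mathrm{ev}_{z_c}$ for $c$ in an earlier block does \emph{not} split as $\mathrm{ev}_{z_c}(f)*g$ times a theta prefactor, because neither $\theta(t_l-z_c+y)$ nor $\theta(t_j-z_b)$ vanishes there and both kinds of shuffles survive; so the induction you sketch does not go through as stated, although the conclusion is reachable by the direct inspection above.

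The genuine gap is in uniqueness, which you dismiss as ``comparatively formal'' but which is the hardest part of the paper's proof. Setting $\kappa_I=s-c^*w_I^+$, the axioms give you only (a) $\kappa_I|_{Y_I}=0$ and (b) a \emph{divisibility} statement for $\kappa_I|_{Y_J}$ on every other component --- not vanishing there. A triangularity cascade of the kind you describe would need $\kappa_I|_{Y_J}=0$ on further components to kill the coefficients $a_{J'}$ one by one; the single relation $0=\sum_{J'\le I}a_{J'}\,c^*w^+_{J'}|_{Y_I}$ coming from $Y_I$ determines none of them, and for $J'\not\le I$ it gives no information at all. Indeed, solving $\kappa_I|_{Y_J}=a_J\,c^*w_J^+|_{Y_J}$ using property~3 alone yields a possibly non-zero meromorphic $a_J$. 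The paper closes the argument with two ingredients absent from your proposal. First, choosing $J$ \emph{maximal} (for a total order refining $\le$) with $\kappa_I|_{Y_J}\ne0$ and using the gluing condition of Definition~\ref{def-mero} on the diagonals $Y_J\cap Y_{J'}$, it upgrades the divisibility to include the extra factor $\prod_{a\in J}\prod_{b\in\bar J,\,b>a}\theta(z_b-z_a)$, i.e., the full numerator of $c^*w_J^+|_{Y_J}$. Second --- and this is the crux --- after factoring out $c^*w_J^+|_{Y_J}$ one compares the quadratic forms $N_I$ and $N_J$: for $a\in J\cap\bar I$ the remaining function is holomorphic in $z_a$, $1$-periodic, and quasi-periodic under $z_a\mapsto z_a+\tau$ with multiplier ${\rm e}^{-2\pi{\rm i}(\lambda+sy)}$, and a Fourier expansion shows any such function vanishes for generic $(y,\lambda)$. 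Without this transformation-property argument uniqueness does not follow from triangularity; this is the missing idea.
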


\begin{Remark} From the second property one can calculate the quadratic form
\begin{gather*} 
 N_I =-2\sum_{a\in\bar I}n(a,I)z_ay-2\sum_{a\in I}z_a(\lambda+n(a,\bar I)y)\\
\hphantom{N_I =}{} +(k(n-k)-\sum_{a\in I}n(a,\bar I))y^2-\sum_{a\in I}(\lambda- (n(a,I)+1)y+n(a,\bar I)y)^2,
\end{gather*}
cf.~\eqref{e-NI}.
\end{Remark}

\begin{Remark}Lemma \ref{lemma-5} (ii) implies the triangularity property
\begin{itemize}\itemsep=0pt
\item the restriction of $c ^*w^+_I$ to $Y_J$ is 0 unless $J\leq I$.
\end{itemize}
According to Theorem \ref{thm:axiomatic} this property is a consequence of the three properties listed.
\end{Remark}

\begin{Remark}The third listed property is a local version of a support condition used in the axiomatic description of cohomological stable envelopes in \cite[Theorem 3.3.4(i)]{MO}; see also the corresponding axiom in K-theory in \cite[Theorem~3.1(I)]{RTV2}.
\end{Remark}

\begin{proof}The first two properties of $c ^*w^+_I$ are claimed in Proposition~\ref{prop-8} and Lem\-ma~\ref{lemma-5}(ii). Inspecting the explicit formula for $w^+_I$ in Section \ref{ss-expli} one finds that, after substitution $t_i$ by $z_a$, $a\in J$, all non-zero terms are of the form~(\ref{eqn:horizontal}), which proves the third property.

Now we prove that the three properties uniquely determine $c ^*w_I^+$. Let a section satisfy the three listed properties, and let $\kappa_I$ be the difference of that section and $c ^*w_I^+$.
Assume that $\kappa_I$ is not~0. Then there exists a $J$ such that $\kappa_I$ restricted to~$Y_J$ is not~0. For a total ordering $\prec$ refining the partial order $<$ on the cardinality $k$ subsets of $[n]$ let us choose $J$ to be the largest with the property $\kappa_I|_{Y_J}\not=0$. We have $J\not=I$ because of the second property.

We claim that $\kappa_I|_{Y_J}$, written as a function $\C^{n+2}\to \C$ with transformation properties determined by $p_T^*\mathcal L(N_I,0)\otimes \mathcal T_{k,n}$, is of the form
\begin{gather}\label{eqn:div1}
\frac{1}{\psi_I} \prod_{a\in J} \prod_{b\in \bar J,\, b<a} \theta(z_a-z_b+y) \cdot \prod_{a\in J} \prod_{b\in \bar J,\,b>a} \theta(z_b-z_a)\cdot F_1,
\end{gather}
where $F_1$ is holomorphic. The fact that this function can be written in the form
\begin{gather}\label{eqn:div2}
\frac{1}{\psi_I} \prod_{a\in J}\prod_{b\in\bar J,\,b<a} \theta(z_a-z_b+y) \cdot F_2,
\end{gather}
with $F_2$ holomorphic, is explicit from the third property. We need to prove that $F_2$ is the product of $\prod\limits_{a\in J}\prod\limits_{b\in\bar J,\, b>a} \theta(z_b-z_a)$ and a holomorphic function. Let $a\in J$, $b\in \bar{J}$ and $b>a$. Denote $J'=(J-\{a\})\cup \{b\}$. Observe that $J<J'$ and hence $J\prec J'$. From the choice of $J$ therefore it follows that $\kappa_I$ restricted to $Y_{J'}$ is 0. The diagonal $\Delta_{J,J'}=\{z_a=z_b\}$ is included both in~$Y_J$ and $Y_{J'}$, hence we obtain that the substitution of $z_a=z_b$ into $\kappa_I|_{Y_J}$ vanishes. It follows that the function $F_2$ vanishes on the hyperplane $z_a=z_b$ and its translates by the lattice $\Z^{n+2}+\tau\Z^{n+2}$. The zeros of $\theta(z_b-z_a)$ are exactly these hyperplanes and are of first order, therefore $F_2$ can be written as a product of $\theta(z_b-z_a)$ times a holomorphic function. Iterating this argument for all $(a,b)$ with $a\in J$, $b\in \bar{J}$, $b>a$ we obtain that~(\ref{eqn:div2}) is in fact of the form~(\ref{eqn:div1}), what we claimed.

Observe that the product of theta functions in~(\ref{eqn:div1}) is the numerator of $c ^*w_J^+|_{Y_J}$. Hence we obtain that (\ref{eqn:div1}) further equals
\begin{gather*}
\frac{\prod\limits_{a\in J} \theta(\lambda-(w(a,J)+1)y)}{\psi_I} \cdot c ^*w_J^+|_{Y_J} \cdot F_1.
\end{gather*}
Since the transformation properties of $\kappa_I|_{Y_J}$ are determined by $p_T^*\mathcal L(N_I,0)\otimes \mathcal T_{k,n}$, and those of $c ^* w_J^+|_{Y_J}$ are determined by $p_T^*\mathcal L(N_J,0)\otimes \mathcal T_{k,n}$, we have that the transformation properties of
\begin{gather}\label{eqn:div3}
\frac{\prod\limits_{a\in J} \theta(\lambda-(w(a,J)+1)y)}{\psi_I} \cdot F_1
\end{gather}
are determined by $p_T^*\mathcal L(N_I-N_J,0)$~-- the factor $\mathcal T_{k,n}$ canceled.

Let $a\in J \cap \bar{I}$, and consider (\ref{eqn:div3}) as a function of $z_a$, let us call it $f(z_a)$. Since the first factor (the fraction) only depends on $\lambda$ and $y$, $f$ is a {\em holomorphic} function of $z_a$ for generic $y,\lambda$. Comparing the $z_a$ dependence of $N_I$ and $N_J$ we obtain that
\begin{gather}\label{eqn:qq}
f(z_a+\tau)={\rm e}^{-2\pi{\rm i} (\lambda+sy)} f(z_a), \qquad f(z_a+1)=f(z_a),
\end{gather}
for some integer $s$. Using the 1-periodicity, we expand
\begin{gather*}
f(z_a)=\sum_{m\in \mathbb Z} a_m {\rm e}^{2\pi{\rm i} m z_a},
\end{gather*}
and using the first transformation property of (\ref{eqn:qq}) we obtain
\begin{gather*}
\sum_m a_m {\rm e}^{2\pi{\rm i} m z_a} \big( {\rm e}^{2\pi{\rm i} m \tau} - {\rm e}^{-2\pi{\rm i} (\lambda+sy)} \big)=0,
\end{gather*}
implying $a_m=0$ for all $m\in \mathbb Z$. We obtained $F_1=0$, and in turn, $\kappa_I|_{Y_J}=0$. This is a~contradiction proving that $\kappa_I$ is 0 on all $Y_J$.
\end{proof}

\subsection*{Acknowledgments} We thank Nora Ganter and Mikhail Kapranov for explanations on equivariant elliptic co\-ho\-mo\-logy. G.F.\ was supported in part by the National Centre of Competence in Research ``SwissMAP~-- The Mathematics of Physics'' of the Swiss National Science Foundation. R.R.\ was supported by the Simons Foundation grant~\#523882. A.V.\ was supported in part by NSF grant DMS-1362924 and Simons Foundation grant~\#336826. We thank the Forschungsinstitut f\"ur Mathematik at ETH Z\"urich and the Max Planck Institut f\"ur Mathematik, Bonn, where part of this work was done, for hospitality.

\pdfbookmark[1]{References}{ref}
\LastPageEnding

\end{document}